\documentclass[a4paper,11pt]{article}
\usepackage[utf8]{inputenc}
\usepackage[margin=2.5cm]{geometry}
\usepackage{amssymb}
\usepackage{latexsym}
\usepackage{amsfonts}
\usepackage{amsthm}
\usepackage{amsmath}
\usepackage{tikz}

\usepackage[hidelinks]{hyperref}
\usetikzlibrary{arrows.meta,
                chains,
                positioning,
                shapes.geometric
                }
\usepackage{tcolorbox}
\usepackage{float}
\usepackage{bbm}
\usepackage{enumerate}
\usetikzlibrary{arrows.meta}
\usepackage[normalem]{ulem}
\usepackage{mathtools}
\usepackage{authblk}

\usepackage{algorithm}
\usepackage{algorithmicx}
\usepackage{algpseudocode}

\usepackage{wrapfig,lipsum}
\usepackage{caption}
\usepackage{subcaption}
\usepackage[shortlabels]{enumitem}
\usepackage{tasks}
\newtheorem{theorem}{Theorem}[section]
\newtheorem{lemma}[theorem]{Lemma}
\newtheorem{prop}[theorem]{Proposition} 
\newtheorem{cor}[theorem]{Corollary}
  
\theoremstyle{definition}
\newtheorem{defn}[theorem]{Definition}

\newcommand{\Rel}{\textrm{Rel}} 
\newcommand{\Ord}{\textrm{Ord}} 
\newcommand{\Ess}{\textrm{Ess}} 
\newcommand{\DC}{\text{DC}} 
\newcommand{\ext}{\text{ext}}  
\newcommand{\birth}{\mathfrak{b}}  
\newcommand{\death}{\mathfrak{d}}  
\newcommand{\R}{\mathbb{R}}
\newcommand{\im}{\text{im}}
\newcommand{\Z}{\mathbb{Z}}

\newcommand{\argmax}{\text{argmax}}
\newcommand{\Cascade}{\text{Cascade}}

\title{Computing extended persistent homology of radial distance filtrations of {E}uclidean shapes} 
\author[1]{Yuchen Jency Jiang} 
\author[2]{Vanessa Robins}
\author[3]{Katharine Turner}
\affil[1]{Division of Applied Mathematics, Brown University}
\affil[2]{Research School of Physics, Australian National University}
\affil[3]{Mathematical Sciences Institute, Australian National University}
\date{\today}

\begin{document}

\maketitle

\begin{abstract}
   We study the extended persistent homology of the radial filtration of a shape $M\subseteq \mathbb{R}^n$. A radial filtration is formed by choosing a center point $c$ and taking points of $M$ within distance $r$ of $c$. We show that, under mild assumptions, we can recover the \emph{radial extended persistence} of a manifold with boundary from the radial persistence of its boundary. We also establish algorithms to compute radial extended persistence when $M$ is a set of pixels in a 2D digital grid. The methods are similar to those used to compute the extended persistent homology of a height filtration of a shape embedded in Euclidean space. We envisage these results will be useful in biomedical image analysis settings where it is natural to consider a radial filtration with respect to a fixed center, for example in the study of neuronal structures.
\end{abstract}

\section{Introduction}

Topological data analysis (TDA) provides methods to rigorously quantify the shape of data and to measure distances between two shape summaries. 
These methods are grounded in algebraic topology and can be adapted to a wide range of contexts and applications. 
In this work, we focus on a context relevant to images of biological specimens such as leaf shapes, bone anatomy or biometrics and will measure these specimens using filtrations of \emph{radial distance functions}. 

Mathematically, each specimen $M$ is an $n$-dimensional subset of $\R^n$ and its shape is quantified using the extended persistent homology of a filtration of $M$. 
We use extended persistence to overcome one of the drawbacks of `regular' persistent homology: that the distance between two shapes is infinite unless they have the same Betti numbers (e.g., the same number of components or holes). Extended persistence was introduced in~\cite{dualityMore} to address this problem. 

The simplest possible filtration of a shape is one built from sublevel sets of a height function. 
In~\cite{KT2022}, it was shown that when $M$ is an $n$-manifold with boundary in $\R^n$, the extended persistent homology of a height function on $M$ can be efficiently recovered from its restriction to the boundary of $M$.
The current paper establishes the same result for a different class of functions: radial distance from a point. 
Specifically, given a point $c \in \R^n$, the radial function on $M$ is $\rho_c(x) = \|x-c\|_2$, the Euclidean distance between $x \in M$  and $c$. 

For image data where a radial filtration is desirable, there is typically a natural choice of center point that can be automatically determined (such as the nucleus of a cell or the centroid of a region). 
A radial distance function then has the property that it is insensitive to orientation of the image. 
This is desirable when comparing many images, as otherwise an extra step of image alignment is required. 
Radial filtrations have found applications in biomedical diagnosis \cite{A2020,D2024,J2025}, dynamic network analysis \cite{S2022}, as well as in machine learning tasks such as image classification, segmentation and reconstruction \cite{C2022,G2019,N2024,T2021}. Note that those implementations all used a persistent homology pipeline instead of the extended persistence counterpart studied here. 

The paper proceeds as follows. In Section~\ref{sec:bg} we establish the definitions and notation for radial extended persistence modules. The technical results rely on Morse theory for smooth and piecewise-linear manifolds with boundary, and the required material is covered in Section~\ref{sec:morseTheory}. 
The difference between the previous results for height functions~\cite{KT2022}, and the current analogous ones for radial distance functions is that the relationship between the persistence modules for $M$ and $\partial M$ depends on the location of the center point $c$ with respect to $M$. The different cases are explained and explored in full through a series of lemmas and theorems in Section~\ref{sec:proof}. 
We show how to adapt these results to establish algorithms for binary images in Section~\ref{application}. 
As in~\cite{KT2022}, the correctness of the algorithms relies on a specific method for constructing the boundary curve around the foreground object, defined in~\ref{consider}. The discrete uniform grid of a digital image also means the numerical version of a radial distance function can easily fail to satisfy the assumptions of the theorems in Section~\ref{sec:proof}. We explain the issues and workarounds in Sections~\ref{signs} and~\ref{practical}.
Pseudocode for algorithms that cover all cases for the location of $c$ is provided in the Appendix (Section \ref{code}) and an implementation in R is available for the simpler cases.

\section{The radial extended persistent homology module}
\label{sec:bg}

We assume that readers are familiar with persistence modules, their interval decompositions and relative homology. 
While we briefly describe the construction of an extended persistence module here, we recommend those unfamiliar with extended persistence to first read the substantial exposition in \cite{KT2022}. We use the same notation as in that paper.  

Intuitively, we think of extended persistent homology as first scanning upwards (of some sublevel set filtration) gradually revealing more information, then sweeping back down using relative homology until we get back to the starting point.  Since the beginning and the end of the sequence are both trivial, all components that are born along the way will eventually die. 
To make it clear that extended persistence is a natural extension of regular persistence, we define persistence modules over a general parameter space, namely a \emph{totally ordered set} \cite{KT2022}. 
For example, in regular persistent homology the parameter space is typically chosen to be a subset of $\R$ endowed with the usual order.  
Our extended persistence modules are equipped with an ``extended" parameter space $\Theta$ consisting of two copies of $\R$ as follows. Let $\mathcal{O} = \{(t,\text{Ord}): t\in\R\}$ and $\mathcal{R}=\{(t,\text{Rel}): t\in\R\}$. Then $\Theta=\mathcal{O}\cup \mathcal{R}$ and equipped with the total order:
\begin{itemize}
    \item $(s,\text{Ord}) \leq (t,\text{Ord})$ if $s\leq t$;
    \item $(t,\text{Rel}) \leq (s,\text{Rel})$ if $s\leq t$;
    \item $(s,\text{Ord})<(t,\text{Rel})$ for all $s,t\in\R$.
\end{itemize}

Given a finite geometric simplicial complex $M$ embedded in $\R^n$ and a smooth function $f: M \to \R$, the topological space associated with $(t,\text{Ord})$ is  the sublevel set $M_t = f^{-1}(-\infty,t]$, while the space associated with $(t,\text{Rel})$ is the superlevel set $M^t = f^{-1}[t,\infty)$. 
Ordinary persistent homology maps the sequence of sublevel sets to vector spaces $H_k(M_t)$ (homology groups with $\Z_2$ coefficients). 
Upgrading this to extended persistence, 
to each parameter in $\Theta$ we attach a vector space $V_{(t,\text{Ord})} = H_k(M_t)= H_k(M_t, \emptyset)$ and $V_{(t,\text{Rel})} = H_k(M,M^t)$, the relative homology of a pair. 
The maps between the vector spaces $V_\theta$ are those induced by inclusion maps between corresponding pairs of spaces and the interval decomposition theorem for persistence modules applies~\cite{crawley2015decomposition}.
Intervals that have both endpoints in $\mathcal{O}$ are \emph{ordinary classes} and those with both endpoints in $\mathcal{R}$ are \emph{relative homology classes}. 
The \emph{essential homology classes} of $M$ are those with a birth at $(s,\text{Ord}) \in \mathcal{O}$, and these now have a death at some $(t,\text{Rel}) \in \mathcal{R}$, with $s,t \in \R$.  
We further classify the essential classes into two subcategories, $\text{Ess}^+$ and $\text{Ess}^-$, where the former has $s<t$ and the latter satisfies $t<s$. Then we have that the degree-$k$ extended persistence module $\mathrm{XPH}_k(M,f)$ induced by $f$ decomposes as
\begin{equation*}
\mathrm{XPH}_k(M,f) = \text{Ord}_k(M,f)\oplus \text{Rel}_k(M,f)\oplus \text{Ess}_k^+(M,f)\oplus \text{Ess}_k^-(M,f), 
\end{equation*}
where each summand is a submodule over a copy of $\R$ corresponding to the given class \cite{KT2022}.
Note that the essential classes are also referred to as extended classes in earlier literature. 

\bigskip 
Fix a centre $c\in \R^n$, we define the \emph{radial distance function} centred at $c$:  
\begin{align*}\rho_c: M&\to \R_{\ge 0} \\
x&\mapsto \lVert x-c\rVert_2 = \sqrt{(x-c)\cdot (x-c)},
\end{align*}
where $\cdot$ is the inner product inherited from $\R^n$. 
We use $\rho_c$ to define an extended filtration of $M$, using the sub- and superlevel sets at threshold $r$: 
\begin{align*} 
    M_r &\coloneqq \left\{x\in M:\rho_c(x) \leq r\right\} \\ 
    M^r  &\coloneqq \left\{x\in M:\rho_c(x) \geq r\right\}   
\end{align*}
The \emph{degree-$k$ extended radial persistence module} is then defined to be 
$\text{XRPH}_k(M, c)\coloneqq \text{XPH}_k(M, \rho_c)$, and the full 
\emph{extended radial persistent homology (XRPH) of $M$ with centre $c$} as 
$$\mathrm{XRPH}(M, c) = \big(\mathrm{XRPH}_0(M,c),\mathrm{XRPH}_1(M,c),\ldots,\mathrm{XRPH}_n(M,c)\big).$$

\section{Signs of critical points and Morse functions}
\label{sec:morseTheory}
 
The main results of this paper use Morse theory, which relates the topology of a manifold to the critical points of differentiable functions on the manifold. As in~\cite{KT2022} both the smooth and piecewise linear (PL) versions of Morse theory are needed: the former for the theoretical development, and the latter for establishing the validity of numerical algorithms applied to PL manifolds as described in Section~\ref{application}. 
Note that the PL case here is not discrete Morse theory, but refers to functions that are defined on the vertices of a geometric simplicial complex and interpolated linearly on the simplices. 
In this section, we follow the setup described in \cite{KT2022} and briefly summarise the necessary points.  

\medskip

First, we need to define what it means for a point on a manifold to be a critical point. The standard way to define critical points is in terms of taking derivatives with respect to some charts. As noted in \cite{KT2022}, we can instead adopt an equivalent definition justified using the Morse Lemma.
\begin{defn}[Critical points of smooth functions]
Let $M$ be a smooth $n$-manifold \emph{without boundary} and $f: M\to \R$ be a smooth function. 
Then $p\in M$ is a \emph{regular point} of $f$ if there is a chart $(U,\phi)$ such that $\phi(p)=0$ and for $x=(x_1,\cdots,x_n)\in\R^n$ in a neighbourhood of $0$, we have
$$f\circ\phi^{-1}(x) = f(p)+x_n.$$ 
We call $p\in M$ a \emph{non-degenerate critical point of $f$ with Morse index $k$} if there is a chart $(U,\phi)$ such that $\phi(p)=0$ and for $x=(x_1,\cdots,x_n)\in\R^n$ in a neighbourhood of $0$, we have
$$f\circ\phi^{-1}(x) = f(p)-x_1^2-x_2^2-\cdots -x_k^2+x_{k+1}^2+\cdots + x_n^2.$$
\end{defn}

A similar definition for PL functions can be made using absolute values.
\begin{defn}[Critical points of piecewise linear functions]
Let $M$ be a PL $n$-manifold without boundary and $f: M\to \R$ a PL function. Then $p\in M$ is a \emph{regular point} of $f$ if there is a chart $(U,\phi)$ such that $\phi(p)=0$ and for $x=(x_1,\cdots,x_n)\in\R^n$ in a neighbourhood of $0$, we have
$$f\circ\phi^{-1}(x) = f(p)+x_n.$$ 
We call $p\in M$ a \emph{non-degenerate critical point of $f$ with Morse index $k$} if there is a chart $(U,\phi)$ such that $\phi(p)=0$ and for $x=(x_1,\cdots,x_n)\in\R^n$ in a neighbourhood of $0$, we have
$$f\circ\phi^{-1}(x) = f(p)-\lvert x_1\rvert-\lvert x_2\rvert-\cdots -\lvert x_k\rvert+\lvert x_{k+1}\rvert+\cdots + \lvert x_n\rvert.$$
\end{defn}

Since we model shapes as manifolds with boundary, we define critical points on the boundary using a similar formulation. These boundary critical points have an additional signature as  $(+)$- and $(-)$-critical points (compare with Definition \ref{localmin}). 
Note that critical points in the interior are treated as in the above definitions.  
 
\begin{defn}[Boundary critical points of smooth functions]
Let $(M,\partial M)$ be a smooth $n$-manifold with boundary and $f: M\to \R$ a smooth function. 
Then $p\in \partial M$ is a \emph{non-degenerate critical point of $f$ with index $(k,\eta)$} if there exists a chart $(U,\phi)$ such that $\phi(p) = 0$ and for $x= (x_1,\cdots,x_n)$ in a neigbourhood of $0$ in the closed half-space $\{(x_1,\cdots,x_n)\in\R^n\mid x_1\geq 0\}$ of $\R^n$, we have $\phi\vert_{\partial M}$ has the first coordinate $x_1=0$ and
$$f\circ \phi^{-1}(x) = f(p)+\eta x_1 - x_2^2-\cdots-x_{k+1}^2+x_{k+2}^2+\cdots x_n^2,$$ where $\eta\in \{-1,1\}$. We call $p$ a \emph{$(+)$-critical point} if $\eta=1$ and \emph{$(-)$-critical point} if $\eta=-1$.
\end{defn}

The analogous definition in the piecewise linear case is as expected: 
\begin{defn}[Boundary critical points of piecewise linear functions]
Let $(M,\partial M)$ be a piecewise linear $n$-manifold with boundary and $f: M\to \R$ a piecewise function. Then $p\in \partial M$ is a \emph{non-degenerate critical point of $f$ with index $(k,\eta)$} if there exists a chart $(U,\phi)$ such that $\phi(p) = 0$ and for $x= (x_1,\cdots,x_n)$ in a neigbourhood of $0$ in the closed half-space $\{(x_1,\cdots,x_n)\in\R^n\vert x_1\geq 0\}$ of $\R^n$, we have
$\phi\vert_{\partial M}$ has the first coordinate $x_1=0$ and 
$$f\circ \phi^{-1}(x) = f(p)+\eta x_1 - \lvert x_2\rvert-\cdots- \lvert x_{k+1}\rvert+ \lvert x_{k+2}\rvert+\cdots  \lvert x_n\rvert,$$ where $\eta\in \{-1,1\}$. We call $p$ a \emph{$(+)$-critical point} if $\eta=1$ and \emph{$(-)$-critical point} if $\eta=-1$. 
\end{defn}

We are now ready to formally define Morse functions. 

\begin{defn}[Morse function]
Let $(M, \partial M)$ be a smooth (or PL) manifold. A function $f: M\to\R$ is a \emph{Morse function} if
\begin{enumerate}[(i)]
\item $f$ is smooth (or piecewise linear);
\item Each critical point of $f\vert_{\text{int}(M)}$ and $f\vert_{\partial M}$ is non-degenerate;
\item The combined number of critical points is finite and they all take distinct values.
\end{enumerate}
\end{defn}

It is easy to see from the definition of $\rho_c$ that radial functions defined on a finite simplicial complex for almost all choices of centre point $c$ will satisfy the requirements to be a Morse function. In particular, a sufficient condition is that all vertices have distinct distances to $c$. We call a centre $c$ \emph{generic} if $\rho_c: M\to \R$ is Morse.
\section[XRPH of manifolds with boundary]{Relating XRPH of a manifold to XRPH of its boundary}
\label{sec:proof}

In this section, we show that we can recover the XRPH of a compact $n$-manifold with boundary $(M,\partial M)$ from the XRPH of the boundary components, $\partial M$. From now on, the manifold of interest will always be denoted with $(M,\partial M)$. Note that if $M=\sqcup_{i=1}^m M_i$ is a disjoint union of connected components, we have $$\text{XRPH}(M,c)=\bigoplus_{i=1}^m\text{XRPH}(M_i,c).$$ 
So we assume without loss of generality in the following that $M$ is connected.

The relationship between the two modules depends on the location of the centre $c$ with respect to $M$ and $\partial M$. In particular, we show that the degree-0 and degree-$n$ extended radial persistence modules depend on whether $c\in \mathrm{int}\,M$ or $c\not\in M$. 
For example, if $c\in \mathrm{int}\, M$ then $\mathrm{XRPH_0}(M,c)$ has a component born at $(0,\mathrm{Ord})$, but no such component exists in $\mathrm{XRPH}_0(M,c)$ when $c\not\in M$ and 
there is a non-trivial interval module in $\mathrm{XRPH}_n(M,c)$ only when $c\in \mathrm{int}\, M$. 
The degree-$(n-1)$ extended radial persistence module, on the other hand, depends on whether the minimum value of $\rho_c$ restricted to $\partial M$ is achieved on what we call the \textit{exterior boundary component} of $M$. If this is the case, the matching between the XRPH of the manifold and its boundary can be straightforward. Otherwise, a ``cascading" behavior described in \cite{matching} occurs, and an alternative algorithm is needed.

Since the above two considerations regarding the location of $c$ impact extended radial persistence modules of different homology degrees, we can decouple their effects so that two examples are sufficient to demonstrate them all. Figures \ref{fig:four-panel} and \ref{fig:four-panel2} illustrate all of the above cases for $n=2$. In Figure \ref{fig:four-panel}, we have the case $c\notin M$ and $\rho_c\vert_{\partial M}$ achieves its minimum value $r_1$ on the exterior boundary.  We see that all interval modules in $\mathrm{XRPH}_k(M,c)$ occur in $\mathrm{XRPH}_k(\partial M,c)$ for $k=0,1$.  
Meanwhile, in Figure \ref{fig:four-panel2}, we have $c\in M$ and $\rho_c\vert_{\partial M}$ achieves its minimum value $r_1$ on the interior boundary. Note also that the interval decompositions in $\mathrm{XRPH}_0(M,c)$ do not have all the birth parameters that occurred in $\mathrm{XRPH}_0(\partial M,c)$. Moreover, the interval module $\mathrm{XRPH}_1(M,c)$ has a birth parameter matching one of the interval modules in $\mathrm{XRPH}_1(\partial M,c)$ but death parameter matching that of another, signaling the cascading behavior. The captions of the figures give a more algorithmic description of the matching.

\begin{figure}[htbp]
    \begin{center}
    \begin{tikzpicture}
        \node[anchor=north west] at (0, 0) {%
            \includegraphics[width=0.4 \textwidth]{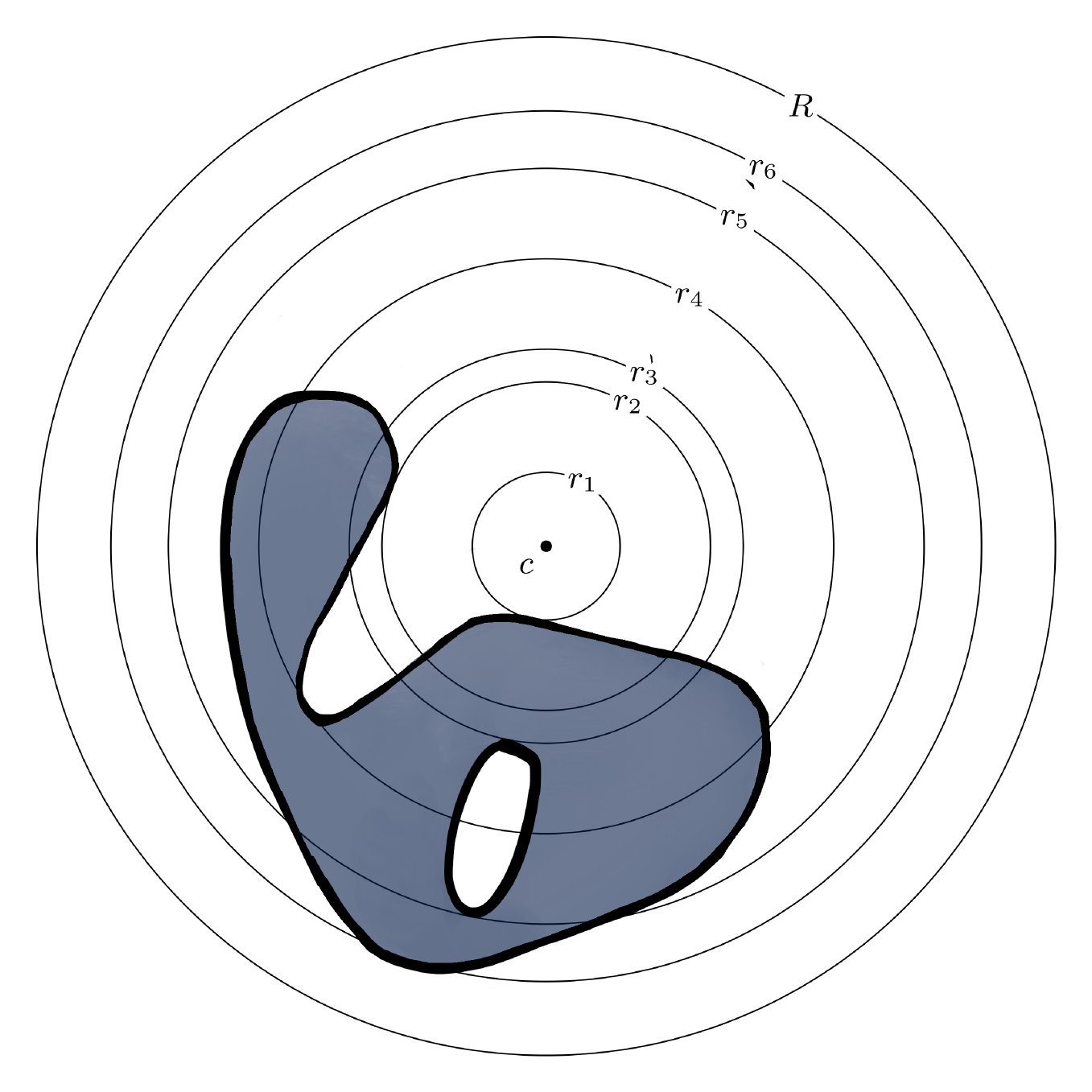}

        };
    \end{tikzpicture}%
    \hfill
    \begin{tikzpicture}
        \node[anchor=north west] at (0, 0) {%
            \includegraphics[width=0.4 \textwidth]{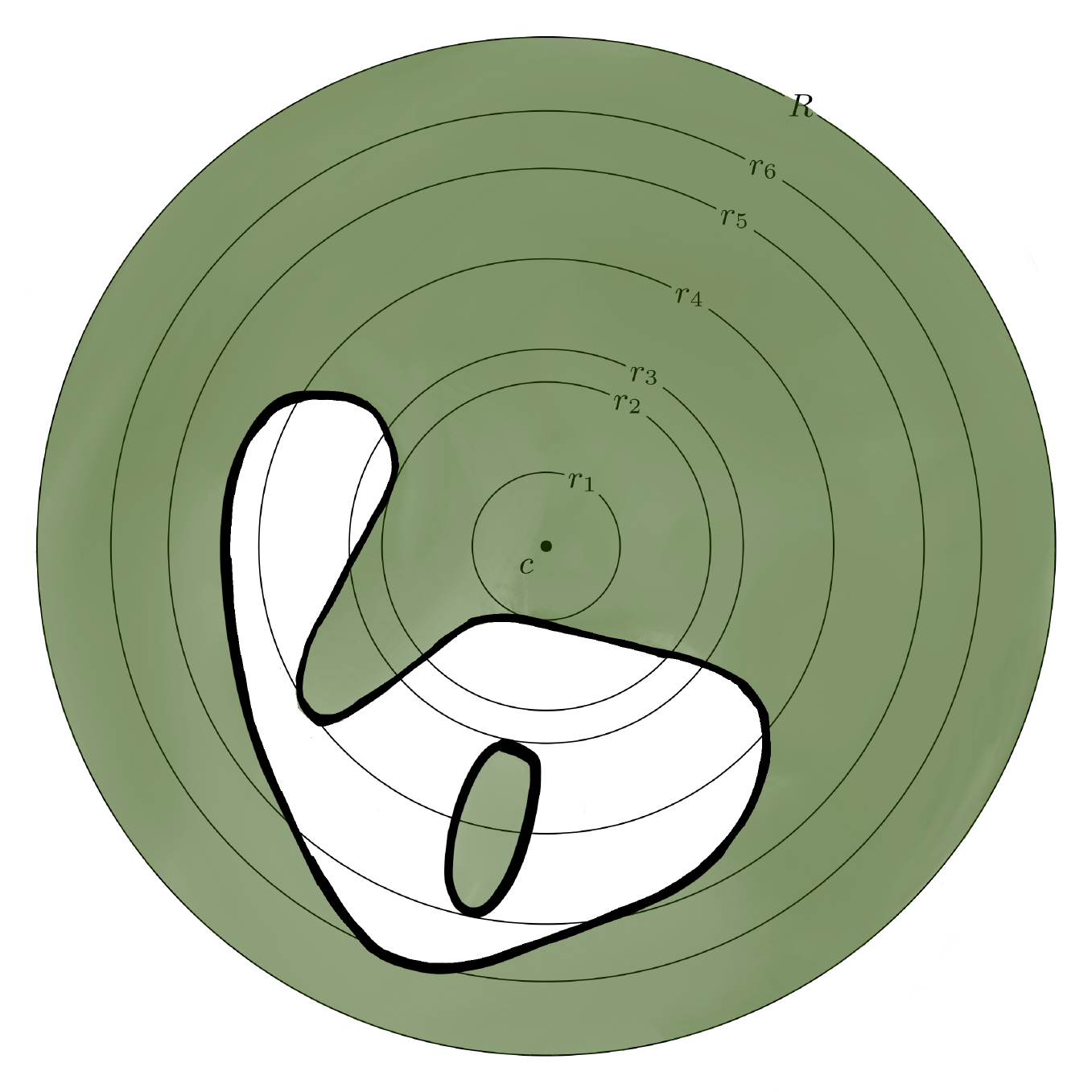}  
        };
    \end{tikzpicture}
    \makebox[0.48\textwidth][l]{\small\textbf{(a)} $M$}%
    \hfill
    \makebox[0.48\textwidth][l]{\small\textbf{(b)} $L=\overline{B(0,R)\backslash M}$}

    \vspace{1em}
\begin{tikzpicture}[x=1cm, y=1cm]

\def\tickA{0.000}   
\def\tickB{1.016}   
\def\tickC{2.258}   
\def\tickD{2.710}   
\def\tickE{3.952}   
\def\tickF{5.194}   
\def\tickG{5.984}   
\def\tickR{7.000}   
\def\tickRr{7.600}  
\def\tickGr{8.616}  
\def\tickFr{9.406}  
\def\tickEr{10.648} 
\def\tickDr{11.890} 
\def\tickCr{12.342} 
\def\tickBr{13.584} 
\def\tickAr{14.600} 

\def\barH{0.12}
\pgfmathsetmacro{\axisline}{0.7 - 0.15}  
\draw[gray!60, line width=0.8pt]
  (\tickA,  \axisline) -- (\tickR,  \axisline);
\draw[gray!60, line width=0.8pt]
  (\tickRr, \axisline) -- (\tickAr, \axisline);
\draw[gray!50, dotted, line width=0.8pt]
  (7.3, 0.40) -- (7.3, 3.5);
\node[font=\small, text=gray] at (3.5,  3.35) {ordinary};
\node[font=\small, text=gray] at (11.1, 3.35) {relative};
\foreach \x in {
  \tickA, \tickB, \tickC, \tickD, \tickE, \tickF, \tickG, \tickR,
  \tickRr, \tickGr, \tickFr, \tickEr, \tickDr, \tickCr, \tickBr, \tickAr
}{
  \draw[gray!60] (\x, \axisline-0.07) -- (\x, \axisline+0.07);
}
\node[font=\tiny, rotate=270, anchor=west] at (\tickA,  \axisline-0.07) {$(0,\mathrm{Ord})$};
\node[font=\tiny, rotate=270, anchor=west] at (\tickB,  \axisline-0.07) {$(r_1,\mathrm{Ord})$};
\node[font=\tiny, rotate=270, anchor=west] at (\tickC,  \axisline-0.07) {$(r_2,\mathrm{Ord})$};
\node[font=\tiny, rotate=270, anchor=west] at (\tickD,  \axisline-0.07) {$(r_3,\mathrm{Ord})$};
\node[font=\tiny, rotate=270, anchor=west] at (\tickE,  \axisline-0.07) {$(r_4,\mathrm{Ord})$};
\node[font=\tiny, rotate=270, anchor=west] at (\tickF,  \axisline-0.07) {$(r_5,\mathrm{Ord})$};
\node[font=\tiny, rotate=270, anchor=west] at (\tickG,  \axisline-0.07) {$(r_6,\mathrm{Ord})$};
\node[font=\tiny, rotate=270, anchor=west] at (\tickR,  \axisline-0.07) {$(R,\mathrm{Ord})$};
\node[font=\tiny, rotate=270, anchor=west] at (\tickRr, \axisline-0.07) {$(R,\mathrm{Rel})$};
\node[font=\tiny, rotate=270, anchor=west] at (\tickGr, \axisline-0.07) {$(r_6,\mathrm{Rel})$};
\node[font=\tiny, rotate=270, anchor=west] at (\tickFr, \axisline-0.07) {$(r_5,\mathrm{Rel})$};
\node[font=\tiny, rotate=270, anchor=west] at (\tickEr, \axisline-0.07) {$(r_4,\mathrm{Rel})$};
\node[font=\tiny, rotate=270, anchor=west] at (\tickDr, \axisline-0.07) {$(r_3,\mathrm{Rel})$};
\node[font=\tiny, rotate=270, anchor=west] at (\tickCr, \axisline-0.07) {$(r_2,\mathrm{Rel})$};
\node[font=\tiny, rotate=270, anchor=west] at (\tickBr, \axisline-0.07) {$(r_1,\mathrm{Rel})$};
\node[font=\tiny, rotate=270, anchor=west] at (\tickAr, \axisline-0.07) {$(0,\mathrm{Rel})$};
\fill[blue!80!black]
  (\tickB, 2.74) rectangle (\tickGr, 2.74+\barH);
\fill[blue!80!black] (\tickB,  2.74+\barH/2) circle (1.5pt);
\fill[blue!80!black] (\tickGr, 2.74+\barH/2) circle (1.5pt);
\fill[blue!80!black]
  (\tickC, 2.44) rectangle (\tickE, 2.44+\barH);
\fill[blue!80!black] (\tickC, 2.44+\barH/2) circle (1.5pt);
\fill[blue!80!black] (\tickE, 2.44+\barH/2) circle (1.5pt);
\node[font=\small, blue!80!black, anchor=west]
  at (\tickAr+0.2, 2.59+\barH/2)
  {$\mathrm{XRPH}_0(M,c)$};
\draw[gray!40, dashed, line width=0.6pt]
  (\tickA, 2.29) -- (\tickAr, 2.29);
\fill[black!80]
  (\tickB, 2.02) rectangle (\tickGr, 2.02+\barH);
\fill[black!80] (\tickB,  2.02+\barH/2) circle (1.5pt);
\fill[black!80] (\tickGr, 2.02+\barH/2) circle (1.5pt);
\node[font=\small, black!80, anchor=east]
  at (\tickB-0.12, 2.02+\barH/2) {$+$};
\fill[black!80]
  (\tickD, 1.72) rectangle (\tickFr, 1.72+\barH);
\fill[black!80] (\tickD,  1.72+\barH/2) circle (1.5pt);
\fill[black!80] (\tickFr, 1.72+\barH/2) circle (1.5pt);
\node[font=\small, black!80, anchor=east]
  at (\tickD-0.12, 1.72+\barH/2) {$-$};
\fill[black!80]
  (\tickC, 1.42) rectangle (\tickE, 1.42+\barH);
\fill[black!80] (\tickC, 1.42+\barH/2) circle (1.5pt);
\fill[black!80] (\tickE, 1.42+\barH/2) circle (1.5pt);
\node[font=\small, black!80, anchor=east]
  at (\tickC-0.12, 1.42+\barH/2) {$+$};
\node[font=\small, black!80, anchor=west]
  at (\tickAr+0.2, 1.72+\barH/2)
  {$\mathrm{XRPH}_0(\partial M,c)$};
\draw[gray!40, dashed, line width=0.6pt]
  (\tickA, 1.27) -- (\tickAr, 1.27);
\fill[green!50!black]
  (\tickA, 1.00) rectangle (\tickRr, 1.00+\barH);
\fill[green!50!black] (\tickA,  1.00+\barH/2) circle (1.5pt);
\fill[green!50!black] (\tickRr, 1.00+\barH/2) circle (1.5pt);
\fill[green!50!black]
  (\tickD, 0.70) rectangle (\tickFr, 0.70+\barH);
\fill[green!50!black] (\tickD,  0.70+\barH/2) circle (1.5pt);
\fill[green!50!black] (\tickFr, 0.70+\barH/2) circle (1.5pt);
\node[font=\small, green!50!black, anchor=west]
  at (\tickAr+0.2, 0.85+\barH/2)
  {$\mathrm{XRPH}_0(L,c)$};

\end{tikzpicture}
    \makebox[\textwidth][l]{\small\textbf{(c)} Extended radial persistence modules in homology degree $0$ for $M$, $\partial M$ and $L$.}

    \vspace{1em}

\begin{tikzpicture}[x=1cm, y=1cm]

\def\tickA{0.000}   
\def\tickB{1.016}   
\def\tickC{2.258}   
\def\tickD{2.710}   
\def\tickE{3.952}   
\def\tickF{5.194}   
\def\tickG{5.984}   
\def\tickR{7.000}   
\def\tickRr{7.600}  
\def\tickGr{8.616}  
\def\tickFr{9.406}  
\def\tickEr{10.648} 
\def\tickDr{11.890} 
\def\tickCr{12.342} 
\def\tickBr{13.584} 
\def\tickAr{14.600} 

\def\barH{0.12}
\pgfmathsetmacro{\axisline}{1.0 - 0.15}  
\draw[gray!60, line width=0.8pt]
  (\tickA,  \axisline) -- (\tickR,  \axisline);
\draw[gray!60, line width=0.8pt]
  (\tickRr, \axisline) -- (\tickAr, \axisline);
\draw[gray!50, dotted, line width=0.8pt]
  (7.3, 0.70) -- (7.3, 3.5);
\node[font=\small, text=gray] at (3.5,  3.35) {ordinary};
\node[font=\small, text=gray] at (11.1, 3.35) {relative};
\foreach \x in {
  \tickA, \tickB, \tickC, \tickD, \tickE, \tickF, \tickG, \tickR,
  \tickRr, \tickGr, \tickFr, \tickEr, \tickDr, \tickCr, \tickBr, \tickAr
}{
  \draw[gray!60] (\x, \axisline-0.07) -- (\x, \axisline+0.07);
}
\node[font=\tiny, rotate=270, anchor=west] at (\tickA,  \axisline-0.07) {$(0,\mathrm{Ord})$};
\node[font=\tiny, rotate=270, anchor=west] at (\tickB,  \axisline-0.07) {$(r_1,\mathrm{Ord})$};
\node[font=\tiny, rotate=270, anchor=west] at (\tickC,  \axisline-0.07) {$(r_2,\mathrm{Ord})$};
\node[font=\tiny, rotate=270, anchor=west] at (\tickD,  \axisline-0.07) {$(r_3,\mathrm{Ord})$};
\node[font=\tiny, rotate=270, anchor=west] at (\tickE,  \axisline-0.07) {$(r_4,\mathrm{Ord})$};
\node[font=\tiny, rotate=270, anchor=west] at (\tickF,  \axisline-0.07) {$(r_5,\mathrm{Ord})$};
\node[font=\tiny, rotate=270, anchor=west] at (\tickG,  \axisline-0.07) {$(r_6,\mathrm{Ord})$};
\node[font=\tiny, rotate=270, anchor=west] at (\tickR,  \axisline-0.07) {$(R,\mathrm{Ord})$};
\node[font=\tiny, rotate=270, anchor=west] at (\tickRr, \axisline-0.07) {$(R,\mathrm{Rel})$};
\node[font=\tiny, rotate=270, anchor=west] at (\tickGr, \axisline-0.07) {$(r_6,\mathrm{Rel})$};
\node[font=\tiny, rotate=270, anchor=west] at (\tickFr, \axisline-0.07) {$(r_5,\mathrm{Rel})$};
\node[font=\tiny, rotate=270, anchor=west] at (\tickEr, \axisline-0.07) {$(r_4,\mathrm{Rel})$};
\node[font=\tiny, rotate=270, anchor=west] at (\tickDr, \axisline-0.07) {$(r_3,\mathrm{Rel})$};
\node[font=\tiny, rotate=270, anchor=west] at (\tickCr, \axisline-0.07) {$(r_2,\mathrm{Rel})$};
\node[font=\tiny, rotate=270, anchor=west] at (\tickBr, \axisline-0.07) {$(r_1,\mathrm{Rel})$};
\node[font=\tiny, rotate=270, anchor=west] at (\tickAr, \axisline-0.07) {$(0,\mathrm{Rel})$};
\fill[blue!80!black]
  (\tickF, 2.74) rectangle (\tickDr, 2.74+\barH);
\fill[blue!80!black] (\tickF,  2.74+\barH/2) circle (1.5pt);
\fill[blue!80!black] (\tickDr, 2.74+\barH/2) circle (1.5pt);
\node[font=\small, blue!80!black, anchor=west]
  at (\tickAr+0.2, 2.74+\barH/2)
  {$\mathrm{XRPH}_1(M,c)$};
\draw[gray!40, dashed, line width=0.6pt]
  (\tickA, 2.59) -- (\tickAr, 2.59);
\fill[black!80]
  (\tickG, 2.32) rectangle (\tickBr, 2.32+\barH);
\fill[black!80] (\tickG,  2.32+\barH/2) circle (1.5pt);
\fill[black!80] (\tickBr, 2.32+\barH/2) circle (1.5pt);
\node[font=\small, black!80, anchor=east]
  at (\tickG-0.12, 2.32+\barH/2) {$-$};
\fill[black!80]
  (\tickF, 2.02) rectangle (\tickDr, 2.02+\barH);
\fill[black!80] (\tickF,  2.02+\barH/2) circle (1.5pt);
\fill[black!80] (\tickDr, 2.02+\barH/2) circle (1.5pt);
\node[font=\small, black!80, anchor=east]
  at (\tickF-0.12, 2.02+\barH/2) {$+$};
\fill[black!80]
  (\tickEr, 1.72) rectangle (\tickCr, 1.72+\barH);
\fill[black!80] (\tickEr, 1.72+\barH/2) circle (1.5pt);
\fill[black!80] (\tickCr, 1.72+\barH/2) circle (1.5pt);
\node[font=\small, black!80, anchor=east]
  at (\tickEr-0.12, 1.72+\barH/2) {$+$};
\node[font=\small, black!80, anchor=west]
  at (\tickAr+0.2, 2.02+\barH/2)
  {$\mathrm{XRPH}_1(\partial M,c)$};
\draw[gray!40, dashed, line width=0.6pt]
  (\tickA, 1.57) -- (\tickAr, 1.57);
\fill[green!50!black]
  (\tickG, 1.30) rectangle (\tickRr, 1.30+\barH);
\fill[green!50!black] (\tickG,  1.30+\barH/2) circle (1.5pt);
\fill[green!50!black] (\tickRr, 1.30+\barH/2) circle (1.5pt);
\fill[green!50!black]
  (\tickEr, 1.00) rectangle (\tickCr, 1.00+\barH);
\fill[green!50!black] (\tickEr, 1.00+\barH/2) circle (1.5pt);
\fill[green!50!black] (\tickCr, 1.00+\barH/2) circle (1.5pt);
\node[font=\small, green!50!black, anchor=west]
  at (\tickAr+0.2, 1.15+\barH/2)
  {$\mathrm{XRPH}_1(L,c)$};

\end{tikzpicture}    
    \makebox[\textwidth][l]{\small\textbf{(d)} Extended radial persistence modules in homology degree $1$ for $M$, $\partial M$ and $L$.}
\end{center}
\caption{
 \emph{Location of $c$:} In this example $c$ is in the infinite component of $\R^2\backslash M$. We can see this from the signed annotation of $\mathrm{XRPH}_0(\partial M,c)$. The earliest birth in $\partial M$ has a positive sign which  implies that $c\notin M$. This bar also has the earliest death of all the essential bars which means that $c$ is in the infinite component of $\R^2 \backslash M$. In this scenario it is easy to read off the extended persistent homology both in homology degree $0$ and in homology degree $1$. 
\emph{Obtaining $\mathrm{XRPH}_0(M,c)$ from the signed annotation of $\mathrm{XRPH}_0(\partial M,c)$:} 
The interval decomposition of $\mathrm{XRPH}_0(M,c)$ consists of the intervals in $\mathrm{XRPH}_0(\partial M,c)$ with a positive sign attached to the birth. 
\emph{Obtaining $\mathrm{XRPH}_1(M,c)$ from the signed annotation of $\mathrm{XRPH}_1(\partial M,c)$:} 
The interval decomposition of $\mathrm{XRPH}_1(M,c)$ consists of the intervals in $\mathrm{XRPH}_1(\partial M,c)$ with a positive sign attached to the birth. 
\emph{Obtaining $\mathrm{XRPH}_2(M,c)$:}  As $c\notin M$ we know that $\mathrm{XRPH}_2(M,c)=0$. 
}\label{fig:four-panel}
\end{figure}

\begin{figure}[htbp]
    \begin{center}
    \begin{tikzpicture}
        \node[anchor=north west] at (0, 0) {%
            \includegraphics[width=0.4 \textwidth]{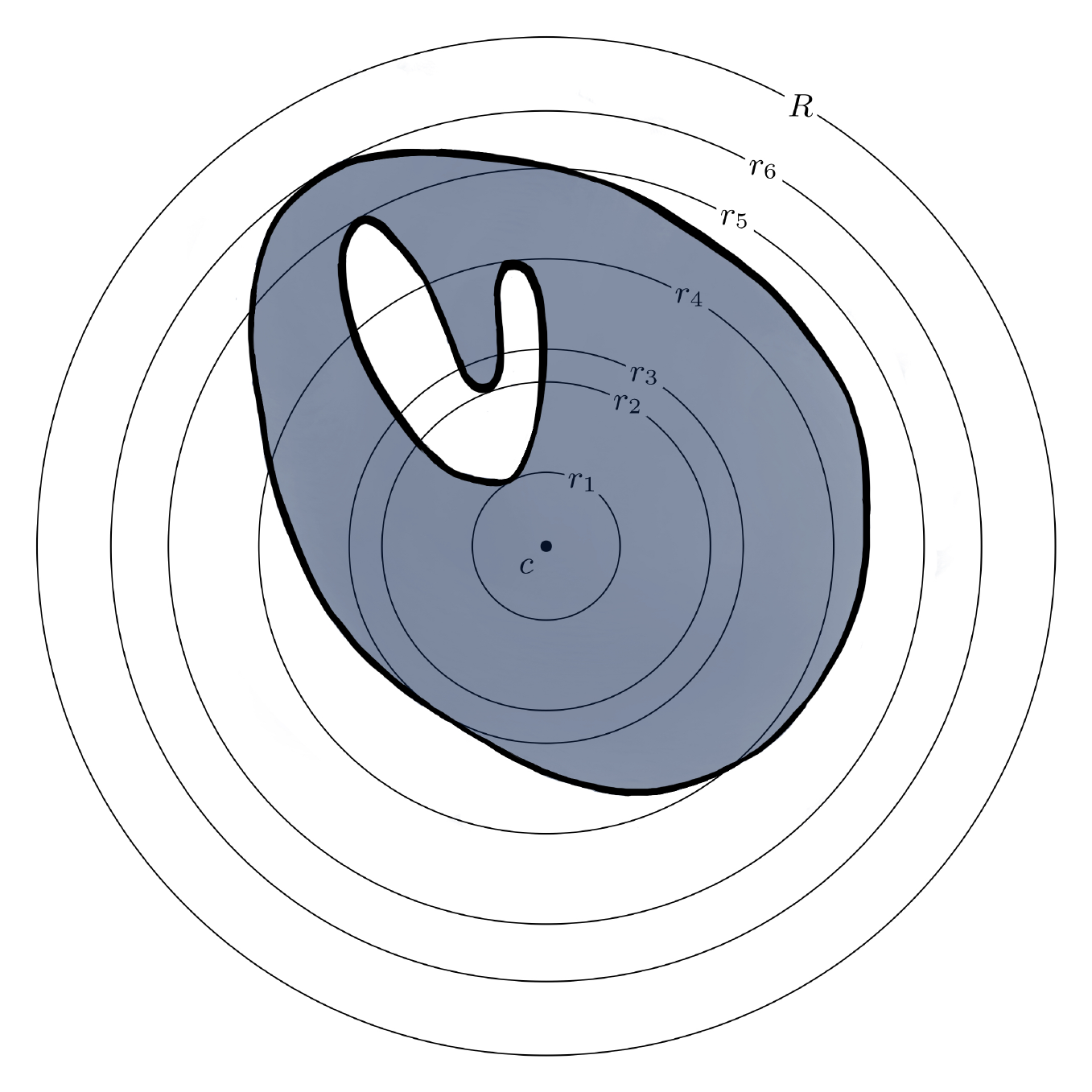}

        };
    \end{tikzpicture}%
    \hfill
    \begin{tikzpicture}
        \node[anchor=north west] at (0, 0) {%
            \includegraphics[width=0.4 \textwidth]{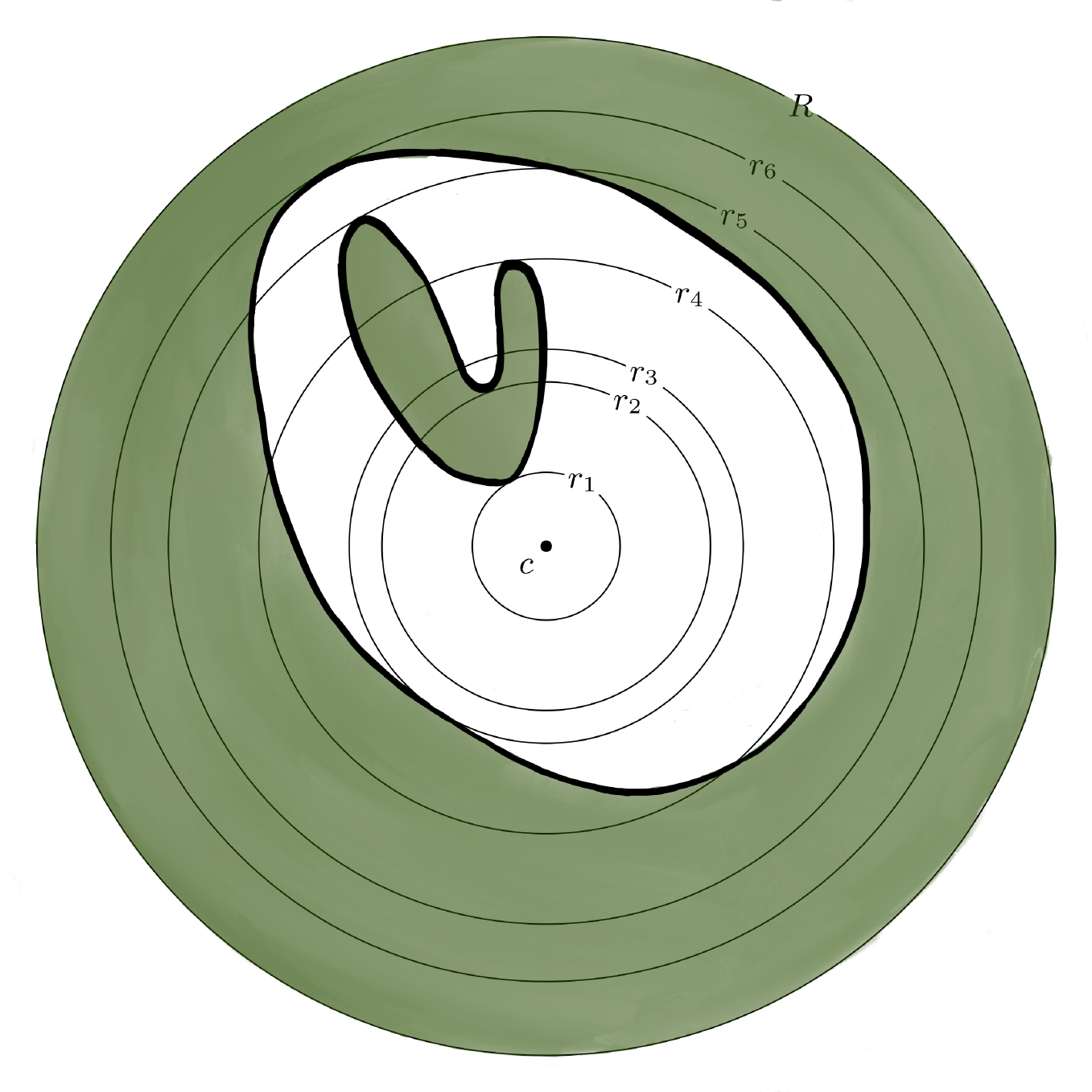}  
        };
    \end{tikzpicture}
    \makebox[0.48\textwidth][l]{\small\textbf{(a)} $M$}%
    \hfill
    \makebox[0.48\textwidth][l]{\small\textbf{(b)} $L=\overline{B(0,R)\backslash M}$}

    \vspace{1em}
\end{center}

\begin{tikzpicture}[x=1cm, y=1cm]
\def\tickA{0.000}   
\def\tickB{1.016}   
\def\tickC{2.258}   
\def\tickD{2.710}   
\def\tickE{3.952}   
\def\tickF{5.194}   
\def\tickG{5.984}   
\def\tickR{7.000}   
\def\tickRr{7.600}  
\def\tickGr{8.616}  
\def\tickFr{9.406}  
\def\tickEr{10.648} 
\def\tickDr{11.890} 
\def\tickCr{12.342} 
\def\tickBr{13.584} 
\def\tickAr{14.600} 

\def\barH{0.12}
\def\axisY{-0.3}
\draw[gray!60, line width=0.8pt]
  (\tickA, \axisY+0.7) -- (\tickR, \axisY+0.7);
\draw[gray!60, line width=0.8pt]
  (\tickRr, \axisY+0.7) -- (\tickAr, \axisY+0.7);
\draw[gray!50, dotted, line width=0.8pt]
  (7.3, \axisY-0.15) -- (7.3, 3.5);
\node[font=\small, text=gray] at (3.5,  3.35) {ordinary};
\node[font=\small, text=gray] at (11.1, 3.35) {relative};
\foreach \x in {
  \tickA, \tickB, \tickC, \tickD, \tickE, \tickF, \tickG, \tickR,
  \tickRr, \tickGr, \tickFr, \tickEr, \tickDr, \tickCr, \tickBr, \tickAr
}{
  \draw[gray!60] (\x, \axisY+0.63) -- (\x, \axisY+0.77);
}
\node[font=\tiny, rotate=270, anchor=west] at (\tickA,  \axisY+0.58) {$(0,\mathrm{Ord})$};
\node[font=\tiny, rotate=270, anchor=west] at (\tickB,  \axisY+0.58) {$(r_1,\mathrm{Ord})$};
\node[font=\tiny, rotate=270, anchor=west] at (\tickC,  \axisY+0.58) {$(r_2,\mathrm{Ord})$};
\node[font=\tiny, rotate=270, anchor=west] at (\tickD,  \axisY+0.58) {$(r_3,\mathrm{Ord})$};
\node[font=\tiny, rotate=270, anchor=west] at (\tickE,  \axisY+0.58) {$(r_4,\mathrm{Ord})$};
\node[font=\tiny, rotate=270, anchor=west] at (\tickF,  \axisY+0.58) {$(r_5,\mathrm{Ord})$};
\node[font=\tiny, rotate=270, anchor=west] at (\tickG,  \axisY+0.58) {$(r_6,\mathrm{Ord})$};
\node[font=\tiny, rotate=270, anchor=west] at (\tickR,  \axisY+0.58) {$(R,\mathrm{Ord})$};
\node[font=\tiny, rotate=270, anchor=west] at (\tickRr, \axisY+0.58) {$(R,\mathrm{Rel})$};
\node[font=\tiny, rotate=270, anchor=west] at (\tickGr, \axisY+0.58) {$(r_6,\mathrm{Rel})$};
\node[font=\tiny, rotate=270, anchor=west] at (\tickFr, \axisY+0.58) {$(r_5,\mathrm{Rel})$};
\node[font=\tiny, rotate=270, anchor=west] at (\tickEr, \axisY+0.58) {$(r_4,\mathrm{Rel})$};
\node[font=\tiny, rotate=270, anchor=west] at (\tickDr, \axisY+0.58) {$(r_3,\mathrm{Rel})$};
\node[font=\tiny, rotate=270, anchor=west] at (\tickCr, \axisY+0.58) {$(r_2,\mathrm{Rel})$};
\node[font=\tiny, rotate=270, anchor=west] at (\tickBr, \axisY+0.58) {$(r_1,\mathrm{Rel})$};
\node[font=\tiny, rotate=270, anchor=west] at (\tickAr, \axisY+0.58) {$(0,\mathrm{Rel})$};
\fill[blue!80!black]
  (\tickA, 2.6) rectangle (\tickGr, 2.6+\barH);
\fill[blue!80!black] (\tickA,  2.6+\barH/2) circle (1.5pt);
\fill[blue!80!black] (\tickGr, 2.6+\barH/2) circle (1.5pt);
\fill[blue!80!black]
  (\tickC, 2.3) rectangle (\tickE, 2.3+\barH);
\fill[blue!80!black] (\tickC, 2.3+\barH/2) circle (1.5pt);
\fill[blue!80!black] (\tickE, 2.3+\barH/2) circle (1.5pt);
\node[font=\small, blue!80!black, anchor=west]
  at (\tickAr+0.2, 2.45+\barH/2)
  {$\mathrm{XRPH}_0(M,c)$};
\draw[gray!40, dashed, line width=0.6pt]
  (\tickA, 2.15) -- (\tickAr, 2.15);
\fill[black!80]
  (\tickB, 1.9) rectangle (\tickFr, 1.9+\barH);
\fill[black!80] (\tickB,  1.9+\barH/2) circle (1.5pt);
\fill[black!80] (\tickFr, 1.9+\barH/2) circle (1.5pt);
\node[font=\small, black!80, anchor=east]
  at (\tickB-0.12, 1.9+\barH/2) {$-$};
\fill[black!80]
  (\tickD, 1.6) rectangle (\tickGr, 1.6+\barH);
\fill[black!80] (\tickD,  1.6+\barH/2) circle (1.5pt);
\fill[black!80] (\tickGr, 1.6+\barH/2) circle (1.5pt);
\node[font=\small, black!80, anchor=east]
  at (\tickD-0.12, 1.6+\barH/2) {$-$};
\fill[black!80]
  (\tickC, 1.3) rectangle (\tickE, 1.3+\barH);
\fill[black!80] (\tickC, 1.3+\barH/2) circle (1.5pt);
\fill[black!80] (\tickE, 1.3+\barH/2) circle (1.5pt);
\node[font=\small, black!80, anchor=east]
  at (\tickC-0.12, 1.3+\barH/2) {$+$};
\node[font=\small, black!80, anchor=west]
  at (\tickAr+0.2, 1.6+\barH/2)
  {$\mathrm{XRPH}_0(\partial M,c)$};
\draw[gray!40, dashed, line width=0.6pt]
  (\tickA, 1.05) -- (\tickAr, 1.05);
\fill[green!50!black]
  (\tickB, 0.8) rectangle (\tickFr, 0.8+\barH);
\fill[green!50!black] (\tickB,  0.8+\barH/2) circle (1.5pt);
\fill[green!50!black] (\tickFr, 0.8+\barH/2) circle (1.5pt);
\fill[green!50!black]
  (\tickD, 0.5) rectangle (\tickRr, 0.5+\barH);
\fill[green!50!black] (\tickD,  0.5+\barH/2) circle (1.5pt);
\fill[green!50!black] (\tickRr, 0.5+\barH/2) circle (1.5pt);
\node[font=\small, green!50!black, anchor=west]
  at (\tickAr+0.2, 0.65+\barH/2)
  {$\mathrm{XRPH}_0(L,c)$};

\end{tikzpicture}
    \makebox[\textwidth][l]{\small\textbf{(c)} Extended radial persistence modules in homology degree $0$ for $M$, $\partial M$ and $L$.}

    \vspace{1em}

\begin{tikzpicture}[x=1cm, y=1cm]

\def\tickA{0.000}   
\def\tickB{1.016}   
\def\tickC{2.258}   
\def\tickD{2.710}   
\def\tickE{3.952}   
\def\tickF{5.194}   
\def\tickG{5.984}   
\def\tickR{7.000}   
\def\tickRr{7.600}  
\def\tickGr{8.616}  
\def\tickFr{9.406}  
\def\tickEr{10.648} 
\def\tickDr{11.890} 
\def\tickCr{12.342} 
\def\tickBr{13.584} 
\def\tickAr{14.600} 

\def\barH{0.12}
\pgfmathsetmacro{\axisline}{1.08 - 0.15}  
\draw[gray!60, line width=0.8pt]
  (\tickA,  \axisline) -- (\tickR,  \axisline);
\draw[gray!60, line width=0.8pt]
  (\tickRr, \axisline) -- (\tickAr, \axisline);
\draw[gray!50, dotted, line width=0.8pt]
  (7.3, 0.75) -- (7.3, 3.5);
\node[font=\small, text=gray] at (3.5,  3.35) {ordinary};
\node[font=\small, text=gray] at (11.1, 3.35) {relative};
\foreach \x in {
  \tickA, \tickB, \tickC, \tickD, \tickE, \tickF, \tickG, \tickR,
  \tickRr, \tickGr, \tickFr, \tickEr, \tickDr, \tickCr, \tickBr, \tickAr
}{
  \draw[gray!60] (\x, \axisline-0.07) -- (\x, \axisline+0.07);
}
\node[font=\tiny, rotate=270, anchor=west] at (\tickA,  \axisline-0.07) {$(0,\mathrm{Ord})$};
\node[font=\tiny, rotate=270, anchor=west] at (\tickB,  \axisline-0.07) {$(r_1,\mathrm{Ord})$};
\node[font=\tiny, rotate=270, anchor=west] at (\tickC,  \axisline-0.07) {$(r_2,\mathrm{Ord})$};
\node[font=\tiny, rotate=270, anchor=west] at (\tickD,  \axisline-0.07) {$(r_3,\mathrm{Ord})$};
\node[font=\tiny, rotate=270, anchor=west] at (\tickE,  \axisline-0.07) {$(r_4,\mathrm{Ord})$};
\node[font=\tiny, rotate=270, anchor=west] at (\tickF,  \axisline-0.07) {$(r_5,\mathrm{Ord})$};
\node[font=\tiny, rotate=270, anchor=west] at (\tickG,  \axisline-0.07) {$(r_6,\mathrm{Ord})$};
\node[font=\tiny, rotate=270, anchor=west] at (\tickR,  \axisline-0.07) {$(R,\mathrm{Ord})$};
\node[font=\tiny, rotate=270, anchor=west] at (\tickRr, \axisline-0.07) {$(R,\mathrm{Rel})$};
\node[font=\tiny, rotate=270, anchor=west] at (\tickGr, \axisline-0.07) {$(r_6,\mathrm{Rel})$};
\node[font=\tiny, rotate=270, anchor=west] at (\tickFr, \axisline-0.07) {$(r_5,\mathrm{Rel})$};
\node[font=\tiny, rotate=270, anchor=west] at (\tickEr, \axisline-0.07) {$(r_4,\mathrm{Rel})$};
\node[font=\tiny, rotate=270, anchor=west] at (\tickDr, \axisline-0.07) {$(r_3,\mathrm{Rel})$};
\node[font=\tiny, rotate=270, anchor=west] at (\tickCr, \axisline-0.07) {$(r_2,\mathrm{Rel})$};
\node[font=\tiny, rotate=270, anchor=west] at (\tickBr, \axisline-0.07) {$(r_1,\mathrm{Rel})$};
\node[font=\tiny, rotate=270, anchor=west] at (\tickAr, \axisline-0.07) {$(0,\mathrm{Rel})$};
\fill[blue!80!black]
  (\tickF, 2.9) rectangle (\tickDr, 2.9+\barH);
\fill[blue!80!black] (\tickF,  2.9+\barH/2) circle (1.5pt);
\fill[blue!80!black] (\tickDr, 2.9+\barH/2) circle (1.5pt);
\node[font=\small, blue!80!black, anchor=west]
  at (\tickAr+0.2, 2.9+\barH/2)
  {$\mathrm{XRPH}_1(M,c)$};
\draw[gray!40, dashed, line width=0.6pt]
  (\tickA, 2.65) -- (\tickAr, 2.65);
\fill[black!80]
  (\tickF, 2.4) rectangle (\tickBr, 2.4+\barH);
\fill[black!80] (\tickF,  2.4+\barH/2) circle (1.5pt);
\fill[black!80] (\tickBr, 2.4+\barH/2) circle (1.5pt);
\node[font=\small, black!80, anchor=east]
  at (\tickF-0.12, 2.4+\barH/2) {$+$};
\fill[black!80]
  (\tickG, 2.1) rectangle (\tickDr, 2.1+\barH);
\fill[black!80] (\tickG,  2.1+\barH/2) circle (1.5pt);
\fill[black!80] (\tickDr, 2.1+\barH/2) circle (1.5pt);
\node[font=\small, black!80, anchor=east]
  at (\tickG-0.12, 2.1+\barH/2) {$-$};
\fill[black!80]
  (\tickEr, 1.8) rectangle (\tickCr, 1.8+\barH);
\fill[black!80] (\tickEr, 1.8+\barH/2) circle (1.5pt);
\fill[black!80] (\tickCr, 1.8+\barH/2) circle (1.5pt);
\node[font=\small, black!80, anchor=east]
  at (\tickEr-0.12, 1.8+\barH/2) {$+$};
\node[font=\small, black!80, anchor=west]
  at (\tickAr+0.2, 2.1+\barH/2)
  {$\mathrm{XRPH}_1(\partial M,c)$};
\draw[gray!40, dashed, line width=0.6pt]
  (\tickA, 1.55) -- (\tickAr, 1.55);
\fill[green!50!black]
  (\tickG, 1.3) rectangle (\tickRr, 1.3+\barH);
\fill[green!50!black] (\tickG,  1.3+\barH/2) circle (1.5pt);
\fill[green!50!black] (\tickRr, 1.3+\barH/2) circle (1.5pt);
\fill[green!50!black]
  (\tickEr, 1.08) rectangle (\tickCr, 1.08+\barH);
\fill[green!50!black] (\tickEr, 1.08+\barH/2) circle (1.5pt);
\fill[green!50!black] (\tickCr, 1.08+\barH/2) circle (1.5pt);
\node[font=\small, green!50!black, anchor=west]
  at (\tickAr+0.2, 1.19+\barH/2)
  {$\mathrm{XRPH}_1(L,c)$};

\end{tikzpicture}
    \makebox[\textwidth][l]{\small\textbf{(d)} Extended radial persistence modules in homology degree $1$ for $M$, $\partial M$ and $L$.}
 \caption{
 \emph{Location of $c$:} In this example $c$ is inside $M$. We can read this from the sign annotation of  $\mathrm{XRPH}_0(\partial M,c)$: the earliest birth for $\partial M$ has a negative sign. 
 This bar does not have the earliest death of all essential bars which means the point on $\partial M$ closest to $c$ is not on the exterior boundary of $M$.
\emph{Obtaining $\mathrm{XRPH}_0(M)$ from the signed annotation of $\mathrm{XRPH}_0(\partial M)$:} 
As $c\in M$ and $r_6$ is the maximum value attained in $M$,  we know that the only essential class in $\mathrm{XRPH}_0(M)$ is  the interval $[(0, \mathrm{Ord})), (r_6, \mathrm{Rel}))$.
$\mathrm{Ord}_0(M, \rho_c)$ has the intervals in $\mathrm{Ord}_0(\partial M,\rho_c)$ with a positive sign annotation on the birth.
\emph{Obtaining $\mathrm{XRPH}_1(M)$ from the signed annotation of $\mathrm{XRPH}_1(\partial M)$:} 
In this scenario we must trace a ``cascade'' of death values to find the degree-1 extended radial persistent homology from the annotated extended radial persistent homology of $\partial M$. The death cascade sequence is given in Definition \ref{def:cascadeSeq}. Geometrically, we can see that the pair $(M,M^{r_3})$ has the combinatorial structure of a punctured 2-sphere, implying that the component in $\mathrm{XRPH}_1(M,c)$ born at $(r_5,\mathrm{Ord})$ dies at $(r_3,\mathrm{Rel})$.
\emph{Obtaining $\mathrm{XRPH}_2(M)$:} As $c\in M$ and $r_1$ is the minimum value attained in $\partial M$, we know that $\mathrm{XRPH}_2(M)$ is an interval module over $[(r_1, \mathrm{Rel})), (0, \mathrm{Rel}))$.} 
\label{fig:four-panel2}
\end{figure}
\medskip 

We now set up notation used in the rest of this section. We work with radial distance functions $\rho_c:\R^n\to\R$ with a given centre $c\in \R^n$. For a subset $A\subset \R^n$ we write $\rho_c^A\coloneqq \rho_c\vert_A$ for the restriction of $\rho_c$ to $A$. Furthermore, for $\alpha\in \R$, we write $A_\alpha \coloneqq \left(\rho_c^A\right)^{-1}(-\infty,\alpha]$ for the sublevel set and $A^\alpha \coloneqq \left(\rho_c^A\right)^{-1}[\alpha,\infty)$ for the superlevel set.
We use the notation $S_k^A$ for the set of intervals in the interval decomposition of $\mathrm{XRPH}_k(A,c)$, so that 
$\mathrm{XRPH}_k(A,c)=\bigoplus_{[b,d)\in S_k^{A}}\mathcal{I}_{[b,d)}$. 
We write $\text{Crit}(\rho_c^M, k)$ to denote the set of critical points of $\rho_c^{\text{int}(M)}$ with index $k$. Furthermore, denote by $\text{Crit}(\rho_c^M,(k,\eta))$ the set of critical points of $\rho_c^{\partial M}$ with index $(k,\eta)$. For a non-degenerate critical point $p\in \partial M$, we call $\eta$ its \emph{sign}, and denote this by $\eta = \text{sgn}(\rho_c^M, p)$. 
Finally, suppose $$\text{XRPH}(M,c) = \text{Ord}_*(M,\rho_c)\oplus \text{Rel}_*(M,\rho_c) \oplus \text{Ess}_*(M,\rho_c)$$ is the radial extended persistence module of $M$; we define the following notation for the set of birth  parameters of intervals in $\text{XRPH}(M,c)$:
$$\mathfrak{b}_k^\text{ord}(M,\rho_c) \coloneqq \mathfrak{b}\left(\text{Ord}_k(M,\rho_c)\oplus \text{Ess}_k(M,\rho_c)\right),$$ and
$$\mathfrak{b}_k^\text{rel}(M,\rho_c)\coloneqq \mathfrak{b}\left(\text{Rel}_k(M,\rho_c)\right).$$ 
Similarly, we define the set of death parameters of intervals as 
$$\mathfrak{d}_k^\text{ord}(M,\rho_c)\coloneqq \mathfrak{d}\left(\text{Ord}_k(M,\rho_c)\right),$$ and 
$$\mathfrak{d}_k^\text{rel}(M,\rho_c) \coloneqq \mathfrak{d}\left(\text{Rel}_k(M,\rho_c)\oplus \text{Ess}_k(M,\rho_c)\right).$$ Furthermore, let $\mathfrak{b}_k(M,\rho_c) = \mathfrak{b}_k^\mathrm{ord}(M,\rho_c)\cup \mathfrak{b}_k^{\mathrm{rel}}(M,\rho_c)$ and $\mathfrak{d}_k(M,\rho_c) = \mathfrak{d}_k^\mathrm{ord}(M,\rho_c)\cup \mathfrak{d}_k^{\mathrm{rel}}(M,\rho_c)$. This notation is consistent with that in \cite{KT2022}.

Theorem \ref{breakpoint} is key to proving our main results. Its proof can be found in \cite[Corollary 4.14]{KT2022}.  

\begin{theorem}\label{breakpoint}
Let $(M,\partial M)$ be an $n$-manifold with boundary. If $f:M\to \R$ is a Morse function then for all $k\geq 0$, we have
$$\mathfrak{b}_k^\text{ord}(M,f) \cup \mathfrak{d}_{k-1}^\text{ord}(M,f) = \{(f(p),\text{Ord}) \;\vert\; p\in \text{Crit}(f,k)\cup \text{Crit}(f,(k,+1))\},$$ and
\begin{align*}
    \mathfrak{b}_k^\text{rel}(M,f) \cup \mathfrak{d}_{k-1}^\text{rel}(M,f) = \{(f(p),\text{Rel}) \;\vert\; p&\in \text{Crit}(f,n-k)\cup \text{Crit}(f,(n-k-1,-1))\}.
\end{align*}
\end{theorem}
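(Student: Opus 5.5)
The plan is the standard Morse-theoretic argument: track how the homology of the sublevel sets $M_t$, and then of the pairs $(M,M^t)$, changes as $t$ passes each critical value. Each critical value contributes exactly one birth or one death in exactly one degree. Since $f$ is Morse, the critical values are distinct and finite. Between consecutive critical values the inclusions $M_s\hookrightarrow M_t$ and $(M,M^t)\hookrightarrow(M,M^s)$ are homotopy equivalences. This follows from the usual gradient-flow deformation, adapted to manifolds with boundary by choosing a vector field tangent to $\partial M$ near the boundary, or by collar arguments in the PL case. Hence every birth or death parameter of the module lies at some $(f(p),\mathrm{Ord})$ or $(f(p),\mathrm{Rel})$ with $p$ critical.

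\textbf{Ordinary part.} Fix a critical value $a=f(p)$ and a small $\varepsilon$. The local normal forms identify $M_{a+\varepsilon}$, up to homotopy, with $M_{a-\varepsilon}$ with a cell attached.
\begin{enumerate}[(i)]
\item If $p$ is interior of index $k$, a $k$-cell is attached, by the classical Morse lemma.
\item If $p\in\partial M$ has index $(k,+1)$, the local model $x_1-x_2^2-\dots-x_{k+1}^2+\dots$ on the half-space shows that the sublevel set changes by attaching a $k$-handle along the boundary. Its core is the descending disc $\{x_1=0,\,x_{k+2}=\dots=x_n=0\}$.
\item If $p\in\partial M$ has index $(k,-1)$, the local sublevel set deformation retracts away from $p$: flow in the $+x_1$ direction pushes into the interior. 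So nothing changes up to homotopy.
\end{enumerate}
Attaching a $k$-cell changes the homology in exactly one degree, over $\Z_2$. Either it creates a new class in $H_k$, giving a birth in degree $k$, or it kills a class in $H_{k-1}$, giving a death in degree $k-1$. The long exact sequence of $(M_{a+\varepsilon},M_{a-\varepsilon})$ shows this, since the relative homology is $\Z_2$ concentrated in degree $k$. Collecting over critical points gives the first equality. The essential births are included because $\mathfrak{b}^{\mathrm{ord}}_k$ counts them.

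\textbf{Relative part.} I would apply the same argument to $-f$. The pairs $(M,M^t)$ are governed by the critical points of $-f$ on superlevel sets, and by excision the change between $(M,M^{a+\varepsilon})$ and $(M,M^{a-\varepsilon})$ is $H_*(M^{a-\varepsilon},M^{a+\varepsilon})$. Negating the normal forms changes the indices as follows.
\begin{enumerate}[(i)]
\item An interior index-$j$ point of $f$ becomes index $n-j$ for $-f$.
\item A boundary point of index $(j,\eta)$ for $f$ has local form $-\eta x_1+x_2^2+\dots+x_{j+1}^2-\dots$. For $-f$ this is index $(n-1-j,-\eta)$.
\end{enumerate}
Only $(+)$-points for $-f$ contribute, so these are the $(j,-1)$ points of $f$, and they attach cells of dimension $n-1-j$. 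Then $H_*(M^{a-\varepsilon},M^{a+\varepsilon})$ is $\Z_2$ in degree $n-j$ for interior points and in degree $n-1-j$ for boundary points. In the relative module, a rank change in $H_k(M,M^t)$ as $t$ decreases is a birth in degree $k$ or a death in degree $k-1$, again by the long exact sequence of the triple $(M,M^{a-\varepsilon},M^{a+\varepsilon})$. Setting $n-j=k$ or $n-1-j=k$ recovers the second equality. The essential deaths are included because $\mathfrak{d}^{\mathrm{rel}}$ counts them.

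\textbf{Main obstacle.} The delicate step is the boundary handle analysis. I need to show rigorously that $(-)$-points give homotopy-equivalent sublevel sets, and that $(+)$-points give exactly a $k$-cell attachment, in both the smooth and PL settings. I would first try an explicit deformation in the half-space chart, mirroring Milnor's proof of the handle attachment theorem. For the PL case I would use the standard link-based argument: the lower link of a $(-)$-point is contractible, while that of a $(+)$-point of index $k$ is a $(k-1)$-sphere. Because this is precisely \cite[Corollary 4.14]{KT2022}, I would ultimately cite that local analysis rather than reprove it.
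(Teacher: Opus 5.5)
Your ordinary half is fine, and your route is the natural one: cell attachments at interior and $(+)$-points, no change at $(-)$-points, and the relative half via $-f$. The paper gives no argument of its own, only the citation to \cite[Corollary 4.14]{KT2022}.

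The gap is in the relative bookkeeping. For $M^{a+\varepsilon}\subset M^{a-\varepsilon}\subset M$ the exact sequence of the triple is
\[
H_m(M^{a-\varepsilon},M^{a+\varepsilon})\xrightarrow{\;i_*\;}H_m(M,M^{a+\varepsilon})\xrightarrow{\;j_*\;}H_m(M,M^{a-\varepsilon})\xrightarrow{\;\partial\;}H_{m-1}(M^{a-\varepsilon},M^{a+\varepsilon}),
\]
where $j_*$ is the structure map from $(a+\varepsilon,\mathrm{Rel})$ to $(a-\varepsilon,\mathrm{Rel})$. Suppose the local group is $\Z_2$ in degree $d$. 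Either $i_*\neq 0$, so $j_*$ has a kernel in degree $d$ and there is a death in degree $d$. Or $i_*=0$, so $\partial$ gives $j_*$ a cokernel in degree $d+1$ and there is a birth in degree $d+1$. In the ordinary case the local group receives from the later group in the same degree and maps to the earlier group one degree down. Here it is the reverse, so the shift goes the other way. Your rule ``birth in degree $d$ or death in degree $d-1$'' is therefore false. For example, take the unit disk with $f=y$. The top point has index $(1,-1)$ and the local group is $\Z_2$ in degree $0$, but the event there is the death of the essential $H_0$-class at $(1,\mathrm{Rel})$, which your rule cannot produce.

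Carry the correct rule through your own local computation, with $d=n-j$ at interior index-$j$ points and $d=n-1-j$ at $(j,-1)$-points. You get
\[
\mathfrak{b}_k^{\mathrm{rel}}(M,f)\cup\mathfrak{d}_{k-1}^{\mathrm{rel}}(M,f)=\{(f(p),\mathrm{Rel})\mid p\in\mathrm{Crit}(f,n-k+1)\cup\mathrm{Crit}(f,(n-k,-1))\}.
\]
So the printed right-hand side actually describes $\mathfrak{b}_{k+1}^{\mathrm{rel}}\cup\mathfrak{d}_{k}^{\mathrm{rel}}$. Your mistaken rule cancels this shift exactly, which is why the argument seemed to land on the printed formula. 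The relative identity as printed is false, so no proof can establish it: in the disk example at $k=1$ the left side is $\{(1,\mathrm{Rel})\}$, while $\mathrm{Crit}(f,1)\cup\mathrm{Crit}(f,(0,-1))=\emptyset$.

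Two further checks agree with the corrected version. On a closed manifold, $H_k(M,M^t)\cong H^{n-k}(f^{-1}(-\infty,t))$, so relative $k$-classes are born at index-$(n-k+1)$ points. The paper's own Lemma~\ref{lem:births0} also fits only the corrected version: the degree-$n$ relative birth at $(r_{\min},\mathrm{Rel})$ occurs at a $(0,-1)$-point of whichever of $M$, $L$ contains $c$.

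The fix is to correct the statement, not your overall strategy. The paper's later theorems use this result only through the sign of the birth point, and they go through once $(n-k-1,-1)$ is replaced by $(n-k,-1)$.
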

Under reasonable genericity conditions on the radial function, every $\rho_c$ is  arbitrarily close to a Morse function with the equivalent extended persistent homology, i.e., the corresponding XRPHs have the same characterisation. By ``arbitrarily close'' we mean that there is an alternative choice $c'$ close to $c$ with $\rho_{c'}$ Morse.
The important quality we use is finitely many critical values and for them to be distinct. To remove this assumption about distinct critical values we could invoke stability. It can be easily shown that the interleaving distance between the modules for different centers $\rho_c$ and $\rho_{c'}$ is bounded by the distance $\|c-c'\|_2$. It will be convenient to keep the Morse assumption with its distinct critical values for most of the paper and only remove this assumption via a stability argument once the theory is fully established in the generic case.

A very important tool throughout is the Meyer-Vietoris long exact sequence which allows us to relate the homology of sublevel (superlevel) sets of $\partial M$ with that of sublevel (superlevel) sets of $M$. To that end, choose $R>0$ such that $M\subset B(c,R-2\delta)$ (for $\delta>0$ small), where $B(c,R-2\delta)$ is the open ball of radius $R-2\delta$ centered at $c\in\R^n$. 
Let $L \coloneqq \overline{B(\hat{c},R)}\backslash \text{int}\,M$, for some $\hat{c}$ with $0<\|c-\hat{c}\|_2<\delta$. Note that $\partial M = M\cap L$. Let $R^+=R+\|c-\hat{c}\|_2$ and $R^-=R-\|c-\hat{c}\|_2$; these are the maximum and minimum values of $\rho_c$ over $\partial B(\hat{c}, R)$. 

Since we assume $c$ is generic meaning $\rho_c$ is a Morse function on $M$, there exists some $\epsilon>0$ such that all critical values of $\rho_c^M$ are at least $\epsilon$ apart and that the largest critical value is strictly less than $R-\epsilon$. It follows that $ 0 \leq \inf(\rho_c(M)) < \sup(\rho_c(M)) < R-\epsilon$ since $\rho_c$ takes non-negative values.

For $s>0$, we consider the sublevel sets of $\rho_c$ restricted to the three subsets $M,\partial M$ and $L$ of $\R^n$, namely $M_s$, $(\partial M)_s$ and $L_s$ respectively. Then we have $(\partial M)_s=M_s\cap L_s$ and $M_s\cup L_s = \rho_c^{-1}(-\infty, s]\cap \overline{B(c,R)}=\overline{B(c,s)}$ and the Mayer-Vietoris long exact sequence (LES) is
\begin{align}\label{LESord}
\begin{split}
    \cdots\overset{}{\longrightarrow}H_{k+1}(M_s\cup L_s)\longrightarrow H_k((\partial M)_s)\overset{}{\longrightarrow} H_k(M_s)\oplus H_k(L_s)\overset{}{\longrightarrow} \\
   H_k(M_s\cup L_s)\overset{}{\longrightarrow}
    \cdots\longrightarrow H_0(M_s\cup L_s)\longrightarrow 0.
\end{split}
\end{align}
There is also a relative version of the Mayer-Vietoris long exact sequence: 
\begin{align}\label{LESrel}
\begin{split}
    \cdots\overset{}{\longrightarrow}H_{k+1}(M\cup L, M^s\cup L^s)\longrightarrow H_k((\partial M), (\partial M)^s)\overset{}{\longrightarrow} H_k(M,M^s)\oplus H_k(L,L^s)\\ \overset{}{\longrightarrow}
   H_k(M\cup L, M^s\cup L^s)\overset{}{\longrightarrow}
    \cdots\longrightarrow H_0(M\cup L, M^s\cup L^s)\longrightarrow 0.
\end{split} \end{align} 

Observe that $H_k(M_s\cup L_s) = 0$ for all $k>0$. LES (\ref{LESord}) implies $H_k((\partial M)_s)\cong H_k(M_s)\oplus H_k(L_s)$. Similar relations can be shown from the relative LES (\ref{LESrel}). Together, they give a neat matching (Theorem \ref{thm:k}) between $\mathrm{XRPH}_k(M,c)$ and $\mathrm{XRPH}_k(\partial M,c)$ for the case $0<k<n-1$.

Before proving Theorem~\ref{thm:k} we state a simple relationship between critical points in $\partial M$, $M$ and $L$. 
\begin{lemma}\label{lem:swaps}
    If $p\in \partial M$ is a non-degenerate critical point in $\mathrm{Crit}(\rho_c^M,(k,\eta))$, then $p$ is a critical point in $\mathrm{Crit}(\rho_c^L,(k,-\eta)).$
\end{lemma}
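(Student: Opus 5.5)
The plan is to work directly from the local normal form in the definition of boundary critical points. Take $p\in\partial M$ non-degenerate with index $(k,\eta)$ for $\rho_c^M$. By definition there is a chart $(U,\phi)$ with $\phi(p)=0$, mapping a neighbourhood of $p$ in $M$ onto a neighbourhood of $0$ in the half-space $\{x_1\ge 0\}$, with $\partial M$ sent to $\{x_1=0\}$. In these coordinates
$$\rho_c\circ\phi^{-1}(x)=\rho_c(p)+\eta x_1-x_2^2-\cdots-x_{k+1}^2+x_{k+2}^2+\cdots+x_n^2 .$$
(In the PL case the squares are replaced by absolute values.)

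The first step is to extend this chart across $\partial M$ so that it covers a full neighbourhood of $p$ in $\R^n$, with $L$ corresponding to $\{x_1\le 0\}$. Near $p$, the set $L=\overline{B(\hat c,R)}\setminus\mathrm{int}\,M$ is simply the closure of the complement of $M$, because $M\subset B(c,R-2\delta)$ keeps $p$ away from $\partial B(\hat c,R)$. The same fact shows $p\in\partial L$ and that $L$ is a manifold with boundary near $p$. We then need a chart $\Phi$ on an open set $W\ni p$ of $\R^n$ that agrees with $\phi$ on $W\cap M$ and in which $\rho_c\circ\Phi^{-1}(x)$ is given by the same formula for all $x$ near $0$, including $x_1<0$.

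In the smooth setting the plan is to build $\Phi$ from the function itself rather than by abstract extension. The condition $\eta=\pm1$ says that $d\rho_c(p)$ is nonzero in the direction normal to $\partial M$, while $p$ is a non-degenerate critical point of $\rho_c|_{\partial M}$. Choose a collar coordinate $t$ that is positive on $\mathrm{int}\,M$, negative on the exterior, and zero on $\partial M$. Set $u=\rho_c-\rho_c(p)$, so that $\partial_t u(p)\neq 0$. Apply the Morse lemma with parameters (the splitting lemma) to $u$ restricted to the level sets of $t$. This gives coordinates $(x_1,\dots,x_n)$ on a full ball around $p$ in which $u=\eta x_1+Q(x_2,\dots,x_n)$, with $x_1$ having the same sign as $t$. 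Here $Q$ is the quadratic form of index $k$. This argument reproves the existence of the one-sided chart as a by-product.

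In the PL setting we extend linearly across simplices of a triangulation of a neighbourhood of $p$. Equivalently, we observe that a PL manifold $M$ with boundary whose complement's closure is also a PL manifold glues along $\partial M$, giving a PL chart in which the given half-space formula extends to both sides.

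Given this two-sided chart, the final step is short. Compose $\Phi$ with the reflection $\sigma(x_1,\dots,x_n)=(-x_1,x_2,\dots,x_n)$. The chart $\sigma\circ\Phi$ maps $L$ near $p$ onto $\{x_1\ge0\}$ and $\partial L=\partial M$ onto $\{x_1=0\}$. In it,
$$\rho_c\circ(\sigma\circ\Phi)^{-1}(x)=\rho_c(p)-\eta x_1-x_2^2-\cdots-x_{k+1}^2+x_{k+2}^2+\cdots+x_n^2 .$$
This is exactly the normal form of a non-degenerate boundary critical point of $\rho_c^L$ with index $(k,-\eta)$. Intuitively, the sign records whether $\rho_c$ increases into the manifold, and $L$ lies on the opposite side of the shared boundary.

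The main obstacle is the two-sided chart. The definition only provides a half-space chart for $M$, and an arbitrary extension across $\partial M$ need not preserve the normal form on the $L$ side. Writing $\rho_c$ in normal form on a full neighbourhood is what makes the argument work, via the parametric Morse lemma in the smooth case and gluing of linear pieces in the PL case. For the smooth case I would try the parametric Morse lemma first. If that proves awkward, a fallback is to check the intrinsic characterisation directly: $p$ is critical for $\rho_c|_{\partial M}$ with Hessian index $k$, and the gradient $\nabla\rho_c(p)$ points into $M$ exactly when $\eta=+1$. Both properties are symmetric under swapping $M$ and $L$ except that the inward normal reverses, which gives the sign flip.
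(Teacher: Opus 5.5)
\textbf{Smooth case: correct, and a different route from the paper.} The paper gives no proof of this lemma. It calls it a ``simple relationship'', and the implicit reasoning is your closing intuition: $\rho_c^{\partial M}=\rho_c^{\partial L}$ fixes the tangential index $k$, while $M$ and $L$ have opposite inward normals at $p$. Your smooth argument is correct and makes this rigorous against the paper's chart-based definition. Note $p\neq c$, since $\rho_c$ restricted to $\partial M$ is not differentiable at $c$. So $\rho_c$ is smooth on a full ball around $p$, and one two-sided normal-form chart followed by the reflection $x_1\mapsto -x_1$ gives the chart for $L$ immediately.

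The detail you should write out is how the parametric Morse lemma yields exactly $\eta x_1$:
\begin{itemize}
\item follow the fibrewise critical point $y^*(t)$ of $u(t,\cdot)$ by the implicit function theorem;
\item write $u=g(t)+Q(z)$ with $g(t)=u(t,y^*(t))$;
\item note $g(0)=0$ and $g'(0)=\partial_t u(p)$, which has sign $\eta$;
\item take $x_1=\eta g(t)$, which is a valid coordinate with the sign of $t$.
\end{itemize}
Your fallback is not really an independent route. Under the paper's definitions, passing from the intrinsic conditions back to a normal-form chart is this same construction, done on one side. Your route buys an honest check against the definitions; the paper's buys brevity.

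\textbf{PL case: a genuine gap.} Gluing PL charts of $M$ and $L$ along $\partial M$ does give a chart of a neighbourhood of $p$ in $\mathbb{R}^n$. But nothing in the gluing forces the given function on the $L$ side to continue the half-space formula. ``Extending linearly across simplices'' would replace $\rho_c^L$ by a different function. A PL function need not be differentiable across $\partial M$, so its two one-sided normal slopes are unrelated.

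For example, take $M=\{x_1\ge 0\}$ and $L=\{x_1\le 0\}$ near $p=0$ in $\mathbb{R}^2$, with $f=|x_1|+|x_2|$. Then $p$ has index $(0,+1)$ for both $f|_M$ and $f|_L$. It cannot have type $(0,-1)$ for $f|_L$, because $f|_L\ge f(p)$. So the PL analogue is false without further hypotheses, and an argument that never relates the two sides cannot prove it.

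What your smooth proof actually uses is a single differentiable function across $\partial M$ with $d\rho_c(p)\neq 0$. In the PL setting this must be replaced by an explicit assumption, for instance that $p$ is a PL-regular point of the function on a full neighbourhood in $\mathbb{R}^n$. That is not automatic for a PL interpolant of $\rho_c$, and you would still have to argue from it. The paper uses the lemma only in the theoretical development of Section~\ref{sec:proof} (e.g.\ the proof of Theorem~\ref{thm:k}), which it carries out with smooth Morse theory. The simplest fix is therefore to restrict your proof to the smooth case.
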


\begin{theorem}\label{thm:k}
Let $0<k< n-1$. 
If $$\text{XRPH}_k(\partial M,\rho_c) = \bigoplus_{[b_i,d_i)\in S_k^{\partial M}} \mathcal{I}_{[b_i,d_i)}.$$ Let $J_k^M\subseteq S_k^{\partial M}$ be the subset of intervals $[b_i,d_i)$ such that  either $b_i = (\rho_c(p),\text{Ord})$ with $p\in\text{Crit}(\rho_c^M,(k,+1))$, or $b_i = (\rho_c(p),\text{Rel})$ with $p\in\text{Crit}(\rho_c^M,(n-k-1,-1))\}.$ 
Then
$$\text{XRPH}_k(M,c) = \bigoplus_{[b_i,d_i)\in J_k^M} \mathcal{I}_{[b_i,d_i)}.$$ 
\end{theorem}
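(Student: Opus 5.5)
We will show that, for $0<k<n-1$, the Mayer--Vietoris sequences (\ref{LESord}) and (\ref{LESrel}) split $\mathrm{XRPH}_k(\partial M,c)$ as a direct sum of persistence modules, $\mathrm{XRPH}_k(\partial M,c)\cong \mathrm{XRPH}_k(M,c)\oplus \mathrm{XRPH}_k(L,c)$. We then use Theorem~\ref{breakpoint} together with Lemma~\ref{lem:swaps} to decide which summand each interval of $S_k^{\partial M}$ belongs to.

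\textbf{Step 1: the ordinary part.} For each $s$, the union $M_s\cup L_s$ is a closed ball $\overline{B(c,s)}$ (or empty), so it is contractible or empty. Hence $H_{k+1}(M_s\cup L_s)=H_k(M_s\cup L_s)=0$ for $k\ge 1$. The map $H_k((\partial M)_s)\to H_k(M_s)\oplus H_k(L_s)$ is therefore an isomorphism. This map is induced by inclusions, so it commutes with the structure maps for $s\le t$, giving an isomorphism of persistence modules on the $\mathcal O$ part.

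The ball $B(\hat c,R)$ is slightly off-centre, so $\rho_c$ restricted to $\partial B(\hat c,R)$ has its values in $[R^-,R^+]$. For $s$ in that range, $M_s\cup L_s$ is a ball intersected with a ball, which is still convex. For $s\ge R^+$ the union is all of $\overline{B(\hat c,R)}$, again contractible.

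\textbf{Step 2: the relative part.} $M\cup L=\overline{B(\hat c,R)}$ is contractible, and $M^s\cup L^s=\overline{B(\hat c,R)}\setminus B(c,s)$. For $0<s<R^-$ this is homotopy equivalent to $S^{n-1}$; for $s>R^+$ it is empty; for $s\in[R^-,R^+]$ it is a contractible cap. So $H_j(M\cup L,M^s\cup L^s)$ vanishes except possibly in degree $j=n$. Since $k+1\le n-1<n$, the terms $H_{k+1}$ and $H_k$ of the pair vanish. The relative sequence (\ref{LESrel}) then gives a natural isomorphism $H_k(\partial M,(\partial M)^s)\cong H_k(M,M^s)\oplus H_k(L,L^s)$.

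We must also check that the connecting map from the $\mathcal O$ part to the $\mathcal R$ part, at $(R,\mathrm{Ord})\le(\cdot,\mathrm{Rel})$, is compatible with the splitting. It is, because every map involved is induced by inclusions of pairs. Combining Steps 1 and 2, we get an isomorphism of $\Theta$-indexed modules $\mathrm{XRPH}_k(\partial M,c)\cong\mathrm{XRPH}_k(M,c)\oplus\mathrm{XRPH}_k(L,c)$. By uniqueness of interval decompositions, $S_k^{\partial M}=S_k^M\sqcup S_k^L$ as multisets.

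\textbf{Step 3: sorting the intervals.} We now identify which intervals of $S_k^{\partial M}$ go to $M$. By Theorem~\ref{breakpoint}, every ordinary birth of $\mathrm{XRPH}_k(M,c)$ occurs at $\rho_c(p)$ with $p$ in $\mathrm{Crit}(\rho_c^M,k)\cup\mathrm{Crit}(\rho_c^M,(k,+1))$.

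The interior critical points of $\rho_c$ on $M$ are at most the centre $c$, which has index $0$. Since $k>0$, no interior point contributes, so these births come from points in $\mathrm{Crit}(\rho_c^M,(k,+1))$. By Lemma~\ref{lem:swaps}, such a $p$ lies in $\mathrm{Crit}(\rho_c^L,(k,-1))$, so it is not an ordinary degree-$k$ birth/death location for $L$ in the first formula.

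The main obstacle is the reverse direction: a $(k,-1)$ point of $M$ becomes a $(k,+1)$ point of $L$, and it could a priori be a birth for $L$ or a death in degree $k-1$. The argument must use the identity of critical values on both sides. Each critical value of $\rho_c^{\partial M}$ is a breakpoint of $\mathrm{XRPH}_k(\partial M)$ at most once, by distinctness of critical values. A birth at $(\rho_c(p),\mathrm{Ord})$ in $S_k^{\partial M}$ must be a birth in $S_k^M$ or in $S_k^L$, and the sign of $p$ decides which by Theorem~\ref{breakpoint}. Interior critical points of $L$ could interfere: $\hat c$ lies outside $L$, and the points of $\partial B(\hat c,R)$ have $\rho_c$-values of at least $R^-$, far above every critical value of $M$. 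So these contribute only at levels $\ge R^-$, and they cannot be births of intervals in $S_k^{\partial M}$.

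The relative births are handled in the same way using the second formula of Theorem~\ref{breakpoint}. Here the index-$(n-k-1,-1)$ points go to $M$ and the $(n-k-1,+1)$ points go to $L$, again by Lemma~\ref{lem:swaps}. Since the birth of an interval determines whether it lies in $S_k^M$, we get $S_k^M=J_k^M$, which proves the theorem.
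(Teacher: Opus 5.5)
Your proposal is correct and follows the paper's proof. The Mayer--Vietoris sequences (\ref{LESord}) and (\ref{LESrel}) give a pointwise, hence module, isomorphism $\mathrm{XRPH}_k(\partial M,c)\cong\mathrm{XRPH}_k(M,c)\oplus\mathrm{XRPH}_k(L,c)$, and Theorem~\ref{breakpoint} with Lemma~\ref{lem:swaps} then assigns each interval to $M$ or to $L$ by the sign at its birth point. One small correction: the only interior critical point of $\rho_c^L$ is $c$ itself, present when $c\notin M$; it has nothing to do with $\hat c$ or the outer sphere, and it is harmless for exactly the reason you gave for $M$ --- it has index $0$, and its value $0$ lies below every critical value on $\partial M$.
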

\begin{proof}
For $0<k<n-1$, since $\partial M\subseteq M$ and $\partial M\subseteq L$, there is an induced morphism on persistence modules:

$$\phi_k: \text{XRPH}_k(\partial M,\rho_c)\to\text{XRPH}_k(M,c)\oplus\text{XRPH}_k(L,\rho_c).$$ 

Let $\phi_{(t,\mathrm{Ord})}$ and $\phi_{(t,\mathrm{Rel})}$ be the restrictions of $\phi_k$ to the vector space at parameter $(t, \mathrm{Ord})$ and $(t,\mathrm{Rel})$ respectively. Then LES (\ref{LESord}) and (\ref{LESrel}) above show that $\phi_{(t,\text{Ord})}$ and $\phi_{(t,\text{Rel})}$ are both isomorphisms for all $t$. Since the restriction to each parameter in $\Theta$ is an isomorphism, we conclude that $\phi_k$ is an isomorphism of persistence modules.

As $\phi_k$ is an isomorphism we can infer that the intervals of $\text{XRPH}_k(M,c)$ are a subset of the intervals of  $\text{XRPH}_k(\partial M,\rho_c)$. The identification can be done using Theorem \ref{breakpoint} together with Lemma \ref{lem:swaps}.

Explicitly, if $b_i\in \mathfrak{b}_k(M,c)$ is a critical value achieved at $p\in \partial M$, then Theorem \ref{breakpoint} tells us that $p\in \mathrm{Crit}(\rho_c^M,(k,+1))\cup \mathrm{Crit}(\rho_c^M,(n-k-1,-1))$. On the other hand, if $b_i\in \mathfrak{b}_k(L,\rho_c)$ is a critical value achieved at $q\in \partial M$, then applying Theorem \ref{breakpoint} to $L$, it must be that $q\in \mathrm{Crit}(\rho_c^L,(k,+1))\cup \mathrm{Crit}(\rho_c^L,(n-k-1,-1))$. Now Lemma \ref{lem:swaps} tells us that $q$ is a critical point of $\rho_c^M$ with opposite sign. So we have $q\in \mathrm{Crit}(\rho_c^M,(k,-1))\cup \mathrm{Crit}(\rho_c^L,(n-k-1,+1))$ instead.
\end{proof}

Theorem \ref{thm:k} covers all the matchings except for the cases of $k=0, n-1$ and $n$, which we will now consider. We start by establishing a matching (Lemma \ref{lem:births0}) between birth and death parameters for $M$ and $L$ and those for $\partial M$. Before that, we introduce some notations.

Let $r_{\min} \coloneqq \min_{p\in \partial M}\rho_c(p)$ and $r_{\max}\coloneqq \max_{p\in \partial M}\rho_c(p)$ be the minimum and maximum values of $\rho_c$ achieved on the boundary $\partial M$. We will further differentiate the different components of $\partial M$.

\begin{defn}[Interior and exterior boundary component of $(M,\partial M)$]\label{bndComp}
Suppose $M$ is connected. Let $A$ be a connected component of $\partial M$. Then $A$ is an \emph{interior boundary component} if $M\backslash A$ is contained in the unbounded connected component of $\R^n\backslash A$. We call $A$ an \emph{exterior boundary component} if $M\backslash A$ is contained in the bounded component of $\R^n\backslash A$. 
\end{defn}

Since $M$ is connected, it has exactly one exterior boundary component. We further define $r_{\min}^\mathrm{ext}\coloneqq \min_{p\in A^{\mathrm{ext}}}\rho_c(p),$ and $r_{\max}\coloneqq \max_{p\in A^\mathrm{ext}}\rho_c(p)$ to be the minimum and maximum values achieved on the exterior boundary $A^{\mathrm{ext}}$ of $M$.
Moreover, observe that $r_{\max}$ can only be achieved on $A^\mathrm{ext}$ and so $r_{\max}^{\mathrm{ext}} = r_{\max}$. 

\begin{lemma}\label{lem:births0}

We have equality of the following disjoint unions:
\begin{align*}
\mathfrak{b}_0(M,\rho_c)\sqcup  \mathfrak{b}_0(L,\rho_c)&= \mathfrak{b}_0(\partial M,\rho_c)\sqcup \{(0,\text{Ord})\};\\
\mathfrak{d}_0(M,\rho_c)\sqcup  \mathfrak{d}_0(L,\rho_c) &= \mathfrak{d}_0(\partial M,\rho_c)\sqcup \{(R^+,\text{Rel})\};\\
\mathfrak{b}_{n-1}(M,c)\sqcup  \mathfrak{b}_{n-1}(L,\rho_c)&= \mathfrak{b}_{n-1}(\partial M,\rho_c);\\
\mathfrak{d}_{n-1}(M,\rho_c)\sqcup  \mathfrak{d}_{n-1}(L,\rho_c)&= \big(\mathfrak{d}_{n-1}(\partial M,\rho_c)\sqcup \{(R^-,\text{Rel})\} \big)\backslash \{(r_{\min},\text{Rel})\};\\
\mathfrak{b}_n(M,c)\sqcup  \mathfrak{b}_n(L,\rho_c)&=\{(r_{\min},\text{Rel})\} ;\\
\mathfrak{d}_n(M,\rho_c)\sqcup  \mathfrak{d}_n(L,\rho_c) &= \{(0,\text{Rel})\};
\end{align*}

\end{lemma}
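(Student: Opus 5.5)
The plan is to combine Theorem \ref{breakpoint}, applied separately to $M$, $L$ and $\partial M$, with Lemma \ref{lem:swaps}. The key observation is that each boundary critical point contributes exactly once, to whichever of $M$ or $L$ its sign selects. The extra terms come from the critical points of $\rho_c$ that do not lie on $\partial M$.

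\emph{Setting up.} Since $\partial M$ is a closed $(n-1)$-manifold, Theorem \ref{breakpoint} (with empty boundary) says that the births in $\mathfrak{b}_k(\partial M,\rho_c)$ are the values $(\rho_c(p),\mathrm{Ord})$ for $p$ of index $k$, together with the values $(\rho_c(p),\mathrm{Rel})$ for $p$ of index $n-1-k$. Each such $p$ gives either a birth or a death in a given degree. The paired death lies in degree $k-1$ for $\mathrm{Ord}$ and degree $k-1$ for $\mathrm{Rel}$ in the shifted indexing. A boundary critical point of $\rho_c^{\partial M}$ of index $k$ is a boundary critical point of $\rho_c^M$ of index $(k,\eta)$ for a unique $\eta$, and by Lemma \ref{lem:swaps} it is of index $(k,-\eta)$ for $L$.

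\emph{Critical points off $\partial M$.}
\begin{itemize}
\item Interior critical points of $\rho_c$ on $\mathrm{int}\,M$ or $\mathrm{int}\,L$: the only one is $c$ itself, of index $0$ and value $0$. It lies in exactly one of $\mathrm{int}\,M$ or $\mathrm{int}\,L$, since $c$ is generic and hence not on $\partial M$.
\item The outer sphere $\partial B(\hat c,R)\subset\partial L$: here $\rho_c$ restricted to this sphere has exactly two critical points, its minimum $R^-$ and maximum $R^+$. The perturbation $\hat c\neq c$ is precisely what makes them nondegenerate. At both points the outward direction of $L$ points away from $c$.
\end{itemize}
I would check the signs in local coordinates. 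At $R^+$, $\rho_c$ decreases going into $L$, so the point is of type $(n-1,-1)$. At $R^-$, moving inward into $L$ points away from $c$, so $\rho_c$ decreases into $L$ and the point is of type $(0,-1)$. Getting these sign and index conventions right is the main obstacle, and I would verify them against the figure: $R^-$ gives $(R^-,\mathrm{Rel})$ in degree $n-1$, and $R^+$ gives $(R^+,\mathrm{Rel})$ in degree $0$.

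\emph{Assembling the six identities.} For each degree $k\in\{0,n-1,n\}$ I would collect the critical values feeding $\mathfrak{b}_k\cup\mathfrak{d}_{k-1}$ for $M$ and $L$, using Theorem \ref{breakpoint}. Every critical point of $\partial M$ appears exactly once, by the sign dichotomy. The extra contributions are:
\begin{itemize}
\item $c$ contributes $(0,\mathrm{Ord})$ to $\mathfrak{b}_0$.
\item $(0,-1)$ at $R^-$ and $(n-1,-1)$ at $R^+$ feed into the $\mathrm{Rel}$ formulas with $n-k-1=0$ and $n-k-1=n-1$. These give $k=n-1$, a birth or death in degree $n-1$ or $n-2$, and $k=0$, a death in degree $-1$, which is impossible; hence the values are placed as deaths.
\end{itemize}

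\emph{Separating births from deaths.} Theorem \ref{breakpoint} only yields unions, so a further step is needed. I would use the long exact sequences (\ref{LESord}) and (\ref{LESrel}), noting that $M_s\cup L_s$ is a ball and the relative pair $(\overline{B},\overline{B}^s)$ has homology only in degree $n$ and degree $0$ at the ends. This shows the isomorphism of Theorem \ref{thm:k} persists in degrees $0$ and $n-1$ except at the parameters where the ball pair changes. In the ordinary part $H_0$ of the ball appears at $0$, giving the extra birth $(0,\mathrm{Ord})$. In the relative part $H_n(B,B^s)$ is nonzero for $s$ between $0$ and $r_{\min}$ on $\partial M$; this connecting map is what removes $(r_{\min},\mathrm{Rel})$ from the degree $n-1$ deaths and creates the degree $n$ class born at $(r_{\min},\mathrm{Rel})$ and dying at $(0,\mathrm{Rel})$. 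Meanwhile $R^\pm$ account for the degree $n-1$ death $R^-$ and the degree $0$ death $R^+$ of $L$ (the essential class of $L$).

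Disjointness follows because the critical values are distinct. The remaining bookkeeping is to check that, in each degree, the boundary values split by sign with no overlaps.
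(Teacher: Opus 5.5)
The proposal has a genuine gap: neither half of the plan establishes the relative identities.

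\textbf{The critical-point bookkeeping does not give the degrees the lemma asserts.} With Theorem~\ref{breakpoint} as quoted, the $(0,-1)$ point of $L$ at $R^-$ lands in $\mathfrak{b}^{\mathrm{rel}}_{n-1}\cup\mathfrak{d}^{\mathrm{rel}}_{n-2}$, not in $\mathfrak{d}^{\mathrm{rel}}_{n-1}$. The $(n-1,-1)$ point at $R^+$ lands in $\mathfrak{b}^{\mathrm{rel}}_{0}\cup\mathfrak{d}^{\mathrm{rel}}_{-1}$, and both sets are empty because $\mathrm{Rel}_0=0$. Concluding ``a death in degree $-1$ is impossible, hence a death'' is a non sequitur, and checking against the figure is not an argument. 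The same mismatch affects $c$, which must give $(0,\mathrm{Rel})\in\mathfrak{d}_n$, and the point realising $r_{\min}$, which must give the degree-$n$ birth. A direct cell-attachment computation on $M^t$ suggests the relative line, with these index conventions, should have $\mathfrak{b}^{\mathrm{rel}}_{k+1}\cup\mathfrak{d}^{\mathrm{rel}}_{k}$ on the left. That version would place $R^\pm$, $r_{\min}$ and $c$ where you want them, but you would have to prove it, and it still only controls unions of births and deaths.

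\textbf{The separation step is asserted rather than proved, and two of its claims are false.} First, the comparison maps are not isomorphisms away from a few parameters:
\begin{itemize}
\item In ordinary degree $0$, the map $H_0((\partial M)_s)\to H_0(M_s)\oplus H_0(L_s)$ is injective with one-dimensional cokernel for every $s\ge 0$.
\item In relative degree $n-1$, the map $H_{n-1}(\partial M,(\partial M)^s)\to H_{n-1}(M,M^s)\oplus H_{n-1}(L,L^s)$ has one-dimensional kernel for every $s\in(r_{\min},R^-)$.
\end{itemize}
Second, $H_n(M\cup L,M^s\cup L^s)$ is nonzero for all $0<s<R^-$, not only on $(0,r_{\min}]$. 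It is $H_n(X,X^s)$, for whichever of $M,L$ contains $c$, that is nonzero exactly on $(0,r_{\min}]$. The gap between these two ranges is precisely what removes $(r_{\min},\mathrm{Rel})$ and inserts $(R^-,\mathrm{Rel})$ in the fourth identity. Your description produces neither.

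\textbf{What is missing is the paper's mechanism.} Take dimensions in (\ref{LESord}), (\ref{LESrel}) and (\ref{LESYAY3}) to get Betti-number identities valid at every parameter:
\begin{itemize}
\item $\beta_0(M_s)+\beta_0(L_s)=\beta_0((\partial M)_s)+1$ for $s\ge 0$;
\item equality in relative degree $0$ for $s<R^+$, with a defect of one beyond $R^+$;
\item equality in ordinary degree $n-1$;
\item $\beta_{n-1}(\partial M,(\partial M)^s)=\beta_{n-1}(M,M^s)+\beta_{n-1}(L,L^s)+1$ on $(r_{\min},R^-)$, with equality elsewhere;
\item the explicit computation of $H_n(X,X^s)$ for $X\ni c$ and $X\not\ni c$.
\end{itemize}
The critical values are distinct, and each causes a single $\pm1$ jump in a single degree. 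Comparing jumps therefore yields all six identities, with the correction terms appearing exactly at $0$, $R^+$, $r_{\min}$ and $R^-$. Theorem~\ref{breakpoint} is not needed at all.
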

\begin{proof}

Consider the following part of LES (\ref{LESord}):
$$0\to H_0((\partial M)_s)\to H_0(M_s)\oplus H_0(L_s) \to H_0(M_s\cup L_s)\to 0.$$ As $H_0(M_s\cup L_s) = \Z_2$ for $s\geq 0$ we have that 
\begin{align}\label{rel2}\beta_0(M_s)+\beta_0(L_s) = \beta_0(M_s\cup L_s) +\beta_0((\partial M)_s) = \beta_0((\partial M)_s) + 1.\end{align}

Similarly, consider the following part of LES (\ref{LESrel}):
$$0\to H_0((\partial M),(\partial M)^s)\to H_0(M,M^s)\oplus H_0(L,L^s) \to H_0(M\cup L,M^s\cup L^s)\to 0.$$
Since $H_0(M\cup L,M^{R^+ +\epsilon}\cup L^{R^+ +\epsilon}) = H_0(M\cup L) = \Z_2$ and $H_0(M\cup L,M^{R^+-\epsilon}\cup L^{R^+-\epsilon})  = 0,$ and $(\partial M)^s = M^s=\emptyset$ for $R^+ -\epsilon\leq s\leq R^+ +\epsilon,$ it follows that
$$\beta_0(\partial M)=\beta_0(M)+\beta_0(L,L^{R^+ +\epsilon})-1,$$ and 
$$\beta_0(\partial M)=\beta_0(M)+\beta_0(L,L^{R^+ -\epsilon}).$$ Taking the difference of the two equations, we get
$$\beta_0(L,L^{R^+ -\epsilon})-\beta_0(L,L^{R^+ +\epsilon})=-1.$$ 

More generally $\beta_0(\partial M, (\partial M)^s)=\beta_0(M, M^s)+\beta_0(L,L^{s})$ for $0<s<R^+$. 

As the critical values are all at distinct parameters we can use this relation between the Betti numbers to relate the birth parameters and death parameters.
We have $$\mathfrak{b}_0(\partial M,\rho_c)\sqcup \{(0,\text{Ord})\} = \mathfrak{b}_0(M,c)\sqcup \mathfrak{b}_0(L,\rho_c)$$ 
and 
$$\mathfrak{d}_0(\partial M,\rho_c)\sqcup \{(R^+,\text{Rel})\} = \mathfrak{d}_0(M,c)\sqcup \mathfrak{d}_0(L,\rho_c).$$ 

Similarly, by considering the following part of LES (\ref{LESord}) for $s\ge 0$:
$$0\to H_{n-1}((\partial M)_s)\to H_{n-1}(M_s)\oplus H_{n-1}(L_s) \to 0,$$ where $H_{n-1}(M_s\cup L_s) = 0,$ we can show that 
$$\mathfrak{b}_{n-1}(M,c)\sqcup  \mathfrak{b}_{n-1}(L,\rho_c)= \mathfrak{b}_{n-1}(\partial M,\rho_c).$$

Now since $\partial M$ is an $(n-1)$-manifold, we have 
$$H_n(\partial M,(\partial M)^s) = 0,$$ for all $s\ge 0$. So the relevant part of long exact sequence (\ref{LESrel}) is
\begin{align}\label{LESYAY3}
\begin{split}
   0 \overset{}{\longrightarrow} H_n(M,M^s)\oplus H_n(L,L^s)\hookrightarrow H_n(M\cup L,M^s\cup L^s) \hspace{3cm}\\
  \hspace{2cm} \longrightarrow H_{n-1}((\partial M), (\partial M)^s)
  \twoheadrightarrow H_{n-1}(M,M^s)\oplus H_{n-1}(L,L^s)\overset{}{\longrightarrow} 0.
\end{split}
\end{align} 

Let $X$ and $Y$ be either $M$ or $L$ so $c\in X$ and $c\notin Y$. Then we have 
\begin{align}\label{step1}H_n(X,X^s)=\begin{cases} 0,&  r_{min} < s \\ \Z_2, & 0<s\leq r_{min}\\ 0, & s = 0\end{cases},\end{align} and 
\begin{align}\label{step2}H_n(Y,Y^s)=0, \quad \text{for all $s\ge 0$}. \end{align} 
This gives $$\mathfrak{b}_n(M,\rho_c)\sqcup  \mathfrak{b}_n(L,\rho_c)=\{(r_{\min},\text{Rel})\} ,\;
\mathfrak{d}_n(M,\rho_c)\sqcup  \mathfrak{d}_n(L,\rho_c) = \{(0,\text{Rel})\}.$$

Now when $0\leq s\leq r_{\min}$, note that $(\partial M)^s = \partial M$ by the definition of $r_{\min}$. Then the long exact sequence (\ref{LESYAY3}) tells us that 
\begin{equation}\label{useful1}
\beta_{n-1}(\partial M,(\partial M)^s) = \beta_{n-1}(M,M^s)+\beta_{n-1}(L,L^s)=0.\end{equation} When $r_{\min}< s< R^-$, on the other hand, we have $H_{n}(M,M^s)\oplus H_{n}(L,L^s)=0,$ and hence
\begin{equation}\label{useful2}\beta_{n-1}(\partial M,(\partial M)^s) = \beta_{n-1}(M,M^s)+\beta_{n-1}(L,L^s)+1.
\end{equation}
Combining (\ref{useful1}) and (\ref{useful2}), we have that $(r_{\min},\mathrm{Rel})\in \mathfrak{d}_{n-1}(\partial M,\rho_c).$ Since $\rho_c^M$ is Morse, we have that $(r_{\min},\mathrm{Rel})\not\in \mathfrak{d}_{n-1}(M,\rho_c)\sqcup \mathfrak{d}_{n-1}(L\rho_c)$. 
Finally, note that $(R^-,\mathrm{Rel})\in \mathfrak{d}_{n-1}(L,\rho_c)$ and $(R^-,\mathrm{Rel})\notin \mathfrak{d}_{n-1}(\partial M,\rho_c)$, a similar argument as above shows that we have that $$\mathfrak{d}_{n-1}(M,\rho_c)\sqcup  \mathfrak{d}_{n-1}(L,\rho_c)= \big(\mathfrak{d}_{n-1}(\partial M,\rho_c)\sqcup \{(R^-,\text{Rel})\} \big)\backslash \{(r_{\min},\text{Rel})\},$$ as desired.
\end{proof}

We now move on to consider the cases of $k=0$ and $k=n$. Here, as mentioned before, it matters if $c\in \mathrm{int} M$ or $c\not\in M$. So we consider them separately. 

Before giving the matching, we note first that the location of $c$ in relation to $M$ can be read off directly from the sign of the critical point with the earliest birth time in $\mathrm{XRPH}_0(\partial M,c).$

\begin{lemma}
Let $p$ be the critical point of $\rho_c^{\partial M}$ with the earliest birth time in $\mathrm{XRPH}_0(\partial M,c)$. Then $\mathrm{sgn}(\rho_c,p)=+1$ implies $c\notin M$, and $\mathrm{sgn}(\rho_c,p)=-1$ implies $c\in M$.
\end{lemma}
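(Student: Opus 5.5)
The earliest birth in $\mathrm{XRPH}_0(\partial M,c)$ is the ordinary birth $(r_{\min},\mathrm{Ord})$. It occurs at the global minimum $p$ of $\rho_c^{\partial M}$, where the first component of a sublevel set of $\partial M$ appears. So the plan is to identify this minimum as a boundary critical point and read off its sign from the local geometry of $M$ near $p$. For a sublevel set, $p$ must be a local minimum of $\rho_c|_{\partial M}$, so it has index $(0,\eta)$. The two signs then encode what happens in $M$ just below $r_{\min}$.

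Suppose $\eta=+1$. In the chart of the boundary-critical-point definition, $\rho_c\circ\phi^{-1}(x)=r_{\min}+x_1+x_2^2+\cdots+x_n^2$ on $x_1\ge 0$. So $p$ is a local minimum of $\rho_c$ on all of $M$, and a component of $M_s$ is born at $(r_{\min},\mathrm{Ord})$. By Theorem \ref{breakpoint} this is consistent: $\mathrm{Crit}(\rho_c,(0,+1))$ contributes to $\mathfrak{b}_0^{\mathrm{ord}}(M,\rho_c)$. I would now argue by contradiction that $c\notin M$. If $c\in M$, then $c\in\mathrm{int}\,M$, because $c\in\partial M$ would give $r_{\min}=0$, which a generic $c$ excludes. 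Then $M_s$ is nonempty for every $s>0$; indeed the component containing $c$ is born at $(0,\mathrm{Ord})$. Take the path component $C$ of $M_{r_{\min}}$ containing $c$. It is the connected set $\overline{B(c,r_{\min})}\cap M$. Because no boundary point has $\rho_c<r_{\min}$, the ball $B(c,r_{\min})$ meets $\partial M$ nowhere, so $B(c,r_{\min})\subseteq\mathrm{int}\,M$ and its closure lies in $M$. Hence $M_{r_{\min}}\supseteq\overline{B(c,r_{\min})}\ni p$, and near $p$ the ball lies inside $M$. The direction into $M$ at $p$ is therefore the direction toward $c$, which decreases $\rho_c$; that is, $\eta=-1$. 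This contradicts $\eta=+1$.

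Conversely, suppose $\eta=-1$, so $\rho_c$ decreases as one moves from $p$ into $M$. Then points of $M$ exist with $\rho_c<r_{\min}$, so $\min_M\rho_c$ is attained in $\mathrm{int}\,M$. An interior minimum $q$ of the radial function satisfies $\rho_c(q)=0$ unless it is a critical point of index $0$, and $\rho_c$ has no interior critical points other than $c$ itself. Hence $q=c$ and $c\in M$. Equivalently, $\mathrm{Crit}(\rho_c^M,0)\subseteq\{c\}$, and nonemptiness of the interior minimum forces $c\in M$.

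The main subtlety is the step justifying that, at $p$, the inward direction of $M$ matches the ball $B(c,r_{\min})$. In the first case this uses the fact that the open ball misses $\partial M$ and that $M$ is connected with $c$ inside it. I would argue this carefully via connectedness of $B(c,r_{\min})$ and the fact that it contains the interior point $c$ while avoiding $\partial M$. The PL version runs identically, using the PL chart.
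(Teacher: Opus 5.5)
Your proof is correct and follows essentially the same route as the paper's, which argues the contrapositives of your two cases: $c\in M$ forces the inward direction at $p$ to decrease $\rho_c$ (sign $-1$), and $c\notin M$ forces it to increase $\rho_c$ (sign $+1$). You fill in what the paper leaves implicit: $B(c,r_{\min})\subseteq \mathrm{int}\,M$ by connectedness, the only interior local minimum of $\rho_c$ is $c$ itself, and genericity excludes the degenerate case $c\in\partial M$, which the paper passes over silently.
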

\begin{proof}
If $c\in M$, then the gradient must point out of $M$ at the earliest birth radius. This implies a negative sign. 
If $c\notin M$, then the gradient must point into $M$ at the earliest birth radius. This implies a positive sign.   
\end{proof}

As there is at most one birth in degree $n$ we can quickly read off $\mathrm{XRPH}_n(M,c)$.

\begin{lemma}
    If $c\in \text{int}\,M$, then $$\mathrm{XRPH}_n(M,c)=\mathcal{I}_{[(r_{\min},\text{Rel}),(0,\text{Rel}))}.$$ 
    If $c\notin  M$ then $$\mathrm{XRPH}_n(M,c)=0.$$
\end{lemma}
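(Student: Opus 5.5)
The plan is to use the computation of $H_n(X,X^s)$ from the proof of Lemma~\ref{lem:births0}, namely equations (\ref{step1}) and (\ref{step2}), together with the last two identities of that lemma. Those identities show that $\mathfrak{b}_n(M,\rho_c)\sqcup\mathfrak{b}_n(L,\rho_c)$ consists of a single parameter $(r_{\min},\mathrm{Rel})$. Likewise $\mathfrak{d}_n(M,\rho_c)\sqcup\mathfrak{d}_n(L,\rho_c)$ consists of the single parameter $(0,\mathrm{Rel})$. Hence at most one interval occurs in degree $n$ across $M$ and $L$ combined. It therefore suffices to decide which of $M$, $L$ carries it.

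Sublevel sets of $M$ are compact subsets of $\R^n$ with nonempty boundary, so $H_n(M_s)=0$ for all $s$. Thus no ordinary or essential class is born in degree $n$, and the only possible interval is relative, of the form $[(r_{\min},\mathrm{Rel}),(0,\mathrm{Rel}))$.

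Suppose first that $c\in\mathrm{int}\,M$, so $c\notin L$. We take $X=M$ in (\ref{step1}). To justify that equation, note that for $0<s\le r_{\min}$ the set $M_s$ is the closed ball $\overline{B(c,s)}$, which lies in $\mathrm{int}\,M$ because $\partial M\cap B(c,r_{\min})=\emptyset$ and $M$ is connected. Then excision gives
\[
H_n(M,M^s)\cong H_n\bigl(\overline{B(c,s)},\partial B(c,s)\bigr)\cong\Z_2 .
\]
For $s>r_{\min}$ the region $M\setminus M^s$ meets $\partial M$. By Lefschetz duality, $H_n(M,M^s)$ is dual to $H^0$ of the closure of $M\setminus M^s$ relative to its intersection with $\partial M$, and this vanishes because every component of $\{\rho_c<s\}\cap M$ reaches $\partial M$. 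Alternatively, one can argue directly that no fundamental class survives, since $M_s$ is a connected manifold with free boundary portion on $\partial M$. At $s=0$ the pair is $(M,M)$, and the relative group vanishes. The structure maps $H_n(M,M^s)\to H_n(M,M^{s'})$ for $r_{\min}\ge s\ge s'>0$ are isomorphisms, since both groups are generated by the fundamental class of the ball. The module is therefore exactly $\mathcal{I}_{[(r_{\min},\mathrm{Rel}),(0,\mathrm{Rel}))}$, consistent with Lemma~\ref{lem:births0}, which forces $L$ to have no degree-$n$ interval.

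Now suppose $c\notin M$. Then $c\in L$ (more precisely $c$ lies in $L$ away from $\partial M$, by genericity), so $M$ plays the role of $Y$ in (\ref{step2}). This gives $H_n(M,M^s)=0$ for all $s$, and hence $\mathrm{XRPH}_n(M,c)=0$. Equivalently, the unique degree-$n$ parameters in Lemma~\ref{lem:births0} belong to $L$.

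The main obstacle is rigorously justifying (\ref{step1}) and (\ref{step2}) in the range $s>r_{\min}$, that is, showing $H_n(M,M^s)=0$ once the sublevel set touches the boundary. My first attempt would be Lefschetz duality, $H_n(M,M^s)\cong H^0(M\setminus\mathrm{int}\,M^s,\ \partial M\cap M_s)$ up to collaring at regular values. This vanishes because each component of $M_s$ meets $\partial M$. That in turn holds since $\rho_c$ has no interior local maxima other than possibly $c$ itself: $\rho_c$ is convex, so its only interior critical point is the minimum at $c$.
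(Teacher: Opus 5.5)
Your argument is correct and rests on the same idea as the paper's proof. The only possible degree-$n$ class is the relative fundamental class of $(\overline{B(c,s)},\partial B(c,s))$, which exists exactly when $c\in\mathrm{int}\,M$ and $s\le r_{\min}$; your excision and Lefschetz-duality steps make rigorous what the paper asserts in one line.

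Two small remarks. First, the appeal to Lemma~\ref{lem:births0} and (\ref{step2}) is redundant and mildly circular, since the paper states those equations without proof and they are essentially this lemma. Second, in your last paragraph the relevant fact is that $\rho_c$ has no interior local \emph{minima} other than $c$, not maxima: new components of $M_s$ appear only at index-$0$ critical points, and this is what guarantees that every component of $M_s$ meets $\partial M$ once $s>r_{\min}$.
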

\begin{proof}

The only way to have a degree $n$-homology class for subsets of $\R^n$ under a radial function is with relative homology in the form $(B(c,r), \partial B(c,r))$. This means we only need to keep track of $r$ such that $B(c,r)\subset M$.  

For $c\in M$ we have $B(c,r)\subset M$ for all $r\leq r_{min}$ and hence the extended persistent homology is the interval module $\mathcal{I}_{[(r_{\min},\text{Rel}),(0,\text{Rel}))}$.

For $c\notin M$ we have $B(c,r)$ is not a subset of $M$ for all $r$. This implies that the extended persistent homology module is zero.

\end{proof}

At this point it will be useful to have a couple of lemmas for proof techniques that appear multiple times. A common method is to use the existence of an injective (or surjective) morphism $\phi$ between extended persistence modules to construct an induced matching $\hat{\phi}$ between the sets of births (or deaths). If we can find a subset of births (or deaths) $X$ such that $\hat{\phi}(X)=X$ then it is useful to know $\hat{\phi}|_X$ must be the identity.

\begin{lemma}\label{lem:identity}
Let $Y$ be a totally ordered finite set and $\hat{\phi}:Y\to Y$ a bijection such that $\hat{\phi}(y)\leq y$ for all $y\in Y$. Then $\hat{\phi}$ must be the identity. 
Symmetrically, if $\hat{\phi}(y)\geq y$ for all $y\in Y$. Then $\hat{\phi}$ must be the identity. 
\end{lemma}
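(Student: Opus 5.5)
We prove the first statement by strong induction on the elements of $Y$ in increasing order; the second statement then follows by symmetry. Since $Y$ is finite and totally ordered, write its elements as $y_1<y_2<\cdots<y_m$. The claim to establish is that $\hat{\phi}(y_j)=y_j$ for every $j$, and we induct on $j$.

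For the base case, note that $\hat{\phi}(y_1)\leq y_1$ and $\hat{\phi}(y_1)\in Y$. Since $y_1$ is the minimum of $Y$, this forces $\hat{\phi}(y_1)=y_1$. For the inductive step, suppose $\hat{\phi}(y_i)=y_i$ for all $i<j$. The hypothesis gives $\hat{\phi}(y_j)\leq y_j$, so $\hat{\phi}(y_j)\in\{y_1,\dots,y_j\}$. By injectivity, $\hat{\phi}(y_j)$ cannot equal any $y_i$ with $i<j$, because those values are already attained as $\hat{\phi}(y_i)$. Hence $\hat{\phi}(y_j)=y_j$, which completes the induction. Notice that only injectivity and finiteness are used; surjectivity is automatic for an injective self-map of a finite set.

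An alternative, non-inductive argument also works. The map $y\mapsto y$ and $\hat{\phi}$ are both bijections of $Y$, so $\sum_{y}\#\{z\in Y: z<\hat\phi(y)\}=\sum_y \#\{z\in Y:z<y\}$. Each term on the left is at most the corresponding term on the right, so every inequality must be an equality. The induction is cleaner, though, so we present that.

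For the symmetric statement, where $\hat{\phi}(y)\geq y$ for all $y$, we either reverse the total order on $Y$ and apply the first statement, or run the same induction downward starting from the maximum $y_m$. There is no real obstacle here. The only point requiring care is that finiteness (or at least well-foundedness) is essential: on $\mathbb{Z}$, the shift $y\mapsto y-1$ is a bijection with $\hat{\phi}(y)\le y$ that is not the identity. The proof should therefore make explicit where the existence of a minimum element is used.
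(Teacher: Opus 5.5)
Your proof is correct and follows essentially the paper's argument: the paper also notes that the minimum element $y_0$ must be fixed, then restricts $\hat{\phi}$ to $Y\setminus\{y_0\}$ and inducts on $|Y|$, which amounts to your upward induction in which injectivity excludes values already attained. Your rank-counting alternative and your remark that finiteness (well-foundedness) is necessary are valid additions not found in the paper.
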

\begin{proof}
This can be proved inductively from size of the set. Let $y_0$ be the smallest element. As $\hat{\phi}(y_0)\leq y_0$ and the only $y\in Y$ with $y\leq y_0$ is $y_0$ itself we conclude $\hat{\phi}(y_0)=y_0$. We then can consider $\hat{\phi}|_{Y\backslash \{y_0\}}:Y\backslash \{y_0\} \to Y\backslash \{y_0\}$ which satisfies the considition of the lemma and is a strictly smaller set. The inductive hypothesis thus says   $\hat{\phi}|_{Y\backslash \{y_0\}}$ is the identity map. This combined with $\hat{\phi}(y_0)=y_0$ implies $\hat{\phi}$ is the identity map.
\end{proof}

\begin{theorem}\label{thm:0cinM}
Assume that $c\in \text{int}\,M$. If $$\text{XRPH}_0(\partial M, c) = \bigoplus_{[b_i,d_i)\in S_0^{\partial M}} \mathcal{I}_{[b_i,d_i)}$$ and $J_0^M\subseteq S_0^{\partial M}$ be the subset of intervals $[b_i,d_i)$ such that $b_i = (\rho_c(p),\text{Ord})$ with $p\in\text{Crit}(\rho_c^M,(0,+1))$,
then $$\text{XRPH}_0(M,c) = \mathcal{I}_{[(0,\text{Ord}),(r_{\max},\text{Rel}))}\oplus \bigoplus_{[b_i,d_i)\in J^M_0} \mathcal{I}_{[b_i,d_i)}.$$
\end{theorem}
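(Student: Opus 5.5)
The plan is to follow the strategy of Theorem~\ref{thm:k}, now in degree $0$ where the Mayer--Vietoris maps are only injective rather than isomorphisms. First, the essential class is easy to isolate. $M$ is connected and $c\in\mathrm{int}\,M$, so $\rho_c^M$ has a unique global minimum at $c$ itself, an interior critical point of index $0$ with value $0$. The unique essential class of $\mathrm{XRPH}_0(M,c)$ is therefore born at $(0,\mathrm{Ord})$. Dually, $H_0(M,M^s)$ vanishes as soon as $M^s\neq\emptyset$ and equals $\Z_2$ once $M^s=\emptyset$. This happens exactly for $s>\max\rho_c(M)=r_{\max}$; the maximum lies on $\partial M$, in fact on $A^{\mathrm{ext}}$, since $\rho_c$ has no interior maxima. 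So the essential bar dies at $(r_{\max},\mathrm{Rel})$, giving the summand $\mathcal{I}_{[(0,\mathrm{Ord}),(r_{\max},\mathrm{Rel}))}$. Relative degree-$0$ classes of a connected space are trivial, and so are relative classes of $(M,M^s)$ when $M^s\ne\emptyset$. Hence $\mathrm{Rel}_0(M,\rho_c)=0$, and it remains to identify $\mathrm{Ord}_0(M,\rho_c)$.

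For the ordinary part I would use the injective morphism
\[
\phi_0:\mathrm{XRPH}_0(\partial M,c)\to \mathrm{XRPH}_0(M,c)\oplus\mathrm{XRPH}_0(L,c)
\]
from the short exact sequences in the proof of Lemma~\ref{lem:births0}. Its cokernel at $(s,\mathrm{Ord})$ is $H_0(\overline{B(c,s)})\cong\Z_2$ for all $s\ge 0$. At $(s,\mathrm{Rel})$ it is $H_0(M\cup L,M^s\cup L^s)$, which is $\Z_2$ only for $s>R^+$. So the cokernel is the single interval module $\mathcal{I}_{[(0,\mathrm{Ord}),(R^+,\mathrm{Rel}))}$.

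The induced matching of an injection sends each bar of $\partial M$ to a bar of the target with the same death and an earlier or equal birth. Theorem~\ref{breakpoint}, applied to $M$ and to $L$ together with Lemma~\ref{lem:swaps}, then sorts the ordinary birth values by sign:
\begin{itemize}
\item births of $\mathrm{Ord}_0(M)$ other than $(0,\mathrm{Ord})$ are exactly the values of points in $\mathrm{Crit}(\rho_c^M,(0,+1))$, since $c$ is the only interior index-$0$ point;
\item births in $L$ come from $(0,+1)$-points of $\rho_c^L$, that is, from $(0,-1)$-points of $\rho_c^M$.
\end{itemize}
This matches the birth bookkeeping $\mathfrak{b}_0(M)\sqcup\mathfrak{b}_0(L)=\mathfrak{b}_0(\partial M)\sqcup\{(0,\mathrm{Ord})\}$ of Lemma~\ref{lem:births0}. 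The extra birth $(0,\mathrm{Ord})$ is the essential bar of $M$, and the extra death $(R^+,\mathrm{Rel})$ is the essential bar of $L$; both account for the cokernel.

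The key step, and the one I expect to be the main obstacle, is showing that each bar of $\partial M$ whose birth has positive sign keeps its exact death in $M$. That is, the matching pairs it with the bar of $M$ with the same birth, rather than a cascade as in degree $n-1$. My plan is to restrict the induced matching to $J_0^M$ and its image. The births on both sides are the same set $X=\{(\rho_c(p),\mathrm{Ord}) : p\in\mathrm{Crit}(\rho_c^M,(0,+1))\}$. An injective morphism moves births weakly earlier, so applying Lemma~\ref{lem:identity} to the induced bijection on $X$ forces the birth-matching to be the identity. Deaths are preserved by the matching of an injection, so each bar in $J_0^M$ appears verbatim in $\mathrm{XRPH}_0(M,c)$. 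Here I also need that bars of $\partial M$ with $(+)$-births cannot be matched to bars of $L$. I would justify this by comparing the birth sets as above, and for the deaths by the equality $\mathfrak{d}_0(M)\sqcup\mathfrak{d}_0(L)=\mathfrak{d}_0(\partial M)\sqcup\{(R^+,\mathrm{Rel})\}$. If the matching argument gets delicate, the fallback is a direct geometric check. For a $(+)$-birth at $p$, the new component of $(\partial M)_s$ near $p$ is also a new component of $M_s$. The two merge at the same parameter, because the sublevel set $M_s$ deformation retracts onto a neighbourhood of $(\partial M)_s$ together with the ball around $c$ away from interior critical points, of which there are none in degree~$0$ apart from $c$. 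Counting bars then shows there are no further ordinary bars in $M$.
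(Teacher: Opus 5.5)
\textbf{Same route as the paper, but the key step is not closed.} Your strategy is the paper's: the injective Mayer--Vietoris morphism $\phi_0$, its induced matching (same death, weakly earlier birth), Lemma~\ref{lem:identity}, and then sorting by sign. The gap is in the step you yourself flag as the main obstacle, and the justification you offer for it does not work.

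To apply Lemma~\ref{lem:identity} to the set $X$ of $(+)$-births, you must already know that the matching sends the bars of $J_0^M$ onto bars of $M$ whose births lie in $X$. The multiset identities of Lemma~\ref{lem:births0} do not give this. Since critical values are distinct, the matching sends each bar $[b,d)$ of $\partial M$ to the unique bar of $\mathrm{XRPH}_0(M,c)\oplus\mathrm{XRPH}_0(L,c)$ dying at $d$. The induced map on births is therefore an injection $\hat\phi_0:\mathfrak{b}_0(\partial M,\rho_c)\to\mathfrak{b}_0(\partial M,\rho_c)\sqcup\{(0,\mathrm{Ord})\}$ with $\hat\phi_0(b)\le b$. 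Any such injection is the identity except along a chain $s_1<\dots<s_m$, where
\begin{itemize}
\item $\hat\phi_0(s_1)=(0,\mathrm{Ord})$,
\item $\hat\phi_0(s_{i+1})=s_i$ for $i<m$,
\item $s_m$ is the unique unmatched target birth.
\end{itemize}
If $m>1$, a $(+)$-birth can be sent to an earlier $(-)$-birth, so its bar lands in $L$ with a different birth. Counting births and deaths cannot exclude this. Hence $X$ is not known to be invariant, and Lemma~\ref{lem:identity} cannot yet be applied to it.

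\textbf{The missing idea: pin down the two exceptional bars.} This is the heart of the paper's proof.
\begin{itemize}
\item The bar of $L$ dying at $(R^+,\mathrm{Rel})$ comes from the exterior component of $L$ and is born at $(r^{\mathrm{ext}}_{\min},\mathrm{Ord})$. No bar of $\partial M$ dies at $R^+$, so this target birth is the unmatched one: $s_m=(r^{\mathrm{ext}}_{\min},\mathrm{Ord})$.
\item The essential bar $[(r^{\mathrm{ext}}_{\min},\mathrm{Ord}),(r_{\max},\mathrm{Rel}))$ of $A^{\mathrm{ext}}$ must be matched to the only target bar dying at $(r_{\max},\mathrm{Rel})$, namely the essential bar of $M$. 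So $s_1=(r^{\mathrm{ext}}_{\min},\mathrm{Ord})$ as well.
\end{itemize}
Hence $m=1$, and $\hat\phi_0$ restricts to a self-bijection of $\mathfrak{b}_0(\partial M,\rho_c)\setminus\{(r^{\mathrm{ext}}_{\min},\mathrm{Ord})\}$, covering both signs at once. Lemma~\ref{lem:identity} forces this to be the identity. Only after that does sign sorting place the $(+)$-bars in $M$ unchanged.

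\textbf{The geometric fallback does not substitute for this.} First, $M_s$ does not deformation retract onto a neighbourhood of $(\partial M)_s$ together with a small ball at $c$. For a disc with $s$ just above $r_{\min}$, $M_s$ is connected but that union is disconnected. Second, components of $(\partial M)_s$ can join through the interior of $M$ before they join along $\partial M$, since merges at $(1,-1)$-points are invisible in $M_s$. So ``they merge at the same parameter'' is exactly the statement that needs the Mayer--Vietoris injectivity argument.
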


\begin{proof}
From Lemma \ref{lem:births0}, we have 
$\mathfrak{b}_0(\partial M,\rho_c)\sqcup \{(0,\text{Ord})\} = \mathfrak{b}_0(M,\rho_c)\sqcup \mathfrak{b}_0(L,\rho_c),$ which we will denote by $\mathfrak{b}$. Note that all degree-$0$ births appear in the ordinary parameter range.
By checking the ordinary Mayer Vietoris sequences (\ref{LESord}) and (\ref{LESrel}), we know that both maps
$$H_0((\partial M)_s)\to H_0(M_s)\oplus H_0(L_s),$$ and 
$$H_0(\partial M,(\partial M)^s)\to H_0(M,M^s)\oplus H_0(L,L^s)$$ are induced by inclusions and are injective for all $s\ge 0$. Hence, the morphism 
$$\phi_0:  \text{XRPH}_0(\partial M,c)\to\text{XRPH}_0(M,c)\oplus\text{XRPH}_0(L,\rho_c)$$ is injective. The map $\phi_0$ induces an injective map $\hat{\phi}_0: \mathfrak{b}_0(\partial M,\rho_c)\to\birth_0(M, \rho_c)\cup \birth_0(L, \rho_c)$ that matches the interval $[b,d)$ in the interval decomposition of $\text{XRPH}_0(\partial M, c)$ to the interval $[b',d)$ in the interval decomposition of $\text{XRPH}_0(M,c)\oplus \text{XRPH}_0(L,c)$ such that $b'\leq b$.

As $c\notin L$, the connected component of $L$ in the infinite exterior component of $\R^n\backslash \partial M$ contributes an interval $[
(r_{min}^{ext}, \text{Ord}), (R^+, \text{Rel}))$ to $S^L_0$. However there is no death at $(R^+, \text{Rel})$ in $S^{\partial{M}}_0$. This implies that $r_{min}^{ext}\notin \im(\hat{\phi}_0)$. This implies that $\hat{\phi}_0$ is a bijection from $\mathfrak{b}_0(\partial M,\rho_c)$ to $\mathfrak{b}\backslash \{(r_{min}^{ext}, \text{Ord})\}$.

As $c\in M$, $[(0,\text{Ord}),(r_{\max}, \text{Rel}))$ is an interval in the interval decomposition of $\text{XRPH}_0(M,c)$. Furthermore, observe that $[(r_{\min}^\mathrm{ext},\text{Ord}),(r_{\max}, \text{Rel}))$ is an interval in the interval decomposition of $\text{XRPH}_0(\partial M,c)$. 
Hence, we have 
$$\hat{\phi}_0(r_{\min}^\mathrm{ext},\text{Ord})=(0,\text{Ord}),$$ and 
$$\hat{\phi}_0\big(\mathfrak{b}_0(\partial M,\rho_c)\backslash \{(r_{\min}^\mathrm{ext},\text{Ord})\}\big) = \mathfrak{b}\backslash \{(0,\text{Ord}), (r_{\min}^\mathrm{ext},\text{Ord})\} = \mathfrak{b}_0(\partial M,\rho_c)\backslash \{(r_{\min}^\mathrm{ext},\text{Ord})\}.$$

It then follows $\hat{\phi}_0\vert_{\mathfrak{b}_0(\partial M,\rho_c)\backslash\{(r_{\min}^\mathrm{ext},\text{Ord})\}}$ is the identity by Lemma \ref{lem:identity}.

As we already know the matching of the remaining birth and death elements in $\mathfrak{b}$ and $\mathfrak{d}$, we have shown that \begin{align}\label{eq:k=0cimM}\text{XRPH}_0(\partial M,c)\oplus \mathcal{I}_{[(0,\text{Ord}),(r_{\max},\text{Rel}))}\oplus \mathcal{I}_{[(r_{\min}^\mathrm{ext},\text{Ord}),(R^+,\text{Rel}))}&= \\ \text{XRPH}_0(M,c)\oplus \text{XRPH}_0(L,c)&\oplus \mathcal{I}_{[(r_{\min}^\mathrm{ext},\text{Ord}),(r_{\max},\text{Rel}))}. \end{align}

We can then identify which of the intervals in $S_{0}^{\partial M}\backslash \{[(r_{\min}^\mathrm{ext},\text{Ord}),(r_{\max},\text{Rel}))\}$ are in $S_{0}^M$ (as opposed to $S_{0}^L$) by checking the sign of the birth values, using the same arguments as in the proof of Theorem \ref{thm:k}.
\end{proof}

Let $\mathfrak{b}_0^{\Ord}(X,f)$, and $\mathfrak{b}_0^{\Ess}(X,f)$ be the restriction of birth times to those intervals in the ordinary part $\Ord_0(X,\rho_c)$, and essential part $\Ess_0(X,\rho_c)$, of the extended persistence module $XRPH(X, \rho_c)$.

\begin{lemma}\label{lem:Ord0births}
$$\mathfrak{b}_0^{\Ess}(\partial M,\rho_c) \sqcup \{(0, \Ord)\}=\mathfrak{b}_0^{\Ess}(M,\rho_c)\sqcup \mathfrak{b}_0^{\Ess}(L,\rho_c)$$ and 
$$\mathfrak{b}_0^{\Ord}(\partial M,\rho_c)=\mathfrak{b}_0^{\Ord}(M,\rho_c)\sqcup \mathfrak{b}_0^{\Ord}(L,\rho_c).$$
\end{lemma}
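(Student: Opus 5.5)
We will refine the first identity of Lemma~\ref{lem:births0}, which already gives $\mathfrak{b}_0(\partial M,\rho_c)\sqcup\{(0,\Ord)\}=\mathfrak{b}_0(M,\rho_c)\sqcup\mathfrak{b}_0(L,\rho_c)$. Every degree-$0$ interval is born in the ordinary range and is either ordinary or essential. So it suffices to prove one of the two stated equalities; the other then follows by subtracting from this disjoint union. We aim to prove the essential one, because essential births in degree $0$ are counted by connected components.

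\textbf{Counting essential births.} For a space $X$, the essential classes in $\mathrm{XRPH}_0(X,\rho_c)$ are exactly the intervals surviving to $H_0(X)=H_0(X,X^{t})$ for $t$ beyond the maximum. By the elder rule, each connected component $C$ of $X$ contributes exactly one essential interval, born at $(\min_{C}\rho_c,\Ord)$. Hence
\[
\mathfrak{b}_0^{\Ess}(X,\rho_c)=\{(\min_C\rho_c,\Ord)\;:\;C\in\pi_0(X)\}.
\]
The whole statement therefore reduces to a claim about component minima of $\partial M$, $M$ and $L$.

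\textbf{Matching components.} Since $M$ is connected, $\mathfrak{b}_0^{\Ess}(M)=\{(\min_M\rho_c,\Ord)\}$. This minimum is $0$ if $c\in\mathrm{int}\,M$. If $c\notin M$, it equals $r_{\min}$, because the minimum of a distance function over $M$ is attained on $\partial M$.

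By Alexander duality, the components of $L$ are in bijection with the components of $\mathbb R^n\setminus M$:
\begin{itemize}
\item the unbounded one, which contains $\partial B$;
\item one for each bounded complementary region, i.e.\ for each interior boundary component $A$, bounded by $A$.
\end{itemize}
Correspondingly, $\partial M$ has components $A^{\ext}$ together with the interior components $A$.

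For an interior component $A$, let $L_A$ be the matching component of $L$, with $\partial L_A=A$. If $c\notin L_A$, then $\min_{L_A}\rho_c=\min_A\rho_c$. For the unbounded component $L_\infty$, its boundary is $A^{\ext}\cup\partial B$, and $\partial B$ lies far away, so $\min_{L_\infty}\rho_c=r^{\ext}_{\min}$ provided $c\notin L_\infty$.

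\textbf{Case analysis on the location of $c$.}
\begin{itemize}
\item If $c\in\mathrm{int}\,M$, then $c$ lies in no component of $L$. The births of $L$ are exactly the minima of all components of $\partial M$, and $M$ contributes $(0,\Ord)$. This gives the identity directly.
\item If $c$ lies in some component $L_0$ of $L$, then $\min_{L_0}\rho_c=0$ contributes $(0,\Ord)$. Meanwhile $M$ contributes $r_{\min}$, which is the minimum over the boundary component of $\partial M$ closest to $c$. We must check that this component is precisely the one corresponding to $L_0$ in the pairing above. Then the multiset of births matches after swapping which side accounts for that component.
\end{itemize}

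The main obstacle is this last step: showing the closest point of $\partial M$ to $c$ lies on the boundary of the complementary region containing $c$. We argue via the segment from $c$ to the nearest point $p\in M$: it avoids $M$ except at $p$, so it stays in $L_0$. Hence $p\in\partial L_0\cap\partial M$. It remains to exclude equality collisions among minima, which genericity (distinct critical values) provides.

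The ordinary equality then follows by removing the essential births from both sides of Lemma~\ref{lem:births0}.
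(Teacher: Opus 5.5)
Your proof is correct, but it is organised differently from the paper's.

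\textbf{The paper's route.} It proves the essential identity by induction on the number of components of $\partial M$. It picks a bounded component $L'$ of $L$ with connected boundary and fills it in. It then applies the inductive hypothesis to $\tilde M=M\cup L'$ and $L\setminus L'$, splitting into the cases $c\notin L'$ and $c\in L'$. The ordinary identity is obtained exactly as you do, by subtracting from Lemma~\ref{lem:births0} using $\Rel_0=0$.

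\textbf{Your route.} You compute all essential births at once as component minima. You then use the global pairing of components of $L$ with components of $\partial M$. The whole argument thereby reduces to a single geometric fact: the point of $M$ nearest to $c$ lies on the boundary of the component of $L$ containing $c$.

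\textbf{What each buys.} Your segment argument makes explicit something the paper leaves unproved. In its case $c\in L'$ the paper asserts $\mathfrak{b}_0^{\Ess}(M,\rho_c)=\mathfrak{b}_0^{\Ess}(\partial L',\rho_c)$, i.e.\ $\min_M\rho_c=\min_{\partial L'}\rho_c$, without justification, and that is precisely your nearest-point fact. The induction, in turn, localises the topological input: each step needs only one component of $L$ with connected boundary, and the base case with a single boundary component is nearly trivial. Your approach instead needs the full component correspondence up front. The precise statement you use is that each bounded complementary region is bounded by exactly one component of $\partial M$. That comes from Jordan--Brouwer separation applied to each component $A$, together with connectedness of $M\setminus A$; Alexander duality alone only gives a count. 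The paper relies on the same facts in its base case and in the existence of $L'$, so your argument is no less rigorous than the original.
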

\begin{proof}
We first prove $\mathfrak{b}_0^{\Ess}(\partial M,\rho_c) \sqcup \{(0, \Ord)\}=\mathfrak{b}_0^{\Ess}(M,\rho_c)\sqcup \mathfrak{b}_0^{\Ess}(L,\rho_c)$ by induction on the number of connected components of $\partial M$.

Suppose that $\partial M$ contains a single connected component. Then both $M$ and $L$ each consist of a single connected component. One of these contains $c$, and hence has its corresponding essential class with birth $(0, \Ord)$. The other, which doesn't contain $c$, with have minimum $\rho_c$ value on $\min\{\rho_c(\partial M)\}$ and hence its corresponding essential class with birth $(\min\{\rho_c(\partial M)\}, \Ord)$. Together these imply that $\mathfrak{b}_0^{\Ess}(\partial M,\rho_c) \sqcup \{(0, \Ord)\}=\mathfrak{b}_0^{\Ess}(M,\rho_c)\sqcup \mathfrak{b}_0^{\Ess}(L,\rho_c)$. 

For the inductive step, suppose that $\partial M$ has $m>1$ connected components. As $M$ is connected this implies that there is a connected component $L'$ of $L$ whose boundary $\partial L'$ is connected. If we set $\tilde{M}:=M\cup L'$ then $\partial \tilde{M}=\partial M \backslash \partial L'$, with $m-1$ connected components, and $\overline(B(\hat{c}, R)\backslash M)=L\backslash L'$. The inductive hypothesis tells us  $$\mathfrak{b}_0^{\Ess}(\partial \tilde{M},\rho_c) \sqcup \{(0, \Ord)\}=\mathfrak{b}_0^{\Ess}(\tilde{M},\rho_c)\sqcup \mathfrak{b}_0^{\Ess}(L\backslash L',\rho_c).$$ 
If $c\notin L'$ then $\mathfrak{b}_0^{\Ess}(L',\rho_c)=\mathfrak{b}_0^{\Ess}(\partial L',\rho_c)$ and $\mathfrak{b}_0^{\Ess}(\tilde{M},\rho_c)=\mathfrak{b}_0^{\Ess}(M,\rho_c)$. Together, with the inductive hypothesis, they tell us
\begin{align*}
    \mathfrak{b}_0^{\Ess}(\partial M,\rho_c) \sqcup \{(0, \Ord)\}&=\mathfrak{b}_0^{\Ess}(\partial \tilde{M},\rho_c) \sqcup \{(0, \Ord)\}\sqcup \mathfrak{b}_0^{\Ess}(\partial L',\rho_c)\\
    &=\mathfrak{b}_0^{\Ess}(\tilde{M},\rho_c)\sqcup \mathfrak{b}_0^{\Ess}(L\backslash L',\rho_c)\sqcup \mathfrak{b}_0^{\Ess}(L',\rho_c)\\
    &= \mathfrak{b}_0^{\Ess}(M,\rho_c)\sqcup \mathfrak{b}_0^{\Ess}(L,\rho_c).
\end{align*}
If $c\in L'$ then  $\mathfrak{b}_0^{\Ess}(L',\rho_c)=\{(O, \Ord)\}=\mathfrak{b}_0^{\Ess}(\tilde{M}, \rho_c)$ and $\mathfrak{b}_0^{\Ess}(M, \rho_c)=\mathfrak{b}_0^{\Ess}(\partial L',\rho_c)$. Together, with the inductive hypothesis, they tell us
\begin{align*}
    \mathfrak{b}_0^{\Ess}(\partial M,\rho_c) \sqcup \{(0, \Ord)\}&=\mathfrak{b}_0^{\Ess}(\partial \tilde{M},\rho_c) \sqcup \{(0, \Ord)\}\sqcup \mathfrak{b}_0^{\Ess}(\partial L',\rho_c)\\
    &=\mathfrak{b}_0^{\Ess}(\tilde{M},\rho_c)\sqcup \mathfrak{b}_0^{\Ess}(L\backslash L',\rho_c)\sqcup \mathfrak{b}_0^{\Ess}(\partial L',\rho_c)\\
    &= \mathfrak{b}_0^{\Ess}(L',\rho_c)\sqcup \mathfrak{b}_0^{\Ess}(L\backslash L',\rho_c)\sqcup \mathfrak{b}_0^{\Ess}(M, \rho_c)\\
    &=\mathfrak{b}_0^{\Ess}(L, \rho_c) \sqcup \mathfrak{b}_0^{\Ess}(M, \rho_c).
\end{align*}
As $\partial M$ always has finitely many connected components we have shown by induction that $\mathfrak{b}_0^{\Ess}(\partial M,\rho_c) \sqcup \{(0, \Ord)\}=\mathfrak{b}_0^{\Ess}(M,\rho_c)\sqcup \mathfrak{b}_0^{\Ess}(L,\rho_c)$ always holds.
 
For all sets $\Rel_0(X, \rho_c)=0$, and thus $\birth_0(X, \rho_c)=\birth_0^{\Ord}(X, \rho_c) \sqcup \birth_0^{\Ess}(X, \rho_c)$ for each of $X\in \{\partial M, M, L\}$.

From Lemma \ref{lem:births0} we have
$\mathfrak{b}_0(M,\rho_c)\sqcup  \mathfrak{b}_0(L,\rho_c)= \mathfrak{b}_0(\partial M,\rho_c)\sqcup \{(0,\text{Ord})\}$. The result then follows because we have already shown that $\mathfrak{b}_0^{\Ess}(\partial M,\rho_c) \sqcup \{(0, \Ord)\}=\mathfrak{b}_0^{\Ess}(M,\rho_c)\sqcup \mathfrak{b}_0^{\Ess}(L,\rho_c).$

\end{proof}

\begin{theorem}
Assume that $c\notin M$. If $$\text{XRPH}_0(\partial M,c) = \bigoplus_{[b_i,d_i)\in {S_0^{\partial M}}} \mathcal{I}_{[b_i,d_i)}$$ and $J_0^M\subseteq S_0^{\partial M}$ be the subset of intervals $[b_i,d_i)$ such that  either $b_i = (\rho_c(p),\text{Ord})$ with $p\in\text{Crit}(\rho_c^M,(0,+1))$,
then $$\text{XRPH}_0(M,c) = \mathcal{I}_{[(r_{min}, \text{Ord}), (r_{max}, \text{Rel}))}\oplus \bigoplus_{[b_i,d_i)\in J^M_0} \mathcal{I}_{[b_i,d_i)}.$$
\end{theorem}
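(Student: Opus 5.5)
The plan is to treat the essential part and the ordinary part of $\mathrm{XRPH}_0(M,c)$ separately. In degree $0$ there are no relative classes, so $\mathrm{XRPH}_0(M,c)=\Ord_0(M,\rho_c)\oplus\Ess_0(M,\rho_c)$.

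\emph{Essential part.} Since $M$ is connected, $\Ess_0(M,\rho_c)$ is a single interval $[(\min\rho_c^M,\Ord),(\max\rho_c^M,\Rel))$. Because $c\notin M$, the function $\rho_c$ has no interior critical point of index $0$ on $M$: the only interior critical point of $\rho_c$ on $\R^n$ is $c$ itself. Hence the minimum over $M$ is attained on $\partial M$, so it equals $r_{\min}$. The maximum over $M$ is attained on the exterior boundary, so it equals $r_{\max}=r_{\max}^{\mathrm{ext}}$. This gives the summand $\mathcal{I}_{[(r_{\min},\Ord),(r_{\max},\Rel))}$.

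A caveat: the point $p$ realising $r_{\min}$ lies in $\mathrm{Crit}(\rho_c^M,(0,+1))$ by the sign lemma above. However, if $p$ lies on an interior boundary component, its essential bar in $\partial M$ dies at the maximum over that component rather than at $r_{\max}$. I therefore read $J_0^M$ as ranging over the ordinary intervals of $S_0^{\partial M}$, i.e.\ those with death in $\mathcal{O}$. The essential bars of $\partial M$ are accounted for by Lemma \ref{lem:Ord0births}: they match $\mathfrak{b}_0^{\Ess}(M)\sqcup\mathfrak{b}_0^{\Ess}(L)$ up to the extra $(0,\Ord)$, which belongs to $L$ since $c\in L$.

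\emph{Ordinary part.} As in the proof of Theorem \ref{thm:0cinM}, I will use the morphism
$$\phi_0:\mathrm{XRPH}_0(\partial M,c)\to\mathrm{XRPH}_0(M,c)\oplus\mathrm{XRPH}_0(L,c),$$
which is injective at every parameter by the Mayer--Vietoris sequences (\ref{LESord}) and (\ref{LESrel}). Injectivity gives an induced matching $\hat\phi_0$ on births, sending $[b,d)$ to some $[b',d)$ with $b'\le b$, and it preserves deaths. Restricting to bars whose deaths lie in $\mathcal{O}$, Lemma \ref{lem:Ord0births} gives $\mathfrak{b}_0^{\Ord}(\partial M)=\mathfrak{b}_0^{\Ord}(M)\sqcup\mathfrak{b}_0^{\Ord}(L)$. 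Thus $\hat\phi_0$ restricts to a bijection of this finite totally ordered set that does not increase elements, and Lemma \ref{lem:identity} shows it is the identity. Consequently every ordinary bar of $\partial M$ appears, with the same endpoints, as an ordinary bar of exactly one of $M$ or $L$, and conversely.

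\emph{Assigning bars to $M$ or $L$.} To decide which ordinary bars belong to $M$, I apply Theorem \ref{breakpoint} in degree $0$. An ordinary birth of $M$ occurring on $\partial M$ must come from $\mathrm{Crit}(\rho_c^M,(0,+1))$. An ordinary birth of $L$ occurring on $\partial M$ comes from $\mathrm{Crit}(\rho_c^L,(0,+1))=\mathrm{Crit}(\rho_c^M,(0,-1))$ by Lemma \ref{lem:swaps}. So the sign of the birth point separates the two, exactly as in Theorem \ref{thm:k}.

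\emph{Main obstacle.} The delicate step is justifying that the matching $\hat\phi_0$ respects the ordinary/essential split, so that Lemma \ref{lem:identity} can be applied to ordinary births alone. An injective morphism could in principle pair an ordinary bar of $\partial M$ with an essential bar of $M\oplus L$ having the same death. Since ordinary deaths lie in $\mathcal{O}$ and essential deaths lie in $\mathcal{R}$, and deaths are preserved, this cannot happen. I expect to state this explicitly, using the distinctness of critical values to rule out coincidences.
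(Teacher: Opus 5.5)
Your proposal is correct and follows the paper's proof essentially step for step. The steps are: the single essential bar from connectedness of $M$; the injective morphism $\phi_0$ and its induced matching on births; Lemma \ref{lem:Ord0births} with Lemma \ref{lem:identity} giving $\Ord_0(\partial M,\rho_c)=\Ord_0(M,\rho_c)\oplus\Ord_0(L,\rho_c)$; and the sign check via Theorem \ref{breakpoint} and Lemma \ref{lem:swaps}. Reading $J_0^M$ as ranging only over the ordinary bars of $\partial M$ is the right choice, and it is exactly what the paper's proof establishes; taken literally, the positive-sign essential bar born at $r_{\min}$ would either duplicate the explicit essential summand or, when $c$ lies in a hole of $M$, contribute a spurious bar.
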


\begin{proof}

We should first recall that we are assuming throughout that $M$ is connected. This implies that $\mathrm{Ess}_0(M,\rho_c)$ contains a single interval module born at $(\min_{p\in M}\rho_c(p), \mathrm{Ord})=(r_{min}, \mathrm{Ord})$ and dying at $(\max_{p\in M}\rho_s(p), \mathrm{Rel})=(r_{max},\mathrm{Rel})$. As always, $\mathrm{Rel}_0(M, \rho_c)$ must be empty. It remains to determine $\mathrm{Ord}_0(M)$.

By the same arguments as the first part of the proof of Theorem \ref{thm:0cinM} there exists an injective map 
$$\hat{\phi}_0: \birth_0(\partial M, \rho_c) \to \birth_0(M, \rho_c)\cup \birth_0(L, \rho_c)$$
that matches the birth of an interval $[b,d)$ in the interval decomposition of $\text{XRPH}_0(\partial M, c)$ to the birth of the interval $[b',d)$ (same death value) in the interval decomposition of $\text{XRPH}_0(M,c)\oplus \text{XRPH}_0(L,c)$ such that $b'\leq b$.

This injective map $\hat{\phi}_0$ will automatically match intervals within $\mathrm{Ord}_0(\partial M, \rho_c)$ to intervals in $\mathrm{Ord}_0(M, \rho_c)\oplus \mathrm{Ord}_0(L, \rho_c)$.
That is, $$\hat{\phi}_0|_{\birth_0^{\Ord}(\partial M, c)}: \birth_0^{\Ord}(\partial M, c) \to \birth_0^{\Ord}(M, c)\sqcup \birth_0^{\Ord}(L, c).$$

By Lemma \ref{lem:Ord0births} we have $\mathfrak{b}_0^{\Ord}(\partial M,\rho_c)=\mathfrak{b}_0^{\Ord}(M,\rho_c)\sqcup \mathfrak{b}_0^{\Ord}(L,\rho_c).$ As $\mathfrak{b}_0^{\Ord}$ is finite, 
we further know $\hat{\phi}_0|_{\mathfrak{b}_0^{\Ord}}$ is the identity by Lemma \ref{lem:identity}.

This implies we can compute $\mathrm{Ord}_0(M,\rho_c)\oplus \mathrm{Ord}_0(L,\rho_c)=\mathrm{Ord}_0(\partial M,\rho_c)$.

We can identify which of the intervals module summands in $\mathrm{Ord}_0(M,\rho_c)\oplus \mathrm{Ord}_0(L,\rho_c)$ are in $\mathrm{Ord}_0(M,\rho_c)$ by checking the sign of the birth values, as discussed in the proof of Theorem \ref{thm:k}.
\end{proof}

We will now handle the case of dimension $n-1$. The case where $r_{min}=r_{min}^{ext}$, when $c$ is closest to the  exterior component of $\partial M$, has a much more straightforward proof. While it could be treated purely as a special case of the general theorem it is pedagogically useful to cover it first.  

\begin{theorem}\label{r_min=r_min^ext} 
Assume $r_{\min}^\mathrm{ext}=r_{\min}$. 
Let $$\text{XRPH}_{n-1}(\partial M, c) = \bigoplus_{[b_i,d_i)\in S_{n-1}^{\partial M}} \mathcal{I}_{[b_i,d_i)}$$ and $J_{n-1}^M\subseteq S_{n-1}^{\partial M}$ be the subset of intervals $[b_i,d_i)$ such that $b_i = (\rho_c(p),\text{Ord})$ with $p\in\text{Crit}(\rho_c^M,(n-1,+1))$ or $b_i = (\rho_c(p),\text{Rel})$ with $p\in\text{Crit}(\rho_c^M,(n-(n-1)-1,-1))=\text{Crit}(\rho_c^M,(0,-1))$. 

Then $$\text{XRPH}_{n-1}(M, c) = \bigoplus_{[b_i,d_i)\in J_{n-1}^{M}} \mathcal{I}_{[b_i,d_i)}.$$
\end{theorem}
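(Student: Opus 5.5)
Our plan is to mimic the proof of Theorem~\ref{thm:0cinM}. The relevant portion of LES~(\ref{LESrel}) is (\ref{LESYAY3}) in degree $n-1$, and the relevant portion of LES~(\ref{LESord}) gives an isomorphism on the ordinary side. On the ordinary side, $H_n(M_s\cup L_s)=H_{n-1}(M_s\cup L_s)=0$, so
\[
H_{n-1}((\partial M)_s)\to H_{n-1}(M_s)\oplus H_{n-1}(L_s)
\]
is an isomorphism for every $s$. On the relative side, (\ref{LESYAY3}) shows that
\[
\phi_{(s,\Rel)}:H_{n-1}(\partial M,(\partial M)^s)\to H_{n-1}(M,M^s)\oplus H_{n-1}(L,L^s)
\]
is surjective for all $s$. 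Its kernel is the image of the connecting map from $H_n(M\cup L,M^s\cup L^s)$ modulo $H_n(M,M^s)\oplus H_n(L,L^s)$. This kernel is one-dimensional exactly for $r_{\min}<s<R^-$ and zero otherwise; this is the computation already made in (\ref{useful1})--(\ref{useful2}). Hence the induced morphism
\[
\phi_{n-1}:\mathrm{XRPH}_{n-1}(\partial M,c)\to \mathrm{XRPH}_{n-1}(M,c)\oplus \mathrm{XRPH}_{n-1}(L,\rho_c)
\]
is surjective, with kernel a single interval module supported on $[(R^-,\Rel),(r_{\min},\Rel))$, or more precisely a one-dimensional kernel living over those parameters.

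Surjectivity gives an induced matching on deaths. Each interval $[b,d)$ of the target is matched to an interval $[b,d')$ of $S^{\partial M}_{n-1}$ with the same birth and $d'\ge d$. Lemma~\ref{lem:births0} identifies the death sets up to the exchange of $(R^-,\Rel)$ and $(r_{\min},\Rel)$. The births agree exactly: $\mathfrak{b}_{n-1}(M)\sqcup\mathfrak{b}_{n-1}(L)=\mathfrak{b}_{n-1}(\partial M)$. So the whole problem is to locate the one interval of $S^{\partial M}_{n-1}$ dying at $(r_{\min},\Rel)$ and show that it is the one which, in the target, dies at $(R^-,\Rel)$ inside $\mathrm{XRPH}_{n-1}(L)$. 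All other intervals should then be matched identically by Lemma~\ref{lem:identity}, applied to the death sets with $(r_{\min},\Rel)$ removed.

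This is where the hypothesis $r_{\min}^{\mathrm{ext}}=r_{\min}$ enters, and it is the main obstacle. The unbounded component $L^{\ext}$ of $L$ has connected boundary $A^{\ext}$. Its class $[A^{\ext}]$ is an $(n-1)$-class of $L$ born at $(r^{\ext}_{\max},\Ord)=(r_{\max},\Ord)$, and by the surjectivity/kernel computation it must die at $(R^-,\Rel)$, where the outer sphere $\partial B(\hat c,R)$ appears. In $\partial M$, the fundamental class of $A^{\ext}$ is born at $(r_{\max},\Ord)$ as well. We will argue that this class survives in relative homology until $(\min_{A^{\ext}}\rho_c,\Rel)=(r^{\ext}_{\min},\Rel)$, which by hypothesis equals $(r_{\min},\Rel)$. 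Precisely, the image of the top class of $A^{\ext}$ in $H_{n-1}(\partial M,(\partial M)^s)$ is nonzero exactly when $A^{\ext}\subseteq(\partial M)^s$ fails, i.e.\ when $s>r^{\ext}_{\min}$. This is the step where cascading would otherwise occur: if $r_{\min}$ were attained on an interior component, the death at $(r_{\min},\Rel)$ would belong to a different bar, and the matching would be shifted.

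We will check this against the elder rule for the essential bars, using that $\partial M$ is a disjoint union of closed $(n-1)$-manifolds, so each component contributes exactly one essential $(n-1)$-bar from its max to its min. Then the bar $[(r_{\max},\Ord),(r_{\min},\Rel))$ of $\partial M$ maps to the bar $[(r_{\max},\Ord),(R^-,\Rel))$ of $L$. Every remaining bar of $S^{\partial M}_{n-1}$ maps to a bar with the same endpoints, by Lemma~\ref{lem:identity} applied to the remaining deaths ($\hat\phi(d)\le d$ and bijective). It remains to sort the bars between $M$ and $L$. We do this exactly as in the proof of Theorem~\ref{thm:k}: by Theorem~\ref{breakpoint}, degree-$(n-1)$ births of $M$ occur at points of $\mathrm{Crit}(\rho_c^M,(n-1,+1))$ or $\mathrm{Crit}(\rho_c^M,(0,-1))$, while births of $L$ occur at the sign-swapped points by Lemma~\ref{lem:swaps}. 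The exceptional bar is born at the maximum of $A^{\ext}$, which is a $(-)$ point for $M$, so it is correctly excluded from $J^M_{n-1}$.
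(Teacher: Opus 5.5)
Your proposal is correct and follows essentially the same route as the paper. The shared steps are: surjectivity of $\phi_{n-1}$ from the two Mayer--Vietoris sequences; the induced matching, which preserves births and has $\hat\phi_{n-1}(d)\le d$; pairing the exterior-boundary bar $[(r_{\max},\Ord),(r_{\min}^{\ext},\Rel))$ of $\partial M$ with the bar $[(r_{\max},\Ord),(R^-,\Rel))$ of $L$; Lemma~\ref{lem:identity} on the remaining deaths; and sign-sorting as in Theorem~\ref{thm:k}. Your explicit kernel computation (one-dimensional for $r_{\min}<s<R^-$) fills in the $L$-bar step that the paper merely asserts; note that $\partial L^{\ext}=A^{\ext}\sqcup\partial B(\hat c,R)$, not $A^{\ext}$ alone.
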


\begin{proof}

From LES (\ref{LESord}) and (\ref{LESrel}), we have that the maps
$$H_{n-1}((\partial M)_s)\to H_{n-1}(M_s)\oplus H_{n-1}(L_s),$$ and 
$$H_{n-1}(\partial M,(\partial M)^s)\to  H_{n-1}(M,M^s)\oplus H_{n-1}(L,L^s)$$ are surjective. So the induced morphism by inclusion
$$\phi_{n-1}: \text{XRPH}_{n-1}(\partial M,c) \to\text{XRPH}_{n-1}(M,c) \oplus \text{XRPH}_{n-1}(L,c)$$ is surjective.

From Lemma \ref{lem:births0} we know $\mathfrak{b}_{n-1}(M,\rho_c)\sqcup  \mathfrak{b}_{n-1}(L,\rho_c)= \mathfrak{b}_{n-1}(\partial M,\rho_c)$ so 
the induced matching theorem in \cite{matching} then tells us that there exists a bijection 
$$\hat{\phi}_{n-1}: \death_{n-1}(\partial M, \rho_c) \to \death_{n-1}(M, \rho_c)  \sqcup \death_{n-1}(L, \rho_c)$$ 
such that if $[b,d)\in S_{n-1}^{\partial M}$, then we have
$[b,\hat{\phi}_{n-1}(d))\in S_{n-1}^{M}\cup S_{n-1}^{L}$ with $\hat{\phi}_{n-1}(d)\leq d$.

Since $M$ is connected, we observe that $[(r_{\max},\text{Ord}),(R^-,\text{Rel}))\in S_{n-1}^{L}$ which corresponds to the exterior boundary component of $M$. Note that $[(r_{\max},\text{Ord}),(r_{\min}^\mathrm{ext},\text{Rel}))\in S_{n-1}^{\partial M}$; the one that corresponds to the exterior boundary component of $M$. Hence, we have $$\hat{\phi}_{n-1}((r_{\min}^\mathrm{ext},\text{Rel}))=(R^-,\text{Rel}).$$

From Lemma \ref{lem:births0}, $\mathfrak{d}_{n-1}(\partial M,\rho_c)\backslash \{(r_{\min},\text{Rel})\} =\mathfrak{d}_{n-1}(M,\rho_c)\sqcup  \mathfrak{d}_{n-1}(L,\rho_c)\backslash \{(R^-,\text{Rel})\}$. Thus $\hat{\phi}_{n-1}|_{\mathfrak{d}_{n-1}(\partial M,\rho_c)\backslash \{(r_{\min},\text{Rel})\}}$ is a bijection of a finite set to itself. By Lemma \ref{lem:identity} it must be the identity.

We have shown that 
\begin{align}\label{eq:k=n-1cimM}
\text{XRPH}_{n-1}(\partial M,c)&\oplus \mathcal{I}_{[(r_{\max},\text{Ord}),(R^-,\text{Rel}))}\\ \nonumber
&= \text{XRPH}_{n-1}(M,c)\oplus \text{XRPH}_{n-1}(L,c)\oplus \mathcal{I}_{[(r_{\max},\text{Ord}),(r_{\min}^\mathrm{ext},\text{Rel}))}. 
\end{align}

We can then identify which of the intervals in $S_{n-1}^{\partial M}\backslash \{[(r_{\max},\text{Ord}),(r_{\min}^\mathrm{ext},\text{Rel}))\}$ are in $S_{n-1}^M$ (as opposed to $S_{n-1}^L$) by checking the sign of the birth values, using the same arguments as in the proof of Theorem \ref{thm:k}.

\end{proof}

Suppose now $r_{\min}^\mathrm{ext}\neq r_{\min}$. Note that we still have a bijection 
$$\hat{\phi}_{n-1}:\death_{n-1}(\partial M, \rho_c) \to \death_{n-1}(M, \rho_c)\sqcup \death_{n-1}(L, \rho_c)$$ as defined in the proof of Theorem \ref{r_min=r_min^ext}. 
Moreover, we still have that $[(r_{\max},\mathrm{Ord}),(R^-,\mathrm{Rel))}\in S_{n-1}^{L}$ and $[(r_{\max},\mathrm{Ord}),(r_{\min}^{\mathrm{ext}},\mathrm{Rel))}\in S_{n-1}^{\partial M}$, so it follows that  $\hat{\phi}_{n-1}((r^\mathrm{ext}_{\min}, \mathrm{Rel}))=(R^-,\mathrm{Rel})$. However, since $r_{\min}^\mathrm{ext}\ne r_{\min}$, we still need to determine $\hat{\phi}_{n-1}^{-1}((r^\mathrm{ext}_{\min}, \mathrm{Rel}))$.

Consider the essential classes in $Ess_{n-1}(\partial M, \rho_c))$, which can be formulated in terms of the connected components of $\partial M$.The connected components consist of one exterior boundary component $A^{\ext}$ (the unique  connected component in the infinite component in $\R^n \backslash M$) and an indexed set $\{A_i\}$ of interior connected components.  Let $r(b_i)=\max_{x\in A_i}\rho_c(x)$, and $r(d_i)=\min_{x\in A_i} \rho_c(x)$. Each $[A_i]$ corresponds to an essential $n-1$ class of $\partial M$ with birth parameter $(r(b_i), \mathrm{Ord})$ and death parameter $(r(d_i), \mathrm{Rel})$. That is, $Ess_{n-1}(\partial M, \rho_c))$ has intervals $[(r(b_i), \textrm{Ord}), (r(d_i), \textrm{Rel}))$.

Let $j_1$ be the index such that $r(d_{j_1}) < r^\mathrm{ext}_{\min}$ and for all $i\neq j_1$, either $r(b_i)<r(b_{j_1})$ 
and/or $r(d_i)>r^\mathrm{ext}_{\min}$. We claim that $[(r(b_{j_1}), \mathrm{Ord}), (r_{\min}^\mathrm{ext}, \mathrm{Rel}))\in S_{n-1}^{M}$. 

Consider the $(n-1)$-chain 
$$\sigma=\sum_{\{k\,:\,r(b_k)\leq r(b_{j_1})\}}[A_k].$$ 
As we only include components that are entirely contained in $M_{r(b_{j_1})}$, we have $\sigma\in H_{n-1}(M_{r(b_{j_1})})$. Since the coefficent of $[A_{j_1}]$ is non-zero, we have that $\sigma$ is in the cokernel of the transition map $H_{n-1}(M_{r(b_{j_1})-\epsilon}, \emptyset)\to H_{n-1}(M_{r(b_{j_1})}, \emptyset)$. That is, $\sigma$ is born at $(r(b_{j_1}),\mathrm{Ord})$.

Note that for $t>r_{\min}^{\ext}$ we have $H_{n-1}(M, M^t)$ has a basis $\{[A_k]\,:\, r(d_k)<t\}$.

At $(r^\mathrm{ext}_{\min}, \mathrm{Rel})$, the $(n-1)$-cycle $\sum_i [A_i]$ becomes a relative boundary. This is because $\sum_i [A_i]$ differs from $[A^\mathrm{ext}]$ by the $n$-chain $M$, and we have $[A^\mathrm{ext}]$ becomes a relative boundary at $(r^\mathrm{ext}_{\min}, \mathrm{Rel})$. 

Now $\sum_k [A_k] - \sigma=\sum_{\{i\,:r(b_i) > r(b_{j_1})\}} [A_i]$. By construction if $r(b_i)> r(b_{j_1})$ then $r(d_i)>r_{\min}^{\ext}$. This implies that $\sum_k [A_k]-\sigma$ is a relative boundary in $H_{n-1}(M, M^{r_{\min}^{\ext}})$ as $r(d_i) > r^\mathrm{ext}_{\min}$ implies $A_i\subset M^{r^\mathrm{ext}_{\min}}$. 

From these arguments we can deduce that $[(r(b_{j_1}), \mathrm{Ord}), (r_{\min}^\mathrm{ext}, \mathrm{Rel}))\in S_{n-1}^{M}$. Combined with $[(r_{\max},\mathrm{Ord}),(R^-,\mathrm{Rel))} \in S_{n-1}^{L}$ we know  $\hat{\phi}_{n-1}((r(b_{j_1}),\mathrm{Rel})=(r_{\min}^\mathrm{ext},\mathrm{Rel})$. If $r(j_1)=r_{\min}$ then we could proceed as in the proof of Theorem \ref{r_min=r_min^ext} to deduce $\text{XRPH}_{n-1}(M, c)$ from $\text{XRPH}_{n-1}(\partial M, c)$. If $r(j_1)\neq r_{\min}$ then we would still need to determine $\hat{\phi}_{n-1}((r(j_1), \textrm{Rel}))$. This process creates a cascading effect through the deaths of the essential classes.

\begin{defn}\label{defn:cascade}
Let $\psi_{\partial M}: \mathfrak{d}_{n-1}(Ess_{n-1}(\partial M), \rho_c) \to \mathfrak{b}_{n-1}(Ess_{n-1}(\partial M), \rho_c)$ match each death with its corresponding birth.
For each death $d\in \mathfrak{d}_{n-1}(Ess_{n-1}(\partial M), \rho_c)$ we can consider the set of deaths that occur after it $\{\hat{d}\in \mathfrak{d}_{n-1}(Ess_{n-1}(\partial M), \rho_c)\mid \hat{d}>d\}$. This set will contain $(r_{\min}, \textrm{Rel})$ for all $d\neq (r_{\min}, \textrm{Rel})$ and hence will never be empty. Define the cascading function \begin{align*}
\Cascade: \mathfrak{d}_{n-1}(Ess_{n-1}(\partial M), \rho_c) \backslash \{(r_{\min}, \text{Rel})\}  & \to 
\mathfrak{d}_{n-1}(Ess_{n-1}(\partial M), \rho_c)\\  
d&\mapsto \argmax_{\{\hat{d}\mid \hat{d}> d\}} \psi_{\partial M}(\hat{d}).
\end{align*} That is, the death of the youngest essential class that dies after $d$.
\end{defn}

We will be interested in a sequence of death values that are iterations of this cascade function, starting with $(r_{\min}^{\text{ext}}, \text{Rel})$. 

\begin{defn}\label{def:cascadeSeq}
The \emph{death cascade sequence} of $M$ is the sequence $d_{j_1}, d_{j_2}, \ldots d_{j_K}$ with $d_{j_1}=(r_{\min}^{\text{ext}}, \text{Rel})$, $d_{j_K}=(r_{\min}, \text{Rel})$ and $d_{j_{l+1}}=\Cascade(d_{j_{l}})$ for $l<K$.   
\end{defn}

Note that if $r_{\min}= r_{\min}^{\text{ext}}$ then the death cascade sequence is a single element. From now on we will focus on death cascade sequences with at least two elements. 

\begin{lemma}\label{lem:decreaseradii}
The death radii, and the birth radii, decrease along the death cascade sequence. That is $$r(d_{j_1})>r(d_{j_2})>\ldots r(d_{j_K})$$ 
and 
$$r(b_{j_1})>r(b_{j_2})>\ldots r(b_{j_K}).$$
\end{lemma}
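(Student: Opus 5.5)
We argue directly from Definition~\ref{defn:cascade}, proving both chains of strict inequalities by establishing one step $d_{j_l}\mapsto d_{j_{l+1}}=\Cascade(d_{j_l})$ at a time; induction along the sequence then gives the full chains. Throughout we use the order on $\Theta$: for relative parameters, $(s,\mathrm{Rel})<(t,\mathrm{Rel})$ iff $s>t$. We also use that the essential classes of $\mathrm{Ess}_{n-1}(\partial M,\rho_c)$ are indexed by the boundary components, with birth $(r(b_i),\mathrm{Ord})$ and death $(r(d_i),\mathrm{Rel})$, where $r(b_i)>r(d_i)$ are the maximum and minimum of $\rho_c$ on the component. By genericity all of these values are distinct.

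\emph{Death radii.} By definition, $\Cascade(d)$ is chosen among the deaths $\hat d$ with $\hat d>d$ in $\Theta$. For relative parameters this means $r(\hat d)<r(d)$. Hence $r(d_{j_{l+1}})<r(d_{j_l})$ at every step, which gives $r(d_{j_1})>r(d_{j_2})>\cdots>r(d_{j_K})$. The sequence is finite and terminates at $(r_{\min},\mathrm{Rel})$: the death radii strictly decrease, and $(r_{\min},\mathrm{Rel})$ is the last relative death. That last claim holds because $r_{\min}$ is attained on some interior component when $r_{\min}^{\ext}\neq r_{\min}$, and $\Cascade$ is defined on every other death.

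\emph{Birth radii.} This is the main step. Fix $l$ and write $d=d_{j_l}$ and $d'=d_{j_{l+1}}$, with births $r(b_{j_l})$ and $r(b_{j_{l+1}})$. By definition, $d'$ maximises $\psi_{\partial M}$ over $\{\hat d>d\}$. So $r(b_{j_{l+1}})$ is the largest birth radius among the components whose minimum radius lies below $r(d)$.

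To compare this with $r(b_{j_l})$, we split into two cases.

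\begin{itemize}
\item For $l\ge 1$ with $d=d_{j_l}$ itself produced by the cascade: $d_{j_l}=\Cascade(d_{j_{l-1}})$ maximises the birth among all $\hat d>d_{j_{l-1}}$. The set $\{\hat d>d_{j_l}\}$ is a subset of $\{\hat d>d_{j_{l-1}}\}$ that excludes $d_{j_l}$ itself. Since births are distinct, its maximum birth is strictly smaller than $r(b_{j_l})$, so $r(b_{j_{l+1}})<r(b_{j_l})$.
\item For the base case $l=1$: $d_{j_1}=(r_{\min}^{\ext},\mathrm{Rel})$ belongs to $A^{\ext}$, with birth $r(b_{j_1})=r_{\max}$. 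Every interior component $A_i$ satisfies $r(b_i)<r_{\max}$, because $r_{\max}$ is attained only on $A^{\ext}$. Hence $r(b_{j_2})<r_{\max}=r(b_{j_1})$. Applying the argument of the first case inductively from $l=2$ onward gives the rest of the chain.
\end{itemize}

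The expected obstacle is bookkeeping rather than mathematics. We must be careful with the reversed order on $\mathcal{R}$, with the special role of the exterior component (whose death starts the sequence but whose birth is $r_{\max}$), and with the index clash between the labels $j_l$ and the earlier $j_1$ of the discussion preceding Definition~\ref{defn:cascade}. That earlier $j_1$ indexes the first interior component reached, which corresponds to $d_{j_2}$ here. We also check that the argmax is well defined by distinctness of critical values, since $\rho_c$ is Morse.
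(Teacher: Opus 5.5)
Your proof is correct and follows essentially the same route as the paper's. The death radii decrease because $\Cascade$ selects from $\{\hat d>d\}$ under the reversed order on $\mathcal{R}$. For the births, the first step uses that $r_{\max}$ is attained only on $A^{\ext}$, and each later step uses that $d_{j_l}$ maximises $\psi_{\partial M}$ over $\{\hat d>d_{j_{l-1}}\}$, a set that also contains $d_{j_{l+1}}$.
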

\begin{proof}
As $d_{j_{l+1}}\in \{d\mid d> d_{j_l}\}$ we automatically have $(r(d_{j_l}), \Rel)= d_{j_{l}}>d_{j_{l-1}}=(r(d_{j_{l}}), \Rel)$. Thus $r(d_{j_l})<r(d_{j_{l-1}})$. 

By construction $r_{\max}$ is the last birth in the $Ess_{n-1}(\partial M, \rho_c)$ so $r(b_{j_1})>r(b_{j_l})$ for all $l\neq 1$. 

Let $1<l<K$. Both $d_{j_l}$ and $d_{j_{l+1}}$ are in the set $\{d \mid d\geq d_{j_{l-1}}\}$. By construction $\psi_{\partial M}(d_{j_l})>\psi_{\partial M}(d_{j_{l+1}})$ and hence $r(b_{j_l})>r(b_{j_{l+1}})$.
\end{proof}

\begin{lemma}\label{lem:orderbirths}
Consider an interior connected component $A_i$ of $\partial M$, with birth $(r(b_i), \textrm{Ord})$ and death $(r(d_i), \textrm{Rel})$, and $d_{j_l}$ in the death cascade. 
\begin{itemize}
\item If $r(b_i) > r(b_{j_{l}})$ then $r(d_i)>r(d_{j_{l+1}})$. 
\item If $r(d_i)< r(d_{j_{l+1}})$ then $r(b_i) < r(b_{j_l})$ 
\end{itemize}
\end{lemma}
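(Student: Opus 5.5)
The plan is to derive both bullets directly from the definition of the cascade function (Definition~\ref{defn:cascade}), read in terms of radii. Recall the order on $\mathcal{R}$ is reversed, so $(r,\Rel) > (r',\Rel)$ iff $r<r'$. Thus $d_{j_{l+1}}=\Cascade(d_{j_l})$ is, among all essential deaths $\hat d$ with $r(\hat d)<r(d_{j_l})$, the one whose matched birth $\psi_{\partial M}(\hat d)$ is maximal. In radii: $r(d_{j_{l+1}})<r(d_{j_l})$, and $r(b_{j_{l+1}})\ge r(b_k)$ for every interior component $A_k$ with $r(d_k)<r(d_{j_l})$. The set of deaths after $d_{j_l}$ is nonempty, since it contains $(r_{\min},\Rel)$ whenever $l<K$. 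Distinctness of critical values (the Morse assumption) makes all these inequalities strict whenever indices differ.

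The second bullet is the contrapositive of the first, since $\neg(r(d_i)>r(d_{j_{l+1}}))$ is $r(d_i)\le r(d_{j_{l+1}})$, and equality only occurs for $i=j_{l+1}$. So the core claim is: if $r(b_i)>r(b_{j_l})$ then $r(d_i)>r(d_{j_{l+1}})$. I would rather prove a slightly stronger statement by induction on $l$:
\[
(\ast_l)\qquad r(b_i)>r(b_{j_l}) \;\Longrightarrow\; r(d_i)>r(d_{j_l}).
\]
This is the property used in the text to define $j_1$. The base case $l=1$ is the defining property of $j_1$ from the discussion preceding Definition~\ref{defn:cascade}: any $i$ with $r(b_i)>r(b_{j_1})$ has $r(d_i)>r^{\mathrm{ext}}_{\min}=r(d_{j_1})$. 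Reading $d_{j_1}$ as the exterior death, this holds because $b_{j_1}$ is the largest birth among components dying after $r^{\ext}_{\min}$.

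For the inductive step, assume $(\ast_l)$ and take $i$ with $r(b_i)>r(b_{j_{l+1}})$. There are two cases.
\begin{itemize}
\item If $r(d_i)\ge r(d_{j_l})$, then $r(d_i)>r(d_{j_{l+1}})$ by Lemma~\ref{lem:decreaseradii}.
\item If $r(d_i)<r(d_{j_l})$, then $d_i$ lies in the set over which $\Cascade(d_{j_l})$ maximises the birth. Hence $r(b_i)\le r(b_{j_{l+1}})$, with equality only if $i=j_{l+1}$, which contradicts the hypothesis.
\end{itemize}
So $(\ast_{l+1})$ holds. Finally, the first bullet follows from $(\ast_l)$ combined with $r(d_{j_l})>r(d_{j_{l+1}})$ from Lemma~\ref{lem:decreaseradii}. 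In fact it can be obtained even more directly by the same two-case split at level $l$. Suppose $r(b_i)>r(b_{j_l})$ and $r(d_i)\le r(d_{j_{l+1}})<r(d_{j_l})$. Then $d_i$ is a candidate for $\Cascade(d_{j_l})$, so $r(b_i)\le r(b_{j_{l+1}})<r(b_{j_l})$ by Lemma~\ref{lem:decreaseradii}, which is a contradiction.

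The main obstacle is bookkeeping rather than mathematics, in three places. The reversed order on $\mathcal{R}$ must be translated carefully into radii. The base case $l=1$ must be treated consistently, since there $d_{j_1}$ is the exterior component's death rather than an interior $A_i$. And the strict versus non-strict inequalities must be handled via the genericity of critical values. I would also check that the $\argmax$ is well defined, which again follows from distinct birth values.
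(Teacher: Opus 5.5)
Your proof is correct and is essentially the paper's argument. The paper notes that $r(d_i)<r(d_{j_{l+1}})<r(d_{j_l})$ makes $d_i$ a candidate in the maximisation defining $\Cascade(d_{j_l})$, so $r(b_i)<r(b_{j_{l+1}})<r(b_{j_l})$, and then takes the contrapositive for the other bullet; this is your final ``more direct'' argument run in reverse. Your induction on $(\ast_l)$ is superfluous, because the inductive step never uses $(\ast_l)$, so the muddled base case (which mixes the paper's two meanings of $j_1$, and is simply vacuous when $r(b_{j_1})=r_{\max}$) plays no role.
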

\begin{proof}
If $r(d_i) < r(d_{j_{l+1}})$ then $d_i>d_{j_{l}}$. By construction $\psi_{\partial M}(d_{j_{l+1}})>\psi_{\partial M}(d_{i})$ and hence $r(b_i)<r(b_{j_{l+1}})$. The second statement is the contrapositive statement, using that all critical values are distinct.
\end{proof}

It will be useful to have notation for the set of indices that in the death cascade sequence that correspond to interior connected components of $\partial M$. These are 
$\{j_1, \ldots j_{K}\}$ and we denote by the set by $\DC$. 

\begin{lemma}\label{lem:Ess_n-1(L)}
$Ess_{n-1}(L,\rho_c)=\mathcal{I}_{[(r_{\max}, Ord), (R^-,Rel))}$. 
\end{lemma}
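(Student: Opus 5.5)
We need to show that $\mathrm{Ess}_{n-1}(L,\rho_c)$ consists of exactly one interval, $[(r_{\max},\mathrm{Ord}),(R^-,\mathrm{Rel}))$. The plan is to identify the essential $(n-1)$-classes of $L$ with $H_{n-1}(L)$ and then compute birth and death directly from the geometry of $L$. The number of essential $(n-1)$-intervals equals $\beta_{n-1}(L)$, since essential classes are exactly those surviving from $H_{n-1}(L_{\sup})=H_{n-1}(L)$ into the relative part.

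\textbf{Step 1: $\beta_{n-1}(L)=1$.} The components of $L$ are the closures of the components of $\overline{B(\hat c,R)}\setminus M$. Because $M$ is connected, each bounded component $L'$ (a hole of $M$) is a compact $n$-manifold in $\R^n$ whose boundary is a single interior component $A_i$ of $\partial M$. By Alexander duality, since $\R^n\setminus L'$ is connected (it contains the connected set $M$ together with everything outside), $H_{n-1}(L')=\tilde H_0(\R^n\setminus L')=0$. The unique outer component $L^{\mathrm{out}}$ is the closed ball minus the interior of the filled-in $M$. Its complement has two components: the interior of the filled $M$, and the exterior of the ball. Hence $H_{n-1}(L^{\mathrm{out}})\cong\Z_2$, generated by $[A^{\mathrm{ext}}]$, which is homologous in $L^{\mathrm{out}}$ to $[\partial B(\hat c,R)]$. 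Alternatively, the Mayer--Vietoris sequence (\ref{LESord}) at $s$ large gives the same count, using Lemma \ref{lem:births0}.

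\textbf{Step 2: the birth is at $(r_{\max},\mathrm{Ord})$.} The generator is represented by the sphere $A^{\mathrm{ext}}$, which is contained in $L_s$ once $s\ge r_{\max}$. So the class exists in $H_{n-1}(L_{r_{\max}})$. For $s<r_{\max}$, the sublevel set $L_s$ is contained in $\overline{B(c,s)}$ minus $M$ and misses the point of $A^{\mathrm{ext}}$ realizing $r_{\max}$. I would argue via Alexander duality, or via the Mayer--Vietoris isomorphism $H_{n-1}((\partial M)_s)\cong H_{n-1}(M_s)\oplus H_{n-1}(L_s)$, that no class of $L_s$ maps to the generator. Concretely, a cycle in $L_s$ separating the filled-in $M$ from infinity would have to avoid the part of $M$ outside $B(c,s)$; that part is nonempty, lies on the outer side, and connects to $c$'s side through $M$. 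Hence such a cycle cannot link the hole. So the essential birth is $(r_{\max},\mathrm{Ord})$, consistent with Theorem \ref{breakpoint}, since $r_{\max}$ is a $(n-1,-1)$ critical point of $\rho_c^M$ and hence $(n-1,+1)$ for $L$ by Lemma \ref{lem:swaps}.

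\textbf{Step 3: the death is at $(R^-,\mathrm{Rel})$.} For $s>R^-$, the superlevel set $L^s$ is a nonempty cap of $\partial B(\hat c,R)$ together with a thin collar around it. The class $[\partial B(\hat c,R)]$ is nonzero in $H_{n-1}(L,L^s)$ because the cap does not contain the whole sphere. For $s\le R^-$, $L^s$ contains the entire outer sphere $\partial B(\hat c,R)$, so the generator becomes zero in $H_{n-1}(L,L^s)$. This is the same fact already used in Lemma \ref{lem:births0}, where $(R^-,\mathrm{Rel})\in\mathfrak d_{n-1}(L,\rho_c)$ and is absent from $\mathfrak d_{n-1}(\partial M,\rho_c)$.

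The main obstacle is Step 2: rigorously excluding an earlier birth, and checking that the essential class in the interval decomposition is the one carrying this birth rather than some ordinary class of $L$ being paired differently. To handle this, I would use the elder-rule uniqueness: there is exactly one essential interval, and its birth is the first parameter $s$ at which $H_{n-1}(L_s)\to H_{n-1}(L)$ becomes nonzero. That parameter is computed as above.
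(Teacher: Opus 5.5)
Your proposal is correct and follows the paper's own route. Split $L$ into two kinds of component. The unbounded component has generator $[A^{\mathrm{ext}}]=[\partial B(\hat c,R)]$, born at $(r_{\max},\mathrm{Ord})$ and dying at $(R^-,\mathrm{Rel})$ once $L^s$ contains the whole outer sphere. The hole components have vanishing $H_{n-1}$ by Alexander duality. Your Step 2 actually supplies the justification of the birth that the paper leaves implicit, and it is cleanest stated as follows: for $s<r_{\max}$ the set $S^n\setminus\overline{B(c,s)}$ is connected and contains both $\infty$ and points of $\mathrm{int}\,M$, so no cycle in $L_s\subseteq\overline{B(c,s)}$ can separate $\mathrm{int}\,M$ from $\infty$, and hence none maps to the generator of $H_{n-1}(L)$.
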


\begin{proof}
Consider the connected components of $L$. There is exactly one that is in the infinite component of $\R^n\backslash M$. Call this component $L^{\text{ext}}$. Observe that the boundary of $L^{\text{ext}}$ is the exterior boundary component $A^{\text{ext}}$ of $\partial M$ union a sphere of radius $R$, that is $\partial (L^{\text{ext}})=A^{\text{ext}}+ \partial B(\hat{c}, R)$ (considered as a chain over $\mathbb{Z}_2$). In $H_{n-1}(L, L^s)$, $\partial B(\hat{c},R)$ becomes a relative boundary precisely when $s=R^-$. This implies that $A^{\text{ext}}=\partial (L^{\text{ext}})-\partial B(\hat{c}, R)$ also becomes a relative boundary. This implies that $\mathrm{XRPH}_{n-1}(L, c)$ has a essential class corresponding to the interval $[(r_{\max}, \mathrm{Ord}), (R^-, \mathrm{Rel}))$.

Let $L'$ be a different connected component of $L$, and consider it as a subset of $S^n$, the one-point compactification of $\R^n$.  Since $M$ is connected and $L'$ is not in the infinite component of $\R^n\backslash M$ we also know that $S^n\backslash L'$ is connected. By Alexander duality $H_{n-1}(L')= \tilde{H}^0(S^n \backslash L')=0$. 
This implies that $L'$ contributes no essential persistent homology classes to $\mathrm{XRPH}_{n-1}(L, c)$. The result follows as $\mathrm{XRPH}_{n-1}(L,c)$ is the direct sum of the extended radial persistent homology of its connected components.
\end{proof}

For each $i\notin \DC$ let $$\sigma_i:=[A_i].$$ 
For each $j_l\in \DC$ let $$\sigma_{j_l}:=\sum_{\{k\,\mid \, r(b_k)\leq r(b_{j_l})\}}[A_k].$$

\begin{lemma}\label{lem:basis}
Fix $t>0$. Then $\{\sigma_i \mid r(b_i)\leq t\}$ is a basis for $H_{n-1}(M_t)$.
\end{lemma}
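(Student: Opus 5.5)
The plan is first to establish that $\{[A_i] \mid r(b_i)\le t\}$, the classes of the interior boundary components lying entirely in $M_t$, is a basis of $H_{n-1}(M_t)$. Then I will show that passing to the $\sigma_i$ is an invertible (unitriangular) change of basis.

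For the first step, use LES~(\ref{LESord}). Since $M_t\cup L_t=\overline{B(c,t)}$ is contractible, the map $H_{n-1}((\partial M)_t)\to H_{n-1}(M_t)\oplus H_{n-1}(L_t)$ is an isomorphism. The group $H_{n-1}((\partial M)_t)$ has as basis the fundamental classes of those components $A$ of $\partial M$ entirely contained in $M_t$ (a closed $(n-1)$-manifold with a point removed has trivial $H_{n-1}$). This also covers pieces of components: for a compact $(n-1)$-submanifold-with-boundary of a connected closed $(n-1)$-manifold, $H_{n-1}$ vanishes unless it is the whole component.

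These are $A^\ext$ when $t\ge r_{\max}$, together with the $A_i$ satisfying $r(b_i)\le t$. Their images in $H_{n-1}(M_t)\oplus H_{n-1}(L_t)$ form a basis. Compare dimensions by splitting along the two factors.
\begin{itemize}
\item When $t\ge r_{\max}$, $A^\ext$ is homologous in $M_t=M$ to $\sum_i [A_i]$, because $\partial M=A^\ext+\sum_i A_i$ as chains.
\item In $L_t$, the class $[A_i]$ vanishes because $A_i$ bounds the component of $L$ it encloses. That component is contained in $L_t$: its points have $\rho_c$ at most $\max_{A_i}\rho_c$, by the earlier observation that maxima occur on the outer boundary. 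If this needs care, I would argue via Alexander duality as in Lemma~\ref{lem:Ess_n-1(L)}.
\item Hence $H_{n-1}(L_t)$ is spanned by the image of $[A^\ext]$ alone, when $t\ge r_{\max}$.
\end{itemize}
By injectivity and surjectivity of the map, $H_{n-1}(M_t)$ is spanned by the images of the $[A_i]$ with $r(b_i)\le t$. Counting dimensions, $\dim H_{n-1}(M_t)=\#\{i : r(b_i)\le t\}$, with the $A^\ext$ class accounted for by $H_{n-1}(L_t)$. So these classes are a basis.

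For the second step, each $\sigma_i$ with $r(b_i)\le t$ lies in $H_{n-1}(M_t)$. For $j_l\in\DC$, $\sigma_{j_l}$ is a sum of $[A_k]$ with $r(b_k)\le r(b_{j_l})\le t$, so every summand lies in $M_t$. Now order the indices by $r(b_i)$; these values are distinct by the Morse assumption. Each $\sigma_i$ equals $[A_i]$ plus a combination of $[A_k]$ with strictly smaller $r(b_k)$. The transition matrix from $\{[A_i]\}$ to $\{\sigma_i\}$ is therefore unitriangular over $\Z_2$, hence invertible, which gives the lemma.

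I expect the main obstacle to be the first step: justifying cleanly that $[A_i]=0$ in $H_{n-1}(L_t)$, and that $H_{n-1}(M_t)$ has no classes other than those coming from whole boundary components. The Mayer–Vietoris isomorphism handles the latter once the description of $H_{n-1}((\partial M)_t)$ is in place. For the former, I would first try the containment argument for the enclosed component of $L$, falling back on Alexander duality if needed.
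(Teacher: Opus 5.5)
Your proposal is correct and follows the same two-step structure as the paper. First, the classes $[A_i]$ of interior boundary components with $r(b_i)\le t$ form a basis of $H_{n-1}(M_t)$; second, passing to the $\sigma_i$ is a unitriangular change of basis. The paper simply asserts the first step as geometrically clear, whereas you justify it via the Mayer--Vietoris isomorphism $H_{n-1}((\partial M)_t)\cong H_{n-1}(M_t)\oplus H_{n-1}(L_t)$ and the observation that each $A_i$ bounds its enclosed component of $L$, which lies in $L_t$ once $r(b_i)\le t$.
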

\begin{proof}
Geometrically, it is clear that $\{[A_i] \mid \max\{\rho_c(A_i)\}\leq t\}$ is a basis for $H_{n-1}(M_t)$. By construction the map from $\{[A_i] \mid \max\{\rho_c(A_i)\}\leq t\}$ to $\{\sigma_i \mid r(b_i)\leq t\}$ is a change of basis.
\end{proof}

We also need to investigate bases for the relative homology $H_{n-1}(M, M^t)$. For this, it will be convenient to split into the two cases of when $t>r_{\min}^{\ext}$ and $t\leq r_{\min}^{\ext}$.

\begin{lemma}\label{lem:relativebasisbig}
Fix $t>r_{\min}^{\ext}$. Then $$\{\sigma_i \mid r(d_i)< t\}$$ is a basis for $H_{n-1}(M, M^t)$.
\end{lemma}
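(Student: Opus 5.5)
The plan is to prove the statement in two stages. First I establish that the ``naive'' classes $\{[A_i] \mid r(d_i)<t\}$ form a basis of $H_{n-1}(M,M^t)$. Then I show that passing from the $[A_i]$ to the $\sigma_i$ is an invertible, unitriangular change of basis, exactly as in the proof of Lemma~\ref{lem:basis}.

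For the first stage I use the long exact sequence of the pair $(M,M^t)$:
$$H_{n-1}(M^t)\to H_{n-1}(M)\to H_{n-1}(M,M^t)\to H_{n-2}(M^t)\to H_{n-2}(M).$$
\begin{enumerate}[(i)]
\item By the Alexander duality argument used in Lemma~\ref{lem:Ess_n-1(L)}, $H_{n-1}(M)$ has basis $\{[A_i]\}$, indexed by the interior boundary components. Here $[A^{\ext}]=\sum_i[A_i]$, because the two differ by the boundary of the $n$-chain $M$.
\item The image of $H_{n-1}(M^t)$ in $H_{n-1}(M)$ is spanned by the components lying entirely in $M^t$, namely those with $r(d_i)\geq t$. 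These classes therefore die in the quotient.
\item The hypothesis $t>r_{\min}^{\ext}$ is used for the connecting map. Since $A^{\ext}$ meets $\{\rho_c<t\}$, the region near the outer boundary where $\rho_c<t$ fills in the would-be $(n-1)$-cycles of $M^t$ that are not already cycles in $M$. Consequently the map $H_{n-1}(M,M^t)\to H_{n-2}(M^t)$ contributes no new $(n-1)$-dimensional relative classes beyond those coming from $H_{n-1}(M)$.
\end{enumerate}
This recovers the observation already stated before Definition~\ref{defn:cascade}: for $t>r_{\min}^{\ext}$, $H_{n-1}(M,M^t)$ has basis $\{[A_k]\mid r(d_k)<t\}$. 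I would cite that observation and only sketch steps (i)--(iii) to justify it.

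For the second stage, order the indices $i$ with $r(d_i)<t$ by increasing birth radius $r(b_i)$.
\begin{itemize}
\item If $i\notin\DC$, then $\sigma_i=[A_i]$.
\item If $j_l\in\DC$, then
$$\sigma_{j_l}=[A_{j_l}]+\sum_{\{k\,\mid\, r(b_k)<r(b_{j_l})\}}[A_k].$$
In $H_{n-1}(M,M^t)$, every summand with $r(d_k)\geq t$ vanishes, since $A_k\subset M^t$. What remains is $[A_{j_l}]$ plus a combination of basis vectors $[A_k]$ with strictly smaller birth radius (birth values are distinct by the Morse assumption).
\end{itemize}
So the matrix expressing $\{\sigma_i\mid r(d_i)<t\}$ in the basis $\{[A_i]\mid r(d_i)<t\}$ is unitriangular over $\Z_2$, hence invertible, and the $\sigma_i$ form a basis.

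I expect the main obstacle to be the bookkeeping of the first cascade element. The death $d_{j_1}=(r_{\min}^{\ext},\Rel)$ belongs to the exterior component, while the relative basis is indexed by interior components. I would treat this as follows. The index $j_1$ is identified with the interior component realising $\Cascade$'s first step, as in the claim proved just before Definition~\ref{defn:cascade}. Alternatively, one checks that $\sigma_{j_1}$, a sum over all components born by $r_{\max}$, reduces in $H_{n-1}(M,M^t)$ to a class whose leading term is a single $[A_k]$ not already accounted for. In either case the key fact is again $[A^{\ext}]=\sum_i[A_i]$, and I would verify carefully that no index is counted twice, since double counting would break linear independence. The second delicate point is step (iii), the vanishing of the extra contribution from the connecting homomorphism. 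If the citation of the earlier observation is deemed insufficient, I would prove it with a Mayer--Vietoris argument for $M$ and $L$ with $t>r_{\min}^{\ext}$, in the style of Lemma~\ref{lem:births0}.
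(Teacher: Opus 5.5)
Your two-stage route is the paper's own: its proof also begins with ``$\{[A_i]\mid r(d_i)<t\}$ is a basis for $H_{n-1}(M,M^t)$'' and then inverts a unitriangular matrix. The genuine gap is step (iii). It is false, and with it the claim that the $[A_i]$ span all of $H_{n-1}(M,M^t)$. The paper asserts this claim without proof too, so the defect lies in the statement itself, not only in your argument.

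By exactness, the image of the connecting map $H_{n-1}(M,M^t)\to H_{n-2}(M^t)$ is $\ker\bigl(H_{n-2}(M^t)\to H_{n-2}(M)\bigr)$. This kernel carries the $\Rel_{n-1}$ classes, and the hypothesis $t>r_{\min}^{\ext}$ does nothing to kill it. Take $n=2$, $M$ a U-shaped disc, and $c$ outside $M$ below its base. There are no interior boundary components, so your set is empty. Yet for $t$ between the radius of the bottom of the notch and the smaller of the two arm-tip radii (so $t>r_{\min}^{\ext}=r_{\min}$), $M^t$ is two arm pieces and $H_1(M,M^t)\cong\ker(\Z_2^2\to\Z_2)\cong\Z_2$. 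An annulus with $c$ in its hole and two outward arms fails in the same way, now with a nontrivial death cascade. The paper's own Theorem~\ref{dim} carries a $\Rel_{n-1}(M,\rho_c)$ summand for exactly this reason. Note also that your justification of (iii) is about $(n-1)$-cycles of $M^t$, i.e.\ about step (ii), not about the connecting map.

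What is true is the statement for $V_t:=\im\bigl(H_{n-1}(M)\to H_{n-1}(M,M^t)\bigr)$. This is all that Proposition~\ref{prop:ess_n-1} uses, since the relative classes are treated separately in Theorem~\ref{dim}. To prove it, drop (iii) and prove (ii), which is where $t>r_{\min}^{\ext}$ really enters:
\begin{itemize}
\item $S^n\setminus M^t=B(c,t)\cup(S^n\setminus M)$.
\item The open ball $B(c,t)$ meets $A^{\ext}$, hence the unbounded component of $\R^n\setminus M$.
\item $B(c,t)$ meets exactly those holes bounded by an $A_i$ with $r(d_i)<t$.
\item Alexander duality then gives $H_{n-1}(M^t)$ the basis $\{[A_i]\mid r(d_i)\geq t\}$. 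So $[A^{\ext}]=\sum_k[A_k]$ survives, and $\{[A_i]\mid r(d_i)<t\}$ is a basis of $V_t$.
\end{itemize}
For $r_{\min}<t\leq r_{\min}^{\ext}$ the class $[A^{\ext}]$ is killed instead, which is the extra relation in Lemma~\ref{lem:relativebasissmall}.

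Your unitriangular step then works in $V_t$, provided the index set excludes $j_1$. That index is the exterior component, with $b_{j_1}=(r_{\max},\Ord)$ and $d_{j_1}=(r_{\min}^{\ext},\Rel)$. Your first option, read as a relabelling, amounts to this exclusion. Your second option fails: $\sigma_{j_1}=\sum_k[A_k]$ reduces in $V_t$ to $\sum_{r(d_k)<t}[A_k]$. Its leading term is the same $[A_m]$ as that of $\sigma_m$, where $m$ is the youngest interior component with $r(d_m)<t$. So including $\sigma_{j_1}$ destroys independence rather than supplying a new basis vector.
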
    
\begin{proof}
We have $\{[A_i] \mid r(d_i)< t\}$ is a basis for $H_{n-1}(M, M^t)$. Every element of $H_{n-1}(M, M^t)$ can be uniquely written as a linear combination of $\{[A_i] \mid r(d_i)< t\}$.

As $t>r_{\min}^{\ext}$ we also have $t>r(d_{j_l})$ for all $j_l\in \DC$ by Lemma \ref{lem:decreaseradii}. For each $j_l\in \DC$ we have $\sigma_{j_l}$ is homologous to $\sum_{\{k\mid r(b_k)\leq r(b_{j_l}) \text{ and } r(d_k)< t\}}[A_k]$.

If we order the $\{[A_i] \mid r(d_i)< t\}$ by increasing birth times we can write $\{\sigma_i \mid r(d_i)< t\}$ as a matrix with each column its linear combination in terms of $\{[A_i] \mid r(d_i)< t\}$. As this matrix is triangular with non-zero diagonal entries we see $\{\sigma_i \mid r(d_i)< t\}$ must also be a basis for $H_{n-1}(M, M^t)$.
\end{proof}

\begin{lemma}\label{lem:relativebasissmall}
Assume $r_{\min} \neq r_{\min}^{\ext}$ and let $t\in (r_{\min}, r_{\min}^{\ext}]$. Then $$\{\sigma_i \mid i\notin \DC \text{ and } r(d_i)< t\}\cup \{\sigma_{j_l} \mid {j_l}\in \DC \text{ and } r(d_{j_{l-1}})<t\} $$ is a basis for $H_{n-1}(M, M^t)$.
\end{lemma}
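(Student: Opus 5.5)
The plan is to mirror the proof of Lemma \ref{lem:relativebasisbig}: first describe $H_{n-1}(M,M^t)$ in terms of the classes $[A_i]$, then show that the proposed family is obtained from a basis by a triangular change of coordinates. The new feature for $t\le r_{\min}^{\ext}$ is that the exterior component $A^{\ext}$ now lies entirely in $M^t$. Consequently the chain $\sum_i [A_i]$, which differs from $[A^{\ext}]$ by the boundary of the $n$-chain $M$, becomes zero in $H_{n-1}(M,M^t)$. Any $A_k$ with $r(d_k)\ge t$ already lies in $M^t$ and so is zero there too. Taking these together, I will establish the following description by the same geometric reasoning used for $t>r_{\min}^{\ext}$ (Lefschetz duality or the LES of the pair):
\begin{itemize}
\item $H_{n-1}(M,M^t)$ is spanned by $\{[A_k]\mid r(d_k)<t\}$;
\item the only relation among these classes is $\sum_{\{k\mid r(d_k)<t\}}[A_k]=0$.
\end{itemize}
Its dimension is therefore $\#\{k\mid r(d_k)<t\}-1$. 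This set is nonempty because $t>r_{\min}$.

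Next I locate $t$ within the death cascade. By Lemma \ref{lem:decreaseradii} the death radii strictly decrease along the cascade. We have $r(d_{j_1})=r_{\min}^{\ext}\ge t$ and $r(d_{j_K})=r_{\min}<t$, so there is a unique $l^*\ge 2$ with
\[
r(d_{j_{l^*}})<t\le r(d_{j_{l^*-1}}).
\]
The cascade indices with $r(d_{j_l})<t$ are then exactly those with $l\ge l^*$, whereas the proposed set contains $\sigma_{j_l}$ only for $l\ge l^*+1$. Every $i\notin\DC$ with $r(d_i)<t$ appears exactly once. Hence the proposed family has cardinality $\#\{k\mid r(d_k)<t\}-1$, which matches the dimension, and it remains only to prove linear independence (equivalently, spanning).

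The key step is to identify $j_{l^*}$ as the right pivot for eliminating the single relation. By Definition \ref{defn:cascade}, $j_{l^*}$ has the largest birth among all essential classes dying after $d_{j_{l^*-1}}$, that is, among those with $r(d_k)<r(d_{j_{l^*-1}})$. Since $t\le r(d_{j_{l^*-1}})$ and $r(d_{j_{l^*}})<t$, it follows that $A_{j_{l^*}}$ has the maximal birth radius among all $A_k$ with $r(d_k)<t$. I therefore use the relation to write $[A_{j_{l^*}}]$ as the sum of the other $[A_k]$ with $r(d_k)<t$. This shows that $\{[A_k]\mid r(d_k)<t,\ k\neq j_{l^*}\}$ is a basis.

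Now I express the proposed elements in this basis. For $i\notin\DC$ we simply have $\sigma_i=[A_i]$. For $l>l^*$, Lemma \ref{lem:decreaseradii} gives $r(b_{j_l})<r(b_{j_{l^*}})$, so
\[
\sigma_{j_l}=\sum_{\{k\mid r(b_k)\le r(b_{j_l}),\ r(d_k)<t\}}[A_k]
\]
in relative homology, and this sum does not involve $[A_{j_{l^*}}]$. Ordering the basis by increasing birth radius, the coefficient matrix is triangular with diagonal entries equal to $1$. This proves the proposed family is a basis.

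I expect the main obstacle to be the first step: justifying rigorously that $H_{n-1}(M,M^t)$ is generated by the $[A_k]$ with the single relation $\sum_{\{k\mid r(d_k)<t\}}[A_k]=0$, and that no other relative classes arise, for example from topology of $M^t$. Here I would lean on the same geometric input that the paper uses for Lemma \ref{lem:relativebasisbig}, adding the observation that $A^{\ext}\subset M^t$ when $t\le r_{\min}^{\ext}$.
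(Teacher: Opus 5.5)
Your route is the paper's route. Like the paper, you describe $H_{n-1}(M,M^t)$ by the classes $[A_k]$ with $r(d_k)<t$ modulo the single relation forced by $A^{\ext}\subset M^t$, drop one cascade index (the paper's $m_q$, your $j_{l^*}$), and check that the remaining $\sigma$'s are unitriangular in birth order. Your observation that $j_{l^*}$ has the largest birth among \emph{all} $k$ with $r(d_k)<t$, not just among cascade indices, is slightly sharper than the paper's choice. It also gives at once $\sigma_{j_l}=\sum_{r(d_k)<t}[A_k]=0$ for $2\le l\le l^*$, which the paper proves separately via Lemma~\ref{lem:orderbirths} and which Proposition~\ref{prop:ess_n-1} needs.

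The gap is exactly the step you flagged, and it cannot be closed as stated. $H_{n-1}(M,M^t)$ need not be spanned by boundary classes, because the connecting map $H_{n-1}(M,M^t)\to H_{n-2}(M^t)$ can be nonzero. Here is a counterexample:
\begin{itemize}
\item Take $n=2$, $c=0$, and let $M$ be a slightly perturbed disc of radius $10$ with one hole $U$ in the upper half-plane, lying between radii about $5$ and $6$. The inner boundary arc of $U$ bulges outward to radius $5+\varepsilon$ at its midpoint, while its ends sit at radius about $5$.
\item Then $r_{\min}\approx 5<r_{\min}^{\ext}\approx 10$, and the cascade is $d_{j_1}=(r_{\min}^{\ext},\Rel)$, $d_{j_2}=d_1$. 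For $t$ just below $5+\varepsilon$, the set in the lemma is empty.
\item However, $M^t$ then has an extra small component under the bulge. The circle of radius $8$ lies in $M^t$ and generates $H_1(M)$, so the exact sequence of the pair gives $H_1(M,M^t)\cong\tilde H_0(M^t)\cong\Z_2$.
\end{itemize}
This extra class is a $\Rel_{n-1}(M,\rho_c)$ class, which Theorem~\ref{dim} itself allows. The paper's proof simply asserts the basis claim, so the defect lies in the statement itself (and likewise in Lemma~\ref{lem:relativebasisbig}), not only in your write-up.

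What is true, and what Proposition~\ref{prop:ess_n-1} actually uses, is the lemma with $H_{n-1}(M,M^t)$ replaced by the image $E_t$ of $H_{n-1}(M)\to H_{n-1}(M,M^t)$. Your first step can be proved for $E_t$ as follows:
\begin{itemize}
\item Let $U_k$ be the bounded component of $\R^n\setminus A_k$, and pick $x_k\in U_k$. By Alexander duality, recording the class of a cycle in $H_{n-1}(\R^n\setminus\{x_k\})\cong\Z_2$ for each $k$ is an isomorphism on $H_{n-1}(M)$ sending $[A_k]$ to the $k$-th basis vector. Also $[A^{\ext}]=\sum_k[A_k]$.
\item The kernel of $H_{n-1}(M)\to H_{n-1}(M,M^t)$ is the image of $H_{n-1}(M^t)$.
\item A cycle $z$ in $M^t$ avoids the connected open ball $B(c,t)$, so its mod-$2$ winding number is constant on $B(c,t)$. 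That ball meets every $U_k$ with $r(d_k)<t$, and the winding number is also constant on each $U_k$.
\item Hence $[z]\in\epsilon\sum_{r(d_k)<t}[A_k]+\mathrm{span}\{[A_k]: r(d_k)\ge t\}$, and $\epsilon=1$ is realised by $A^{\ext}\subset M^t$.
\end{itemize}
This yields precisely your two bullet points for $E_t$, after which your pivot and triangularity argument goes through unchanged.
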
  
\begin{proof}
As $r_{\min}< t \leq r_{\min}^{\ext}$ the relative boundaries (up to boundaries in $M$) are generated by $\{[A_i] \mid \min\{\rho_c(A_i)\}\geq t\}$ alongside $\sum_k [A_k]$ (the sum over all interior connection components of $\partial M$). 

Let $P=m_1, m_2, \ldots m_p$ be the indices of the interior connected components of $\partial M$ such that $\min \{\rho_c(A_{m_j})\}<t$. Order them by birth in $\mathrm{XRPH}_{n-1}(\partial M)$, that is $\max\{\rho_c(A_{m_1})\}<\max\{\rho_c(A_{m_2})\}\ldots <\max\{\rho_c(A_{m_p})\}$.

Note that since $r_{\min}<t$ there must be at least one $m_j$ in the death cascade index set (in particular, $i$ such that $\min\{\rho_c(A_i)\}=r_{\min}$ has $i\in P$ and $i\in \DC$). 

Let $m_q$ be the largest $m_j$ such that $m_q\in \DC$. By Lemma \ref{lem:decreaseradii} we have $d_{m_q}$ is also the earliest in the death cascade sequence restricted to those with $r(d_{j_l})< t$.

The set $\{[A_{m_1}], [A_{m_2}], \ldots , [A_{m_{q-1}}], [A_{m_{q+1}}], \ldots [A_{m_p}]\}$ is a basis for $H_{n-1}(M, M^t)$ (any choice of $p-1$ of them would form a basis). We can write each of the $\sigma_i$, considered as  a relative homology class in $H_{n-1}(M, M^t)$, uniquely as a linear combination of these $$\{[A_{m_1}], [A_{m_2}], \ldots , [A_{m_{q-1}}], [A_{m_{q+1}}], \ldots [A_{m_p}]\}.$$

Consider $i\notin DC$. If $\min \{\rho_c(A_i)\}\geq t$ then $\sigma_i$ is a relative boundary and $0$ when written as a linear combination of $\{[A_{m_1}], [A_{m_2}], \ldots , [A_{m_{q-1}}], [A_{m_{q+1}}], \ldots [A_{m_p}]\}$. If $\min \{\rho_c(A_i)\}< t$ then $i=m_j$ for some $j\neq q$ and $\sigma_i$ can be written as $[A_i]$.

Consider $j_l \in \DC$ and suppose $\min \{\rho_c(A_{j_{l-1}})\} \geq t$. 
By construction we have $\sum_k [A_k]-\sigma_{j_l}=\sum_{\{k\mid r(b_k)>r(b_{j_l})\}}[A_k]$. 
By Lemma \ref{lem:orderbirths} if $r(b_i)>r(b_{j_l})$ then $r(d_i)>r(d_{j_{l-1}})$. 
That is $\min\{\rho_c(A_i)\}\geq \min \{\rho_c(A_{j_{l-1}})\} \geq t$ and $[A_i]$ is a relative boundary. This implies that $\sum_k [A_k]-\sigma_{j_l}$, and hence also $\sigma_{j_l}$, is a relative boundary in $H_{n-1}(M, M^t)$.

Finally consider $j_l \in \DC$ with $\min \{\rho_c(A_{j_{l-1}})\} < t$. As $\min \{\rho_c(A_{j_{l}})\} < \min \{\rho_c(A_{j_{l-1}})\}$ by construction we have both $j_l$ and $j_{l-1}$ in $P$. As $j_{l-1}$ appears later in the order of the indices in $P$ we have $j_l\neq m_q$.  
We have $\sigma_i=\sum_{\{k\mid r(b_k)<r(b_{j_l})\}}[A_k]$ is equivalent in $H_{n-1}(M, M^t)$ to $\sum_{\{k \mid r(b_k)<r(b_{j_l}) \text{ and } k\in P\}}[A_k]$.

\end{proof}

To complete the picture we can also observe that $H_{n-1}(M, M^t)=0$ for $t\leq r_{\min}$.

\begin{cor}
For $i\notin \DC$ we have $\sigma_i$ is not a relative boundary in $H_n(M, M^t)$ for all $t>r(d_i)$. For $j_l\in \DC$, $l>1$, we have  $\sigma_{j_l}$ is not a relative boundary in $H_n(M, M^t)$ for all $t>r(d_{j_{l-1}})$.
\end{cor}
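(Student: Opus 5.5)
The plan is to read the corollary off directly from the basis descriptions in Lemmas~\ref{lem:relativebasisbig} and~\ref{lem:relativebasissmall}, together with the observation that $H_{n-1}(M,M^t)=0$ for $t\le r_{\min}$. (The statement writes $H_n(M,M^t)$, but the classes $\sigma_i$ are $(n-1)$-cycles, so the intended group is $H_{n-1}(M,M^t)$; I work there.) A basis element is in particular nonzero, so it suffices to show that, in the relevant range of $t$, the class $\sigma_i$ belongs to the basis exhibited for $H_{n-1}(M,M^t)$.

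\emph{Case $i\notin\DC$.} I split according to whether $t>r_{\min}^{\ext}$ or $t\le r_{\min}^{\ext}$.
\begin{itemize}
\item If $t>r_{\min}^{\ext}$, Lemma~\ref{lem:relativebasisbig} lists $\sigma_i$ among the basis elements as soon as $r(d_i)<t$, which is exactly the hypothesis $t>r(d_i)$.
\item If $r_{\min}<t\le r_{\min}^{\ext}$ (which forces $r_{\min}\neq r_{\min}^{\ext}$), Lemma~\ref{lem:relativebasissmall} again includes $\sigma_i$ whenever $r(d_i)<t$.
\end{itemize}
The range $t\le r_{\min}$ cannot occur, since $t>r(d_i)\ge r_{\min}$.

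\emph{Case $j_l\in\DC$ with $l>1$.} By Lemma~\ref{lem:decreaseradii}, $r(d_{j_l})<r(d_{j_{l-1}})$. Hence $t>r(d_{j_{l-1}})$ also gives $t>r(d_{j_l})$, and $t>r_{\min}$. The split is again by the position of $t$ relative to $r_{\min}^{\ext}$.
\begin{itemize}
\item If $t>r_{\min}^{\ext}$, then $\sigma_{j_l}$ is a basis element by Lemma~\ref{lem:relativebasisbig}, because $r(d_{j_l})<t$.
\item If $t\le r_{\min}^{\ext}$, Lemma~\ref{lem:relativebasissmall} includes $\sigma_{j_l}$ precisely when $r(d_{j_{l-1}})<t$, which is our hypothesis.
\end{itemize}
Since $d_{j_1}=(r_{\min}^{\ext},\Rel)$, the condition $t>r(d_{j_{l-1}})$ with $l=2$ already places $t$ above $r_{\min}^{\ext}$. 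So the second sub-case only involves $l\ge 3$, and the bookkeeping is consistent.

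The main obstacle is that the proof of Lemma~\ref{lem:relativebasissmall} as written ends somewhat abruptly. The final step, identifying $\sigma_{j_l}$ for $r(d_{j_{l-1}})<t$ with a combination of the chosen $[A_{m_j}]$ ($j\ne q$) whose coefficient matrix is triangular with nonzero diagonal, is only sketched. If needed, I would complete it as follows.
\begin{enumerate}
\item Order the surviving classes by birth.
\item Note that $\sigma_{j_l}$ contains $[A_{j_l}]$ with coefficient one, and otherwise only components born earlier.
\item Reduce modulo the relation $\sum_k[A_k]\sim 0$ and the relative boundaries.
\item Since $j_l\ne m_q$, the leading term $[A_{j_l}]$ survives. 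The same triangularity argument as in Lemma~\ref{lem:relativebasisbig} then shows that the set is a basis, and hence that each listed $\sigma$ is nonzero, i.e.\ not a relative boundary.
\end{enumerate}
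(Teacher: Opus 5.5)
Your proposal is correct and is essentially the paper's own argument: the corollary is given without a separate proof because it is read off from the bases in Lemmas~\ref{lem:relativebasisbig} and~\ref{lem:relativebasissmall}, together with $H_{n-1}(M,M^t)=0$ for $t\le r_{\min}$. Your split of $t$ at $r_{\min}^{\ext}$, including the observation that $l=2$ forces $t>r_{\min}^{\ext}$, is exactly the bookkeeping needed; your reading of $H_n$ as a typo for $H_{n-1}$ is right, and your triangularity completion of Lemma~\ref{lem:relativebasissmall} is sound, since $\sigma_{j_l}$ involves only components born no later than $A_{j_l}$ and so never the omitted $[A_{m_q}]$.
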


We can combine the above results to read off the $Ess_{n-1}(M,\rho_c)$.

\begin{prop}\label{prop:ess_n-1}
Let $d_{j_1}, d_{j_2}, \ldots d_{j_K}$ be the death cascade sequence.
$$Ess_{n-1}(M,\rho_c)=\bigoplus_{i\notin \DC} \mathcal{I}_{[b_i, d_i)} \oplus \bigoplus_{l=2}^{K}\mathcal{I}_{[b_{j_l}, d_{j_{l-1}})}.$$
\end{prop}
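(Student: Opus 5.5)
The plan is to compute $\mathrm{Ess}_{n-1}(M,\rho_c)$ directly from the compatible bases built in Lemmas \ref{lem:basis}, \ref{lem:relativebasisbig} and \ref{lem:relativebasissmall}. These bases are adapted to the persistence module restricted to the essential part. Every degree-$(n-1)$ class of $M$ comes from the interior boundary components, so $\mathrm{Ord}_{n-1}(M,\rho_c)=0$ and every class born in $\mathcal{O}$ is essential. It therefore suffices to find a single family $\{\sigma_i\}$ with two properties: each $\sigma_i$ is born exactly at $(r(b_i),\Ord)$, and it persists through the relative part until an explicitly identified parameter, after which it becomes zero. By Lemma \ref{lem:basis}, $\{\sigma_i \mid r(b_i)\le t\}$ is a basis of $H_{n-1}(M_t)$. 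This gives the ordinary half of the barcode at once: the births are exactly the $(r(b_i),\Ord)$, and no class dies in $\mathcal{O}$, since the structure maps $H_{n-1}(M_s)\to H_{n-1}(M_t)$ carry basis elements to basis elements.

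Next I would track the images of the $\sigma_i$ under $H_{n-1}(M)\to H_{n-1}(M,M^t)$ as $t$ decreases from $r_{\max}$. For $t>r_{\min}^{\ext}$, Lemma \ref{lem:relativebasisbig} says that the surviving $\sigma_i$ (those with $r(d_i)<t$) form a basis. Moreover, $\sigma_i=[A_i]$ with $i\notin\DC$ becomes a relative boundary precisely when $t$ drops to $r(d_i)$, because $A_i\subset M^t$. For $t\in(r_{\min},r_{\min}^{\ext}]$, Lemma \ref{lem:relativebasissmall} identifies the surviving basis. It consists of $\sigma_i$ for $i\notin\DC$ with $r(d_i)<t$, together with $\sigma_{j_l}$ for $j_l\in\DC$ with $r(d_{j_{l-1}})<t$; the preceding corollary records the same non-vanishing facts. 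I also need the converse in each range: the elements not on the list map to zero. For $i\notin\DC$ with $r(d_i)\ge t$ this holds because $A_i\subset M^t$. For $\sigma_{j_l}$ with $r(d_{j_{l-1}})\ge t$ it is the argument already given inside the proof of Lemma \ref{lem:relativebasissmall}, which uses Lemma \ref{lem:orderbirths} to show that $\sum_k[A_k]-\sigma_{j_l}$ is a relative boundary, together with the fact that $\sum_k[A_k]$ is a relative boundary once $t\le r_{\min}^{\ext}$. For $\sigma_{j_1}$, the class dies at $r_{\min}^{\ext}$, as shown in the discussion before Definition \ref{defn:cascade}. Finally, for $t\le r_{\min}$ we have $H_{n-1}(M,M^t)=0$, which kills $\sigma_{j_K}$ at $d_{j_{K-1}}$ and leaves nothing else alive.

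The conclusion is a standard fact about persistence modules. Suppose a family of vectors restricts, at every parameter, to a basis of the module at that parameter (after discarding the zero images), and suppose each vector is nonzero exactly on an interval. Then the module decomposes as the direct sum of the corresponding interval modules. I would verify the hypothesis using the maps $H_{n-1}(M)\to H_{n-1}(M,M^t)\to H_{n-1}(M,M^{t'})$, where each $\sigma$ is sent to the image of the same chain. This yields intervals $[b_i,d_i)$ for $i\notin\DC$, intervals $[b_{j_l},d_{j_{l-1}})$ for $l\ge 2$, and $[b_{j_1},(r_{\min}^{\ext},\Rel))=[b_{j_1},d_{j_1})$ with $j_1=\DC$'s first element. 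The last one, however, is really the exterior term. Here I expect the main obstacle: reconciling the indexing at the top of the cascade. By the proof of Lemma \ref{lem:decreaseradii}, $b_{j_1}$ should be read as $r_{\max}$, coming from $A^{\ext}$. The class born at $r_{\max}$ in $M$ is then $\sigma_{j_1}$ or $[A^{\ext}]$, and it dies at $d_{j_1}$ only if $j_1$ denotes the exterior label; otherwise a pairing is off by one. I would fix conventions so that $d_{j_1}=(r_{\min}^{\ext},\Rel)$ is paired with $b_{j_2}$, which matches the term $\mathcal{I}_{[b_{j_2},d_{j_1})}$. I would check carefully that the birth $b_{j_1}=(r_{\max},\Ord)$ goes to $L$, consistent with Lemma \ref{lem:Ess_n-1(L)}, so that the count of births agrees with Lemma \ref{lem:births0}.
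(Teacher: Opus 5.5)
Your route is the paper's. Its proof of Proposition~\ref{prop:ess_n-1} simply asserts that the $\sigma_k$ of Lemmas~\ref{lem:basis}, \ref{lem:relativebasisbig} and~\ref{lem:relativebasissmall} form a barcode basis. Your version is a more careful rendering of that same argument: you read the relative lemmas as statements about the image of $H_{n-1}(M)\to H_{n-1}(M,M^t)$, you check that the unlisted $\sigma$'s vanish, and you invoke the standard barcode-basis fact.

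The one real error is in your last paragraph. The argument does not produce an interval $[b_{j_1},d_{j_1})$. Nor is there a degree-$(n-1)$ class of $M$ born at $(r_{\max},\Ord)$ that some convention could attach it to:
\begin{itemize}
\item Over $\Z_2$ we have $[A^{\ext}]=\sum_k[A_k]$ in $H_{n-1}(M)$, because $\partial M$ bounds $M$.
\item $\sum_k[A_k]$ already exists in $H_{n-1}(M_t)$ once $t\ge\max_k r(b_k)$, and this maximum is $<r_{\max}$.
\item Consistently, the point realising $r_{\max}$ is a $(-)$-critical point of $\rho_c^M$, so Theorem~\ref{breakpoint} allows no ordinary birth there.
\end{itemize}
So if $\sigma_{j_1}$ were in your family, it would lie in the span of the other $\sigma$'s at $(t,\Ord)$ for $t\ge r_{\max}$, and Lemma~\ref{lem:basis} would fail. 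It must be excluded, not reconciled.

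The confusion comes from the paper's notation, which uses $j_1$ in two different senses:
\begin{itemize}
\item Before Definition~\ref{defn:cascade}, $j_1$ names the interior component whose class dies at $(r_{\min}^{\ext},\Rel)$.
\item In Definition~\ref{def:cascadeSeq} and Lemma~\ref{lem:decreaseradii}, $j_1$ is the exterior label, with $b_{j_1}=(r_{\max},\Ord)$ and $d_{j_1}=(r_{\min}^{\ext},\Rel)$.
\end{itemize}
Fix the second reading, and index the family by interior components only: $\sigma_i$ for $i\notin\DC$ and $\sigma_{j_l}$ for $2\le l\le K$.

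Then $\sigma_{j_2}$ is alive for $t>r_{\min}^{\ext}$ by Lemma~\ref{lem:relativebasisbig}. It is absent from the list in Lemma~\ref{lem:relativebasissmall} (case $l=2$) for $t\in(r_{\min},r_{\min}^{\ext}]$. So $d_{j_1}$ appears exactly as the death of $\sigma_{j_2}$, and the stated intervals come out with nothing left over. The birth $(r_{\max},\Ord)$ of $\partial M$ belongs to $L$ by Lemma~\ref{lem:Ess_n-1(L)}, as your final sentence says.

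A smaller slip of the same kind concerns $\sigma_{j_K}$. It dies at $d_{j_{K-1}}$ because it drops off the list of Lemma~\ref{lem:relativebasissmall} once $t\le r(d_{j_{K-1}})$. It does not die there because $H_{n-1}(M,M^t)=0$ for $t\le r_{\min}$.
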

\begin{proof}
We have explicitly found a basis $\{\sigma_k\}$ for this interval decomposition. This set of elements, when non-zero for a choice of $t$, provide a basis $H_{n-1}(M_t)$ and $H_{n-1}(M, M^t)$, as shown by combining Lemmas \ref{lem:basis}, \ref{lem:relativebasisbig} and \ref{lem:relativebasissmall}.
\end{proof}

\begin{theorem}\label{dim}
Let $$\text{XRPH}_{n-1}(\partial M, c) = \bigoplus_{[b_i,d_i)\in S_{\partial M}} \mathcal{I}_{[b_i,d_i)}.$$ 
Let $d_{j_1}, d_{j_2}, \ldots d_{j_K}$ be the death cascade sequence of $\partial M$.
Let $J_{n-1}^M\subseteq S_{n-1}^{\partial M}$ be the subset of intervals $[b_i,d_i)$ such that $b_i = (\rho_c(p),\text{Rel})$ with $p\in\text{Crit}(\rho_c^M,(0,-1))$.

Then 
$$\text{XRPH}_{n-1}(M, c)=\bigoplus_{i\notin \DC} \mathcal{I}_{[b_i, d_i)} \oplus \bigoplus_{l=2}^{K}\mathcal{I}_{[b_{j_l}, d_{j_{l-1}})}\oplus \bigoplus_{[b_i,d_i)\in J_{n-1}^M}\mathcal{I}_{[b_i, d_i)}.$$
\end{theorem}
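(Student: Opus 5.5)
We split $\text{XRPH}_{n-1}(M,c)$ into its three summands,
$$\text{XRPH}_{n-1}(M,c)=\text{Ord}_{n-1}(M,\rho_c)\oplus\text{Ess}_{n-1}(M,\rho_c)\oplus\text{Rel}_{n-1}(M,\rho_c),$$
and identify each one separately. Proposition \ref{prop:ess_n-1} already gives the essential part: it is $\bigoplus_{i\notin \DC}\mathcal{I}_{[b_i,d_i)}\oplus\bigoplus_{l=2}^K\mathcal{I}_{[b_{j_l},d_{j_{l-1}})}$. Here $i$ runs over the interior boundary components of $\partial M$, and the exterior component has been absorbed into the cascade shift. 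What remains is to show two things. First, $\text{Ord}_{n-1}(M,\rho_c)=0$. Second, $\text{Rel}_{n-1}(M,\rho_c)$ consists exactly of the intervals of $S_{n-1}^{\partial M}$ whose birth is $(\rho_c(p),\text{Rel})$ with $p\in\text{Crit}(\rho_c^M,(0,-1))$, i.e.\ the set $J_{n-1}^M$.

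\textbf{Ordinary part.} By Lemma \ref{lem:basis}, the classes $\sigma_i$ with $r(b_i)\le t$ form a basis of $H_{n-1}(M_t)$. Each $\sigma_i$ survives into the relative range, since it becomes an essential class in Proposition \ref{prop:ess_n-1}. Hence every ordinary birth in degree $n-1$ begins an essential interval, and $\text{Ord}_{n-1}(M,\rho_c)=0$. The same holds for $\partial M$: since $\partial M$ is an $(n-1)$-manifold, a top-degree class of a sublevel set is a union of closed components and never dies in the ordinary range. So no ordinary intervals appear on either side, which is consistent with the statement.

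\textbf{Relative part.} Here we reuse the surjective morphism $\phi_{n-1}$ from the proof of Theorem \ref{r_min=r_min^ext}. Restricted to the relative parameter range, it carries intervals of $\text{Rel}_{n-1}(\partial M,\rho_c)$ onto intervals of $\text{Rel}_{n-1}(M,\rho_c)\oplus\text{Rel}_{n-1}(L,\rho_c)$. By the induced matching theorem, this correspondence preserves births and has $\hat\phi_{n-1}(d)\le d$. The deaths of relative intervals are distinct from the essential deaths that take part in the cascade. We therefore aim to show that $\hat\phi_{n-1}$ maps the set of relative-interval deaths of $\partial M$ bijectively onto itself; Lemma \ref{lem:identity} then forces it to be the identity there. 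The bookkeeping comes from Lemma \ref{lem:births0}. Every death in $\mathfrak{d}_{n-1}(\partial M,\rho_c)$ other than those handled in the cascade has the same value on both sides. The values $(r_{\min},\text{Rel})$ and $(R^-,\text{Rel})$ are both accounted for by essential intervals: Lemma \ref{lem:Ess_n-1(L)} for $L$, and Proposition \ref{prop:ess_n-1} for $M$. So the relative deaths form a $\hat\phi_{n-1}$-invariant finite set, and the identity argument applies. Finally, to decide which relative intervals lie in $S^M_{n-1}$ rather than $S^L_{n-1}$, we use Theorem \ref{breakpoint} together with Lemma \ref{lem:swaps}, exactly as in the proof of Theorem \ref{thm:k}. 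A relative birth of degree $n-1$ in $M$ occurs at a point of $\text{Crit}(\rho_c^M,1)\cup\text{Crit}(\rho_c^M,(0,-1))$. Interior critical points do not create intervals of $\partial M$, so only boundary points of sign $-1$ remain, while those of sign $+1$ belong to $L$.

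\textbf{Main obstacle.} The delicate step is the invariance claim for the relative deaths. We must rule out that $\hat\phi_{n-1}$ sends a relative death of $\partial M$ onto one of the deaths $d_{j_{l-1}}$ now used by an essential interval of $M$, or sends a cascade death onto a relative one. To handle this, we first remove from $\text{XRPH}_{n-1}(\partial M)$, $\text{XRPH}_{n-1}(M)$ and $\text{XRPH}_{n-1}(L)$ all the essential intervals, whose matching is already fixed by Proposition \ref{prop:ess_n-1} and Lemma \ref{lem:Ess_n-1(L)}. We then check that the essential parts of the relevant Mayer–Vietoris terms split off compatibly, so that $\phi_{n-1}$ restricts to a surjection between the relative submodules. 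If this compatibility fails, the fallback is to argue directly at each relative critical value, using the LES (\ref{LESrel}) together with the distinctness of critical values.
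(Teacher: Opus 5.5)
Your route is the paper's. You take $\Ess_{n-1}(M,\rho_c)$ from Proposition \ref{prop:ess_n-1}, show $\Ord_{n-1}(M,\rho_c)=0$, then use the surjection $\phi_{n-1}$ and its induced matching, apply Lemma \ref{lem:identity} to the relative deaths, and finish with the sign check via Theorem \ref{breakpoint} and Lemma \ref{lem:swaps}. The only difference is in the ordinary part. You use the $t$-independent basis of Lemma \ref{lem:basis}, so the ordinary structure maps are injective. The paper instead reads $\Ord_{n-1}(M,\rho_c)=0$ off the death identity of Lemma \ref{lem:births0}, since $\partial M$ has no ordinary deaths in top degree. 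Both work.

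The gap is the step you call the main obstacle, which you leave open with a vague splitting plan and an unspecified fallback.

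\emph{Your stated reason is not valid.} You say that $\phi_{n-1}$ ``restricted to the relative parameter range'' carries $\Rel_{n-1}(\partial M,\rho_c)$ onto $\Rel_{n-1}(M,\rho_c)\oplus\Rel_{n-1}(L,\rho_c)$. But an essential interval $[(s,\Ord),(t,\Rel))$ is also supported on part of $\mathcal{R}$, so restricting to $\mathcal{R}$ does not separate relative from essential summands.

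\emph{The missing observation is one you state only in passing.} For an epimorphism, the induced matching sends $[b,d)$ to $[b,\hat\phi_{n-1}(d))$, i.e.\ it preserves births. By Lemma \ref{lem:births0} and the distinctness of critical values, it is therefore a bijection of barcodes determined by births. An interval is relative exactly when its birth lies in $\mathcal{R}$. So relative intervals of $\partial M$ are matched only with relative intervals of $M\oplus L$, and essential ones only with essential ones. In particular no relative death can go to a cascade death $d_{j_{l-1}}$ or to $(R^-,\Rel)$, nor conversely.

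\emph{The bookkeeping then closes.} Proposition \ref{prop:ess_n-1} and Lemma \ref{lem:Ess_n-1(L)} show that the essential deaths of $M\sqcup L$ are those of $\partial M$ with $(r_{\min},\Rel)$ exchanged for $(R^-,\Rel)$. Subtracting these from the death identity of Lemma \ref{lem:births0} gives $\death(\Rel_{n-1}(\partial M,\rho_c))=\death(\Rel_{n-1}(M,\rho_c))\sqcup\death(\Rel_{n-1}(L,\rho_c))$. Lemma \ref{lem:identity} then applies.

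With this observation your argument is complete and coincides with the paper's. Your splitting idea can also be made rigorous: the classes born in $\mathcal{O}$ span a natural submodule preserved by $\phi_{n-1}$, whose quotient is the relative part. But it is unnecessary.
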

\begin{proof}
Lemma \ref{lem:births0} says
$\mathfrak{d}_{n-1}(M,\rho_c)\sqcup  \mathfrak{d}_{n-1}(L,\rho_c)= \big(\mathfrak{d}_{n-1}(\partial M,\rho_c)\sqcup \{(R^-,\text{Rel})\} \big)\backslash \{(r_{\min},\text{Rel})\}$. $\partial M$ is an $(n-1)$ dimensional manifold, if has no deaths in the ordinary parameter range, so neither does $M$. Hence $\Ord_{n-1}(M, \rho_c)=0$. 

Recall that $$\text{XRPH}_{n-1}(M, c)=\Ess_{n-1}(M, \rho_c)\oplus \Ord_{n-1}(M, \rho_c) \oplus \Rel_{n-1}(M, \rho_c).$$ From Proposition \ref{prop:ess_n-1} we have $Ess_{n-1}(M,\rho_c)=\bigoplus_{i\notin \DC} \mathcal{I}_{[b_i, d_i)} \oplus \bigoplus_{l=2}^{K}\mathcal{I}_{[b_{j_l}, d_{j_{l-1}})}.$ Combined with $\Ord_{n-1}(M, \rho_c)=0$,
it is thus sufficient to show that $\text{Rel}_{n-1}(M, c)=\bigoplus_{[b_i,d_i)\in J_{n-1}^M}\mathcal{I}_{[b_i, d_i)}.$

The first part of the proof is the same as that of Theorem \ref{r_min=r_min^ext}. We have a bijection $$\hat{\phi}_{n-1}: \death_{n-1}(\partial M, \rho_c) \to \death_{n-1}(M, \rho_c)\cup \death_{n-1}(L, \rho_c)$$ 
such that if $[b,d)\in S_{n-1}^{\partial M}$, then we have
$[b,\hat{\phi}_{n-1}(d))\in S_{n-1}^{M}\cup S_{n-1}^{L}$ with $\hat{\phi}_{n-1}(d)\leq d$.

As a death corresponds to an essential class if and only if it is of the form $(\min\{\rho_c(A_i)\},\Rel)$ for some connected component $A_i$, we have $\hat{\phi}_{n-1}$ restricts to a bijection $\hat{\phi}_{n-1}|_{\death_{n-1}(\Rel_{n-1}(\partial M, \rho_c))}: \death_{n-1}(\Rel_{n-1}(\partial M, \rho_c)) \to \death_{n-1}(\Rel_{n-1}(M, \rho_c))\cup \death_{n-1}(\Rel_{n-1}(L, \rho_c))$.
Since $$\death_{n-1}(\Rel_{n-1}(\partial M, \rho_c))=\death_{n-1}(\Rel_{n-1}(M, \rho_c))\cup \death_{n-1}(\Rel_{n-1}(L, \rho_c))$$ we can apply Lemma \ref{lem:identity} to conclude $\hat{\phi}_{n-1}|_{\death_{n-1}(\Rel_{n-1}(\partial M, \rho_c))}$ is the identity. Furthermore, this implies $\Rel_{n-1}(\partial M,\rho_c)=\Rel_{n-1}(M, \rho_c)\oplus \Rel_{n-1}(L,\rho_c)$. 

We can then use the method of checking signs in the proof of Theorem \ref{thm:k} to identify the interval modules in $\Rel_{n-1}(\partial M,\rho_c)$ that are also in $\Rel_{n-1}(M, \rho_c)$. 
\end{proof}

\section{Application to binary images}
\label{application}

\subsection{Representation of binary digital images}
\label{consider}

Binary images are $m\times n$ matrices or 2-dimensional arrays, with entries called \emph{pixels} taking exactly one of two values, 0 (black) or 1 (white). We call the structure we are interested in computing persistent homology for the \emph{foreground}.  This will consist of either all the black pixels or all the white pixels.  

To implement radial filtration in binary images, we modified the algorithm described in \cite{KT2022}. The original algorithm\footnote{The corresponding R package is available at \url{https://github.com/james-e-morgan/xpht}.} 
is implemented in R and takes a binary image as input and outputs the extended persistent homology transform of the foreground with respect to an even number of directions. 
We adjust it to compute radial extended persistent homology for a single radial filtration, i.e. by fixing a center point. Figure \ref{JM} displays the workflow of the algorithm. In what follows, we will discuss some necessary theoretical justifications and highlight some practical considerations. Justifications for other parts of the algorithm can be found in Section 6 of \cite{KT2022}. 

\begin{figure}[h]
    \centering
    \begin{tikzpicture}[
    node distance = 2mm and 4mm,
      start chain = going right,
 disc/.style = {shape=cylinder, draw, shape aspect=0.3,
                shape border rotate=90,
                text width=15mm, align=center, font=\linespread{0.8}\selectfont},
  mdl/.style = {shape=ellipse, aspect=2.1, draw},
  alg/.style = {draw, align=center, font=\linespread{0.7}\selectfont}
                    ]
    \begin{scope}[every node/.append style={on chain, join=by -Stealth}]
\node (n1) [disc] {Binary \\ image $M$};
\node (n2) [alg]  {Extract\\ foreground\\ boundary \\ curves};
\node (n3) [alg]  {Compute XRPH of each \\ boundary curve for a \\given centre point and \\ find critical point signs};
\node (n4) [alg] {Decide the XRPH \\classes for $M$};
\node (n3) [disc]  {Output\\XRPH \\ of $M$};
    \end{scope}
    \end{tikzpicture}
    \caption{Workflow diagram to compute XRPH with respect to a given radial filtration on the foreground of an input binary image.}
    \label{JM}
\end{figure}

As the results of Section~\ref{sec:proof} establish, to find $\mathrm{XRPH}(M)$ it suffices to compute $\mathrm{XRPH}(\partial M)$. There are many methods to add structure to pixel arrays to make them suitable for topological analysis. 
A standard approach treats a pixel array as a cubical complex, with each pixel treated as either a $0$-dimensional element (vertex) in this cell complex or alternatively, as a top-dimensional element or $n$-dimensional cube~\cite{DualDigitalImage}.   

In the current application we wish to extract the boundary of a set of pixels and need each boundary component to be piecewise-linear 1-manifold. As discussed in~\cite{KT2022}, we do this by first treating each pixel as a unit square patch centered at integer coordinates $(i,j)$, and adopt the convention that foreground pixels connect through corner adjacencies. We then construct vertex points for the boundary curves by placing a vertex in the middle of an edge whenever the two pixels on either side take distinct values. These boundary vertices are connected with oriented line segments that have the foreground on the left, see Figure~\ref{midpts}.

\begin{figure}[ht]
    \centering
    \includegraphics[width=0.5\textwidth]{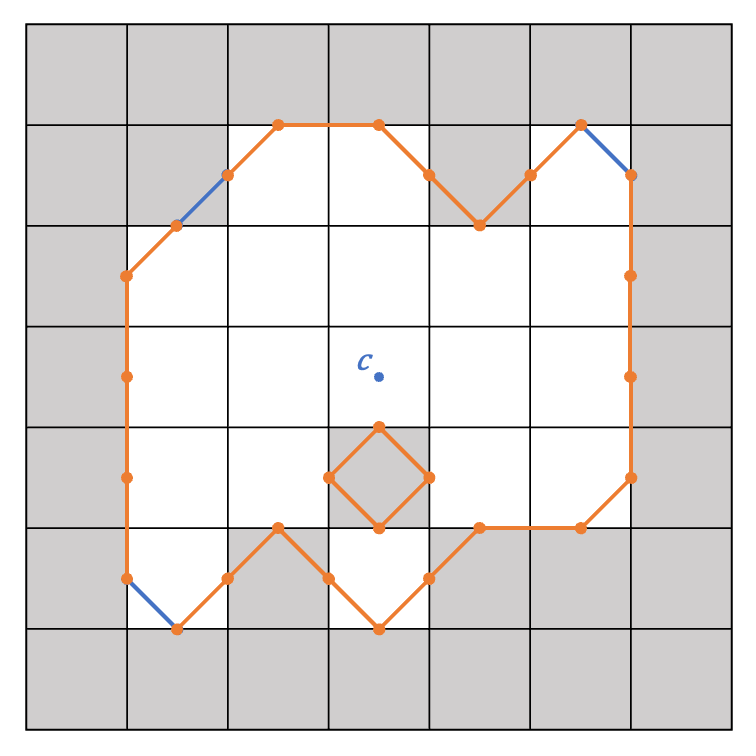}
    \caption{An example that illustrates how we construct the boundary curves from a binary image. Pixels are square cells such that the foreground is white (`1') and the background is gray (`0'). 
    The foreground pixels connect through corner adjacencies as seen at the bottom of the image. 
    Boundary line segments are determined by the pixel values in each $2\times 2$ patch. There are $2^4$ possible configurations for these patches and at least one of each type is illustrated in the above example. 
    If we choose $c$ to be at the center of a pixel, as shown above, then the blue edges have both their vertex values at the same distance from $c$. For this reason, we always perturb the coordinates of $c$ so that they do not take integer or half-integer values, and also exclude the cases where $\text{frac}(c_x) = \pm\text{frac}(c_y)$. 
    }
    \label{midpts}
\end{figure}

\subsection{Sign detection for critical points}
\label{signs}
Now suppose for a given binary image input $M$, we have traced its boundary $\partial M$ by recording the sequence of boundary vertices. Fix a point $c$, and consider the radial function $\rho_c$. Since the intersection of a circle with a straight line is at most two points, unlike in \cite{KT2022}, we do not need to consider the case where more than three boundary vertices on the same straight line take the same value (the ``co-linear" case in \cite{KT2022}). 
However, it is possible for two adjacent boundary vertices to have the same distance from $c$, and for this reason, we perturb the coordinates of $c$ so that they do not take integer or half-integer values, and also exclude the case where $\text{frac}(c_x) = \pm\text{frac}(c_y)$. 
We then make the following definition:

\begin{defn}\label{localmin}
Let $\gamma$ be a piecewise-linear curve in the boundary $\partial M$ of $M$ with $m$ vertices ordered cyclically $x_0, x_1,\cdots,x_m=x_0.$ Then a vertex $x_i$ is a \emph{0-critical point} if $\rho_c(x_{i-1})>\rho_c(x_i)$ and $\rho_c(x_i)<\rho_c(x_{i+1})$, where the subscripts are taken modulo $m$. We say $x_i$ is a \emph{local minimum} or alternatively, a \emph{$(+)$-critical point} if there exists some $\epsilon>0$ such that for all $a\in B(x_i,\epsilon)\cap M$, we have $\rho_c(x_i)\leq \rho_c(a).$
\end{defn}

The main change to the original algorithm to now work with radial filtrations occurs in the process of labelling $0$-critical points. 
Recall that in the first step, we find the set of boundary curves and then the algorithm computes
the radial distance from $c$ for the vertices on the boundary curves. 
These values determine when a vertex is a homological critical point for the boundary curve. 
It is the sign of the critical point that allows us to decide if it corresponds to a birth or death event for $M$. 
For example, when considering $0$-critical points for $\partial M$, the $(+)$-critical points are related to Ordinary classes, while the others, $(-)$-critcal points, are related to Relative classes. This was justified in Section \ref{sec:proof}.

To differentiate between the two types of critical point in practice, we use the following fact about orientation of triangles:

\begin{lemma}\label{triangle}
Let $\Delta XYZ$ be a triangle with positive area. Denote by $\det(x,y)$, 
the determinant of a $2\times 2$ matrix whose columns are $x$ and $y$. 
Then the vertices of $\Delta XYZ$ are in an anticlockwise order if $\det(Z-Y,X-Y)>0$, and they are in a clockwise order if $\det(Z-Y,X-Y)<0$.
\end{lemma}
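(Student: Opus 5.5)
The plan is to reduce the statement to the sign of a single scalar, namely the $2\times 2$ determinant, and compare it with the signed area given by the shoelace formula. Translate so that $Y$ is the origin; this changes neither the orientation nor the determinant, since both depend only on differences of vertices. Put $u=X-Y$ and $v=Z-Y$. The signed area of the triangle traversed in the order $X\to Y\to Z$ is
\[
A_{XYZ}=\tfrac12\big(\det(X,Y)+\det(Y,Z)+\det(Z,X)\big).
\]
By definition, the vertices are in anticlockwise order exactly when $A_{XYZ}>0$ and in clockwise order exactly when $A_{XYZ}<0$. Positive area guarantees $A_{XYZ}\neq 0$, so exactly one of the two cases holds.

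Next, use bilinearity and antisymmetry of $\det$ to expand $\det(Z-Y,X-Y)$:
\[
\det(Z,X)-\det(Z,Y)-\det(Y,X)+\det(Y,Y)=\det(Z,X)+\det(Y,Z)+\det(X,Y).
\]
This equals $2A_{XYZ}$. Therefore $\det(Z-Y,X-Y)>0$ if and only if the order is anticlockwise, and $\det(Z-Y,X-Y)<0$ if and only if it is clockwise, which proves both claims at once.

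The only step needing care is the convention linking orientation to the sign of the shoelace area. If one prefers to define anticlockwise by rotation, the check is as follows. Rotations and translations preserve $\det$, so we may place $Y$ at the origin and $u$ along the positive $x$-axis, with $u=(a,0)$ and $a>0$. Then $\det(v,u)=-a\,v_2$. The traversal $X\to Y\to Z\to X$ is anticlockwise exactly when $Z$ lies below the $x$-axis, i.e. $v_2<0$. This happens exactly when $\det(v,u)>0$, which is the claim.
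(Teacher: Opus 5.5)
Your proof is correct. The paper gives no proof of this lemma; it is invoked as a standard fact about orientation, so there is no argument to compare yours against, and your proposal supplies the missing justification.

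The bilinear expansion $\det(Z-Y,X-Y)=\det(X,Y)+\det(Y,Z)+\det(Z,X)=2A_{XYZ}$ is right: the term $\det(Y,Y)$ vanishes and antisymmetry flips the two cross terms. You also correctly identify that the only real content is the convention linking ``anticlockwise'' to positive shoelace area. Your normal-form check confirms that convention independently: with $u=(a,0)$ you get $\det(v,u)=-a v_2$, and the triangle is anticlockwise iff $v_2<0$.

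Pinning this down is worthwhile here. The proof of Theorem~\ref{localMinTest} applies the lemma with $X=x_i$, $Y=c$, $Z=x_{i-1}$, and your computation confirms that the sign convention used there is the right one.
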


Now we use Lemma \ref{triangle} to find the sign of the $0$-critical points.

\begin{theorem}\label{localMinTest}
Let $M\subseteq\mathbb{R}^2$ be a compact set whose boundary is a disjoint union of piecewise-linear closed curves. Let $\gamma$ be a piecewise-linear curve of $\partial M$ with vertices $x_0,x_1,\cdots,x_m=x_0$ traversed counterclockwise with respect to $M$.  
Fix a point $c\in\mathbb{R}^2$ and consider the radial function $\rho_c$ centered at $c$. 
A 0-critical point of $\gamma$, $x_i$ is a $(+)$-critical point for $\rho_c$ if and only if $\det(x_{i-1}-c,x_i-c)<0.$
\end{theorem}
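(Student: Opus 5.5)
Since $\gamma$ is traversed counterclockwise with respect to $M$, the interior of $M$ lies locally to the left of each oriented edge. At a $0$-critical vertex $x_i$ the two neighbours $x_{i-1},x_{i+1}$ are both strictly farther from $c$ than $x_i$. The plan is to show that whether $M$ lies locally outside or inside the disc $\overline{B(c,\rho_c(x_i))}$ near $x_i$ is decided by which side of the ray from $c$ through $x_i$ the incoming vertex $x_{i-1}$ lies on. That side is exactly the sign of $\det(x_{i-1}-c,x_i-c)$.

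\textbf{Step 1: local picture.} Translate so that $c=0$ and rotate so that $x_i=(0,a)$ with $a=\rho_c(x_i)>0$ (if $a=0$ then $c=x_i$ lies on $\partial M$, which the perturbation of $c$ excludes). Rotation preserves determinants, so $\det(x_{i-1}-c,x_i-c)=\det(x_{i-1},x_i)=a\,u_1$, where $x_{i-1}=(u_1,u_2)$. This is negative if and only if $u_1>0$. Hence the claim reduces to: $x_i$ is a $(+)$-critical point (local minimum on $M$) if and only if the incoming vertex lies to the right of the vertical line through $x_i$.

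\textbf{Step 2: positions of the neighbours.} Since $\rho_c(x_{i\pm1})>a$, each neighbour lies outside the circle of radius $a$. Since each edge is a straight segment starting at $x_i$, near $x_i$ the segment must lie on or above the tangent line $y=a$. A segment from $x_i$ into the region $y<a$ would enter the open disc and decrease $\rho_c$. Because $c$ is generic (non-integer, non-half-integer, $\mathrm{frac}(c_x)\neq\pm\mathrm{frac}(c_y)$), neither edge is tangent, so both edge directions point strictly into $\{y>a\}$. The two edges $x_ix_{i-1}$ and $x_ix_{i+1}$ then bound a wedge $W$ contained in the open upper half-plane near $x_i$. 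The neighbours must lie on opposite sides of the vertical line $x=0$; otherwise $x_i$ would be a reflex-type vertex. This case needs checking and is handled below.

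\textbf{Step 3: orientation decides the side.} Traversing $x_{i-1}\to x_i\to x_{i+1}$ with $M$ on the left, $M$ locally equals either the wedge $W$ or its complement. Suppose $u_1>0$, so we come down from the right and go up to the left. The left side of this path is the upper region $W$. Since $W\subset\{y\ge a\}$ lies outside the disc near $x_i$, $\rho_c\ge a$ on $M$ near $x_i$, and $x_i$ is a local minimum. Conversely, if $u_1<0$, the left side is the complement of $W$, which contains points below the tangent line, so $x_i$ is not a local minimum. By Lemma~\ref{triangle}, the orientation of the triangle $c\,x_{i-1}x_i$ is what encodes this left/right condition. Finally, if $u_1$ and $x_{i+1}$ have the same sign, the wedge $W$ is thin and still lies above the tangent, and the same left-side analysis applies with $W$ replaced by the thin wedge. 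The decisive quantity is again whether we turn so that the upper wedge is on the left, which is equivalent to the sign of $u_1$ relative to the other edge. I expect this to be the main obstacle. To handle it I would compare angles measured from the direction $x_i-c$ and check that $\det(x_{i-1}-c,x_i-c)<0$ is equivalent to the counterclockwise turn at $x_i$ enclosing the upper wedge. Degenerate collinear cases are excluded by genericity of $c$.
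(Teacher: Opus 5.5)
Your reduction (normalise to $c=0$, $x_i=(0,a)$ and decide which of the two sectors at $x_i$ is $M$) is a legitimate rephrasing of the paper's test. The paper instead asks on which side of the line $x_{i-1}x_i$ the centre lies relative to $M$, and then applies Lemma~\ref{triangle} to the triangle $x_i\,c\,x_{i-1}$. As written, however, your argument contains two sign errors that happen to cancel. In Step~1, $\det(x_{i-1}-c,x_i-c)=a\,u_1$ is negative iff $u_1<0$, not $u_1>0$. In Step~3, if you come down from the right ($u_1>0$) and go up to the left, the region on your left is the complement of $W$, not $W$. Compare the counterclockwise diamond $(0,-1)\to(1,0)\to(0,1)\to(-1,0)$: its bottom vertex has the upper wedge as interior and is entered from the upper left. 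The correct chain, when $x_{i-1}$ and $x_{i+1}$ lie on opposite sides of the line through $c$ and $x_i$, is: $x_i$ is $(+)$-critical $\iff$ $M$ is locally $W$ $\iff$ $u_1<0$ $\iff$ $\det(x_{i-1}-c,x_i-c)<0$.

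The case you flag as the main obstacle cannot be handled as you propose, because the stated equivalence is false there. Take $c=0$, $x_i=(0,1)$, $x_{i-1}=x_i+(\cos 100^\circ,\sin 100^\circ)$ and $x_{i+1}=x_i+(\cos 120^\circ,\sin 120^\circ)$. Then $x_i$ is $0$-critical and $\det(x_{i-1}-c,x_i-c)=\cos 100^\circ<0$. Yet the path turns right at $x_i$, so $M$ is locally the reflex complement of the $20^\circ$ wedge, contains $(0,1-\delta)$, and $x_i$ is not a local minimum. The mirror image, $x_{i-1}=x_i+(\cos 80^\circ,\sin 80^\circ)$ and $x_{i+1}=x_i+(\cos 60^\circ,\sin 60^\circ)$, is a genuine local minimum with $\det>0$.

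When both edges leave the disc, $(+)$-criticality is exactly the left-turn condition $\det(x_i-x_{i-1},x_{i+1}-x_i)>0$. This agrees with the sign test only when $x_{i-1}$ and $x_{i+1}$ straddle the line $cx_i$, so that configuration must be assumed rather than proved. The paper's proof also uses it tacitly: its step from \emph{the segment $x_ic$ avoids $M$} to \emph{the triangle $x_i\,c\,x_{i-1}$ avoids $M$ near $x_i$} fails for a thin convex wedge on the same side of the line $x_{i-1}x_i$ as $c$. Straddling does hold for the pixel curves of Section~\ref{consider}: consecutive edges there meet at an angle of at least $90^\circ$, so two edges leaving the disc cannot lie on the same side of the line $cx_i$.

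Separately, your Step~2 claim that both edges stay above the tangent line does not follow from $\rho_c(x_{i\pm1})>\rho_c(x_i)$. For example, the segment from $(0,1)$ to $(-3,0.9)$ enters the unit disc, and genericity of $c$ only excludes equal vertex distances. That the edges leave the disc at $x_i$ is therefore a further hypothesis; this is the phenomenon behind $\hat{\rho}_c$ in Section~\ref{practical}.
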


\begin{proof}
Let $x_i$ be a 0-critical point. Consider the triangle $\Delta x_icx_{i-1}$. Then every point $p$ on the interior of the edge $x_ic$ satisfies $\rho_c(p)<\rho_c(x_i)$.

\begin{figure}[h]
    \centering
    \begin{tikzpicture}
    \node[circle, fill = black, inner sep=0pt,minimum size=4pt] (c1) at (0,0) {};
    \node (c1b) at (0,-0.2) {$c$};
    \draw[fill = gray!20, gray!20] (0,1.5) -- (0.5,1) -- (1.414,1) -- (1.6,1.5) -- (0.9,1.9)--(0.2,2) -- (0,1.5);
    \draw (0,1.5) -- (0.5,1) -- (1.414,1);
    \node[circle, fill = black, inner sep=0pt,minimum size=4pt] (xi-1) at (0,1.5) {};
    \node at (-0.2,1.3) {$x_{i-1}$};
    \node[circle, fill = black, inner sep=0pt,minimum size=4pt] (xi) at (0.5,1) {};
    \node at (0.5,0.7) {$x_{i}$};
    \node[circle, fill = black, inner sep=0pt,minimum size=4pt] at (1.414,1) {};
    \node at (1.414,0.7) {$x_{i+1}$};
    \draw[->] (xi-1) -- (0.25,1.25);
    \draw[->] (xi) -- (0.957,1);
    \node at (0.7,1.4) {$M$};
    \node[circle, fill = black, inner sep=0pt,minimum size=4pt] (c1) at (5,0) {};
    \node (c1b) at (5,-0.2) {$c$};
    \draw[fill = gray!20, gray!20] (5,1.5) -- (5.5,1) -- (6.414,1) -- (6.2,0.2) -- (5.45,0.2) -- (4.7,0.5) -- (5,1.5);
    \node at (5.4,0.6) {$M$};
    \draw (5,1.5) -- (5.5,1) -- (6.414,1);
    \node[circle, fill = black, inner sep=0pt,minimum size=4pt] at (5,1.5) {};
    \node at (5.2,1.8) {$x_{i+1}$};
    \node[circle, fill = black, inner sep=0pt,minimum size=4pt] (xi2) at (5.5,1) {};
    \node at (5.6,1.3) {$x_i$};
    \node[circle, fill = black, inner sep=0pt,minimum size=4pt] (xii) at (6.414,1) {};
    \node at (6.5,1.3) {$x_{i-1}$};
    \draw[->] (xii) -- (5.957,1);
    \draw[->] (xi2) -- (5.25,1.25);
    \end{tikzpicture}
    \caption{An illustration of the two cases discussed in the proof: the first case (top left) where $x_i$ is a $(+)$-critical point and the second case (top right) where $x_i$ is not. The shaded area indicates a small portion of $M$.}
    \label{criticalPoint}
\end{figure}

Suppose $x_i$ is a $(+)$-critical point for $\rho_c$. Then there exists some $\epsilon>0$ such that for all $a\in B(x_i,\epsilon)\cap M$, we have $\rho_c(a)\geq \rho_c(x_i)$. It follows that $p\not\in B(x_i,\epsilon)\cap M$ for all $p$ on $x_ic$. Hence, $\Delta x_icx_{i-1}$ does not contain a subset of $B(x_i,\epsilon)\cap M$, which means $c$ is on the opposite side of $x_ix_{i-1}$ compared to $M$. Since $\gamma$ is traversed in a counterclockwise order, we must have $x_i,c,x_{i-1}$ in a clockwise order. Therefore, by Lemma \ref{triangle}, we have $\det(x_{i-1}-c,x_i-c)<0$ as required.

Conversely, suppose $x_i$ is not a $(+)$-critical point for $\rho_c$. Then the triangle $\Delta x_icx_{i-1}$ contains a subset of $B(x_i,\epsilon)\cap M$, i.e. $c$ is on the same side of $x_ix_{i-1}$ as $M$. Again, since $\gamma$ is traversed counterclockwise, we must now have $x_i,c,x_{i-1}$ in a counterclockwise order, which by Lemma \ref{triangle}, we know that $\det(x_{i-1}-c,x_i-c)>0$.
\end{proof}

\subsection{Practical considerations}
\label{practical}

In practice, for a given binary image, our algorithm represents the boundary of the foreground by a list of points traced counterclockwise with respect to the foreground, as depicted by the orange dots and line segments in Figure \ref{midpts}. 
For a radial filtration, there will be places where the true sublevel set filtration includes part of an edge before its endpoints; see the blue edges in Figure~\ref{midpts} for example. 
Note that for any choice of center $c$, however, there will be values of $r$ for which the circle of radius $r$ about $c$ has a point of tangency with an edge of $\partial M$. 
In practice, we build the sublevel set filtration in a simplex-wise manner and include an edge $x_{i-1} x_i$ only when both $\rho_c(x_{i-1}) \leq r$ and $\rho_c(x_{i}) \leq r$.
 
Stability results justify this algorithmic choice. The error from the approximation is bounded by the pixel spacing.  
To see this, let $\rho_c$ denote the true radial function on the piecewise linear boundary curve $\partial M$ and $\widehat{\rho_c}$ the function induced on $\partial M$ by the simplex-wise filtration that assigns the larger vertex value to the whole edge. 
For each $s>0$, if $y\in (\partial M)_s$, then $\rho_c(y)=\lVert y- c\rVert_2\le s$. 
Let  $x_{i-1}$ and $x_i$ be the endpoints of the edge containing $y$. 
Then $\hat{\rho}_c(y)=\max\{\rho_c(x_{i-1}),\rho_c(x_i)\}.$ Since $\max\{\lVert y-x_{i-1}\rVert_2,\lVert y-x_i\rVert_2\}\le 1$ (assuming integer-spaced pixels) we have $$\hat{\rho}_c(y)=\max\{\lVert x_i-c\rVert_2, \lVert x_{i-1}-c\rVert_2\}\le \max\{\lVert y-x_{i-1}\rVert_2,\lVert y-x_i\rVert_2\}+\lVert y-c\rVert_2\le \rho_c(y)+1.$$ Therefore, we have $\lvert\hat{\rho}_c(y)-\rho_c(y)\rvert\le 1$, and the extended persistence modules  
obtained from $\rho_c$ and $\hat{\rho}_c$ are $1$-interleaved, leading to a bounded bottleneck distance. 
Moreover this approximation error is the same as the discretization of the pixel array, the limiting accuracy in feature detection from the image.    

Finally, the theory developed in Section \ref{sec:proof} assumes that $\rho_c$ is Morse and that all critical values are distinct. This may not be the case for digital images. For example, if the center $c$ is chosen to be the center of a pixel, then $\rho_c$ can take the same value on many boundary vertices due to their regular spacing. 
However, the set of centers $c$ such that $\rho_c$ is \emph{not} Morse for a given image $M$ has measure 0.
That is, for all $c$, there is a generic point $c'$ close enough to $c$ such that the signs of critical points on the boundary of the image do not change, and that $\rho_{c'}$ is Morse. Stability results (e.g., in \cite{C2016}) then tell us that we can obtain the XRPH$(M,c)$ from that of XRPH$(M,c')$.

\section{Future directions}

In this paper, we proposed a novel efficient computing of extended radial persistence matching for a manifold with boundary by examining its relationship with the extended persistence matching for the boundary of the given manifold. We also provided implementation in R that computes the extended radial persistence matching for image data with minimum critical value achieved on the exterior boundary of the foreground. In this case, the cascade function gives a simple death cascade sequence for each connected component of the image. Note that this covers most practical scenarios. For instance, if the image of interest has no hole, then any choice of centre will automatically lead to a radial function satisfying the assumption $r_{\min}=r_{\min}^{\mathrm{ext}}$. Similarly, if the centre is chosen in the infinite component of the foreground of the image, then the corresponding radial function will also satisfy the assumption. That said, it would still be a useful exercise to have code implementation of the cascade algorithm so we can compute extended radial persistence matching of the foreground for all choices of centre for the radial filtration. 

In Section \ref{practical}, we saw that the current algorithmic implementation computes the XRPH of the foreground of an image using the approximate radial function $\hat{\rho}_c$ on its boundary. However, since the foreground boundary is piecewise linear, the true radial function on the boundary cell complex will almost always be Morse. The critical points will be where the circle centred at $c$ first becomes tangent to the boundary lines. The corresponding radii are the critical values. So if we add in tangent points of the circles centred at $c$ to the boundary curve as vertices and record their distance to the centre $c$, we can compute the true XRPH of the boundary cell complex. The justification we gave in Section \ref{practical} demonstrated that our algorithm computing the XRPH of the radial filtration using the approximate radial function $\hat{\rho}_c$ still gives the charaterisation of the XRPH for the true radial filtration computed using $\rho_c$. Nevertheless, it might be of interest to implement the alternative method to compute the XRPH for the actual boundary cell complex. 

Another direction is to explore whether a nice correspondence between regular persistence and extended persistence exists for other types of filtration. In particular, it would be interesting to see whether the method of proof exploited here could be extended to alternative settings. 

Finally, we look forward to seeing this theoretical advancement being used in real-life applications.

\section{Appendix}
\subsection{R code}
We provide R codes for implementing our algorithm on binary images for the case where $r_{\min}=r_{\min}^{\mathrm{ext}}$. The location of $c$ can be either inside $M$ or outside $M$. Assumptions made for interpreting binary images are included in Section \ref{consider}. The code can be found at \url{https://github.com/JencyJ12/XRPH.git}.

\subsection{Pseudocode}\label{code}
We include below pseudocode for how we implement the results from Section \ref{sec:proof} in the case of binary images. So in what follows, $M$ will be the foreground of some binary image. It can be regarded as a manifold with boundary embedded in $\R^2$. 

Following the workflow in Figure \ref{JM}, we would start by extracting the boundary $\partial M$ of $M$. We use a standard square tracing algorithm for this, except in our case we also want to consider the orientation of the curve to differentiate between interior and exterior boundaries. In particular, we will record the numbers of exterior and interior boundaries $(m,n)$, the coordinates of exterior boundaries and interior boundaries in the output list {\bf bdryCurve}. 

As we did in Section \ref{sec:proof}, we can break down the foreground into disjoint connected components, and then the extended persistence matching of the foreground of some given input binary image is the direct some of that for each connected components of the foreground. Hence, in the following algorithms, we will describe what we do to each connected components of the foreground before combining them into a matching for the complete binary image. In particular, that means that the numbers of exterior and interior boundaries for each connected componnet will be of the form $(1,n)$ instead.

While four different cases (whether the centre is chosen inside $M$ and whether $r_{\min}=r_{\min}^{\mathrm{ext}}$) for computing the extended radial  persistence modules are considered depending on the position of the centre point relative to the foreground manifold that we are interested in, most parts of both algorithms are the same -- it is only during the translation from extended persistence of the boundary to extended persistence of the foreground that they start to differ. In light of this, we will break down the algorithm into small code blocks so that it is clearer to both the author and audience of this paper the structure of the algorithm.
We start by translating the boundary components we obtained from tracing the image foreground into a structure recording the vertices, edges and coordinates of the boundaries of each connected components as in Algorithm \ref{alg:parseSkeleton}. We assume that exterior and interior boundary components (\textbf{extBndry} and \textbf{intBndry}) are $n_i\times 2$ matrices with rows containing coordinates of pixel midpoints on the corresponding boundary in order so that adjacent rows corresponds to neighboring vertices on the boundary. This translates the given boundary curves into a 1-skeleton or equivalently, gives a complete description of the 2-skeleton of the corresponding connected component. Since it is possible that multiple vertices on the same connected component are of the same distance away from the centre, such a labeling also allows us to use the one with lower index to be observed first scanning outwards for consistency and symmetry breaking. 

\begin{algorithm}[H]
\caption{\textbf{parseSkeleton} Translate given boundary curves into skeleton structure.}\label{alg:parseSkeleton}
\begin{algorithmic}
\State \textbf{Input:} $\boldsymbol{n}$, the number of interior boundary components;

    \;\;\;\;\;{\bf extBndry}, a matrix whose rows are coordinates of the vertices on the  exterior boundary of the given connected component;
    
    \;\;\;\;\;{\bf intBndry$[i]$}, a matrix whose rows are coordinates of the vertices on the $(i-1)$-th interior boundary components for $2\leq i\leq n+1$.

\State \textbf{Output: {\bf skeleton}}, a list of length $n+1$ with the following structure ($i=1$ corresponds to the exterior boundary component): 
\begin{itemize}
    \item {\bf skeleton$[[i]]$.vertex}, a vector numbering the $n_i$ vertices of the corresponding boundary component;
    \item {\bf skeleton$[[i]]$.edges}, a sequence of vectors of the form $(j,j+1)$ (mod $n_i$) recording the vertices of ordered edges;
    \item {\bf skeleton$[[i]]$.coords}, a sequence of coordinates on the $i$-th boundary curve. 
\end{itemize}
\algstore{bkbreak} \end{algorithmic} \end{algorithm}

\begin{algorithm}[H]
\begin{algorithmic}
\algrestore{bkbreak}
\For{\textbf{all }$i\in \{1,2\cdots,n+1\}$}
    \If{$i=1$}
        \State curve$_i \gets $ \textbf{extBndry} \Comment{Note that {\bf skeleton[[1]]} corresponds to the exterior boundary component.}
    \Else
        \State curve$_i \gets $ \textbf{intBndry}$[i-1]$
    \EndIf
    
    \If{the first row and the final row of curve$_i$ coincides} 
    
    \Comment{Check if the $i$-th curve is a loop}
    \State np$_i$ $\gets $ number of rows in curve$_i$ $- 1$
    \Else
    \State np$_i$ $\gets $ number of rows in curve$_i$
    \EndIf
    
    \State $\textbf{skeleton$[[i]]$.vertex} \gets 1:\;np_i$
    
    \For{\textbf{all} $j\in\{1,2,\cdots,\text{np}_{i}-1\}$}
        \State \textbf{skeleton$[[i]]$.edges}.add($(j,j+1)$)
        \State \textbf{skeleton$[[i]]$.coords}.add(coordinates of the $j$-th vertex on curve$_i$)
    \EndFor
    \State \textbf{skeleton$[[i]]$.edges}.add($(\text{np$_i$},1)$)
    \State \textbf{skeleton$[[i]]$.coords}.add(coordinates of the final vertex on curve$_i$)
\EndFor
\end{algorithmic}
\end{algorithm}

The next step is to compute the radial filtration of a connected component of the foreground given some centre point, in such a way that allows us to compute the XRPH of the boundary. To that end, we will record for each vertex whether its neighbors are closer to the centre (so the filtration sees the neighbor(s) before it sees the vertex). We demonstrate how we achieve it \emph{for a single boundary component} in Algorithm \ref{alg:computeRadialFiltration}. 

\begin{algorithm}[H]
\caption{\textbf{computeRadialFiltration.} Compute radial filtration for the given skeleton for some fixed centre point.}\label{alg:computeRadialFiltration}
\begin{algorithmic}
\State \textbf{Inputs: curve}, an object taking the same form of \textbf{skeleton$[[i]]$} from Algorithm \ref{alg:parseSkeleton};

\;\;\;\;\;\;\;\,\textbf{centre}, a coordinate vector of the centre point.
\State \textbf{Output: filtration}, a list of length 3 with the following structure:
\begin{itemize}
    \item \textbf{filtration.radius}, a vector of distances from each vertex on the curve to the centre
    \item \textbf{filtration.lowerNbrs}, a list for each vertex recording its neighbours closer to the centre
    \item \textbf{filtration.coords}, a sequence of coordinates of vertices on the curve
\end{itemize}

\vspace{0.2cm}
\State \textbf{filtration.coords} $\gets$ \textbf{curve.coords}
\For{\textbf{each} \textbf{vertex} in \textbf{curve.vertex}}
\State \textbf{filtration.radius}.add(distance between \textbf{vertex} and \textbf{centre})
\EndFor
\algstore{bkbreak} \end{algorithmic} \end{algorithm}

\begin{algorithm}[H]
\begin{algorithmic}
\algrestore{bkbreak}
\For{\textbf{all} i in $\{1,2,\cdots,\text{length(\textbf{curve.vertex})}\}$}
\State e$_i \gets$ \textbf{curve.edges}$[i]$ \Comment{Edge of \textbf{curve} that starts at $i$-th vertex}
\State $r_i \gets$ \textbf{filtration.radius}$[e_i[1]] $\Comment{Radius of the $i$-th vertex}
\State $r_{i+1} \gets$ \textbf{filtration.radius}$[e_i[2]]$\Comment{Radius of the next vertex on edge $e_i$}
\If{$r_i<r_{i+1}$}
    \State \textbf{filtration.lowerNbrs}$[[e_i[2]]]$.add($e_i[1]$)
\ElsIf{$r_i=r_{i+1}$}
    \If{$e_i[1]<e_i[2]$} \Comment{The one with lower index is a lower neighbour of the one with higher index}
    \State \textbf{filtration.lowerNbrs}$[[e_i[2]]]$.add$(e_i[1])$
    \Else
    \State \textbf{filtration.lowerNbrs}$[[e_i[1]]]$.add($e_i[2]$)
    \EndIf
\Else
    \State \textbf{filtration.lowerNbrs}$[[e_i[1]]]$.add($e_i[2]$)
\EndIf
\EndFor
\end{algorithmic}
\end{algorithm}

With Algorithm \ref{alg:computeRadialFiltration}, we are now able to compute XRPH \emph{for a single boundary component}. Here we also introduce a threshold parameter (\textbf{tolerance} in Algorithm \ref{alg:computeRPH}) that allows us to ignore potential noise in the foreground. Moreover, on each boundary component, the only essential class has interval representation $\mathcal{I}_{[r,R)}$, where $r$ and $R$ are the smallest and largest distances from the centre achieved on this boundary component respectively.

\begin{algorithm}[H]
\caption{\textbf{computeBndryXRPH.} Compute XRPH of the boundary of a connected component}\label{alg:computeRPH}
\begin{algorithmic}
\State {\bf Input: filtration}, output from Algorithm \ref{alg:computeRadialFiltration};
\State \;\;\;\;\;\;\;\;\;\;\;\; \textbf{tolerance}, numeric parameter that controls the noise from the input, i.e. if the distance between two vertices is less than this value, then any class generated by them will be ignored;
\State \;\;\;\;\;\;\;\;\;\;\;\; \textbf{centre}, a coordinate vector of the centre point.
\State {\bf Output: finite}, components of the boundary with finite survival time, i.e. corresponding to Ord and Rel classes;
\State \;\;\;\;\;\;\;\;\;\;\;\;\;\;\;\; \textbf{essential}, components of the boundary with infinite survival time, i.e. corresponding to Ess classes.
\vspace{0.2cm}
\State {\bf distList} $\gets$ sorted distinct values in {\bf filtration.radius} from small to large
\If{tail({\bf distList})$-$head({\bf distList}$)>$\textbf{tolerance}}\Comment{Check if the component is considered noise}
    \State \textbf{essential} $\gets$ (head({\bf distList}),tail({\bf distList}))
\EndIf
\algstore{bkbreak} \end{algorithmic} \end{algorithm}

\begin{algorithm}[H]
\begin{algorithmic}
\algrestore{bkbreak}
\For{$r$ in {\bf distList}} 
    \State $v_r$ $\gets$ vertices with radius $r$
    \For{$v$ in $v_r$}
        \If{$v$ has no lower neighbour in \textbf{filtration.lowerNbrs$[[v]]$}}
            \State $p_v\gets v$
        \Else
            \State \textbf{component}$\gets$ unique roots of paths to lower neighbours of $v$
        
            \If{\textbf{component} has exactly one element} 
                \State $p_v\gets $ \textbf{component}
            \Else
                \State \textbf{bTimes} $\gets$ distances from entries in \textbf{component} to \textbf{centre}
                \State \textbf{minBTime} $\gets \min(\textbf{bTimes})$ 
                \State reorder \textbf{component} from indices small to large

                \For{$x$ in \textbf{component}}
                    \State $r_x\gets$ \textbf{filtration.radius}$[[x]]$
                    \If{$\textbf{minBTime}<r_x<r$ and $r-r_x>\textbf{tolerance}$}
                        \State \textbf{finite}.add($(r_x,r)$) 
                    \ElsIf{$r_x=\textbf{minBTime}$ and $p_v$ has not been assigned}
                        \State \textbf{newComponent}$\gets x$
                        \State $p_v\gets \textbf{newComponent}$
                    \ElsIf{$\textbf{minBTime}=r_x < r$ and $r-r_x>\textbf{tolerance}$}
                        \State \textbf{finite}.add($(r_x,r)$)
                    \EndIf
                \EndFor
                \For{x in \textbf{component}}
                    \State $p_x\gets\textbf{newComponent}$\Comment{All components found are part of the same connected component.}
                \EndFor
            \EndIf
        \EndIf
    \EndFor
\EndFor
\end{algorithmic}
\end{algorithm}

Another step in mapping the XRPH of $\partial M$ to the XRPH of $M$ is to determine the sign of each critical point. We include pseudocode for computing the sign of a critical point based on Theorem \ref{localMinTest}.

\begin{algorithm}[H]
    \caption{\textbf{computeSign.}}\label{alg:testMin}
    \begin{algorithmic}
        \State \textbf{Input: $\boldsymbol{x}$}, $\boldsymbol{x}[1]$ =  vertex traced immediately before current critical point, 
        \State\;\;\;\;\;\;\;\;\;\;\;\;\;\;\;\;\;\;\;\;\;\;\;\;\;\;\;\;$\boldsymbol{x}[2]$ = current critical point;
        \State\;\;\;\;\;\;\;\;\;\;\;\; \textbf{c}, coordinate vector of the centre point;
        \State \textbf{Output: isPositive}, boolean, 1 = (+)-critical, 0 = $(-)$-critical

    \State $\boldsymbol{M}\gets \begin{bmatrix}
        \vert &  \vert\\
        \boldsymbol{x}[1]-\textbf{c} & \boldsymbol{x}[2]-\textbf{c}\\
        \vert &  \vert
    \end{bmatrix}$
    \State \textbf{isPositive} $\gets \det(\boldsymbol{M}) < 0$
    \end{algorithmic}
\end{algorithm}

We are now ready to translate XRPH of $\partial M$ to XRPH of each connected component of $M$. We will emphasize the use of Algorithm \ref{alg:testMin} with italics. Note that Algorithm \ref{alg:RXPH1} assumes that $M$ is connected. 

\begin{algorithm}[H]
\caption{\textbf{computeXRPH.} 
}\label{alg:RXPH1}
\begin{algorithmic} 
\State \textbf{Input: bdryCurve}, boundary components of a foreground connected component;
\State\;\;\;\;\;\;\;\;\;\;\;\; \textbf{centre}, coordinate vector of the centre point;
\State\;\;\;\;\;\;\;\;\;\;\;\; \textbf{tolerance}, numeric parameter that controls the noise from the input;
\algstore{bkbreak} \end{algorithmic} \end{algorithm}

\begin{algorithm}[H]
\begin{algorithmic}
\algrestore{bkbreak}
\State \textbf{Output: Ord0}, intervals in the Ordinary class of dimension 0;
\State\;\;\;\;\;\;\;\;\;\;\;\;\;\;\; \textbf{Rel1}, intervals in the Relative class of dimension 1;
\State\;\;\;\;\;\;\;\;\;\;\;\;\;\;\; \textbf{Ess0}, intervals in the Essential class of dimension 0;
\State\;\;\;\;\;\;\;\;\;\;\;\;\;\;\; \textbf{Ess1}, intervals in the Essential class of dimension 1.
\vspace{0.2cm}
\State \textbf{skeleton} $\gets$ \textbf{parseSkeleton}(\textbf{bdryCurve})

\For{each \textbf{curve} in \textbf{skeleton}}\Comment{Compute the RPH of the boundary of the connected component}
    \State \textbf{filtration} $\gets$ \textbf{computeRadialFiltration}(\textbf{curve},\textbf{centre})
    \State [\textbf{finite},\textbf{essential}].add(\textbf{computeBndryXRPH}(\textbf{filtration},\textbf{tolerance},\textbf{centre})
\EndFor
\For{\textbf{element} in \textbf{finite}}
    \If{birth location of \textbf{element} has a \textit{positive sign}}
        \State \textbf{Ord0}.add(\textbf{element})
    \Else
        \State \textbf{Rel1}.add(\textbf{element})
    \EndIf
\EndFor

\State $r_{\min}\gets$ minimum birth value in \textbf{essential}
\State $r_{\max}\gets$ maximum death value in \textbf{essential} \Comment{achieved on the exterior boundary component}

\If{vertex achieving $r_{\min}$ has a \textit{negative sign}} \Comment{\textbf{centre} is inside $M$} 
    \State \textbf{Ess0}.add((0,$r_{\max}))$
    \State \textbf{Rel2}.add(($r_{\min},0$))
\EndIf

\If{vertex achieving $r_{\min}$ has a \textit{positive sign}}\Comment{we are in the case $r_{\min} = r_{\min}^\mathrm{ext}$}
\For{\textbf{element} in \textbf{essential}}
    \If{birth location of \textbf{element} has a \textit{negative sign}}
        \State \textbf{Ess1}.add(\textbf{element})
    \EndIf
\EndFor
\Else \Comment{we have $r_{\min} \ne r_{\min}^\mathrm{ext}$} 
\State Needs the cascading algorithm
\EndIf
\end{algorithmic}
\end{algorithm}

\end{document}